\documentclass{amsart}

\usepackage{url}
\usepackage[a4paper, total={5.5in, 8in}]{geometry}
\usepackage{lscape}
\usepackage{mathtools}
\usepackage[colorlinks=true,linkcolor=red,citecolor=blue]{hyperref}
\usepackage{color}
\usepackage[utf8]{inputenc}
\usepackage[color,matrix,arrow, curve]{xy}
\usepackage{amsmath, amsthm, amsfonts, amssymb}
\usepackage{mathrsfs}
\usepackage{tikz}
\usetikzlibrary{matrix,arrows}


\DeclareMathOperator{\Hom}{Hom}

\DeclareMathOperator{\map}{Map}

\renewcommand{\lim}{\mathrm{lim}}


\numberwithin{equation}{section}
\newtheorem{thm}{Theorem}[section]
\newtheorem*{thma}{Theorem A}
\newtheorem*{thmb}{Theorem B}

\newtheorem*{thmd}{Theorem D}
\newtheorem*{corc}{Corollary C}
\newtheorem{cor}[thm]{Corollary}
\newtheorem{lem}[thm]{Lemma}

\newtheorem{prop}[thm]{Proposition}

\theoremstyle{definition}
\newtheorem{defn}[thm]{Definition}

\newtheorem{rem}[thm]{Remark}



\newcommand{\id}{\textup{id}}

\newcommand{\holim}{\textup{holim}}

\newcommand{\hofiber}{\textup{hofiber}}

\newcommand{\emb}{\textup{emb}}


\newcommand{\imm}{\textup{imm}}

\newcommand{\sO}{\mathcal O}

\newcommand{\op}{\textup{op}}

\newcommand{\FF}{\mathbb F}
\newcommand{\RR}{\mathbb R}
\newcommand{\ZZ}{\mathbb Z}

\newcommand{\QQ}{\mathbb Q}

\newcommand{\holimsub}[1]{\begin{array}[t]{cc} \textup{holim} \\ [-1mm]
\scriptstyle{#1} \end{array}}

\newcommand{\uli}{\underline}

\newcommand{\cat}{\mathbf}




\begin{document}

\title{Galois symmetries of knot spaces}
\author[Pedro Boavida]{Pedro Boavida de Brito}
\email{pedrobbrito@tecnico.ulisboa.pt}
\address{Dept. of Mathematics, IST, Univ. of Lisbon, Av. Rovisco Pais, Lisboa, Portugal}%

\author{Geoffroy Horel}
\email{horel@math.univ-paris13.fr}
\address{Universit\'e Sorbonne Paris Nord, LAGA, CNRS, UMR 7539, F-93430, Villetaneuse, France \newline \'Ecole normale sup\'erieure, DMA, CNRS, UMR 8553, 45 rue d'Ulm, 75230 Paris Cedex 05, France}

\subjclass[2020]{57R40, 57K16, 55Q99}
\keywords{Finite type invariants, manifold calculus, spaces of knots, little disks operad}
\begin{abstract}
We exploit the Galois symmetries of the little disks operads to show that many differentials in the Goodwillie-Weiss spectral sequences approximating the homology and homotopy of knot spaces vanish at a prime $p$. Combined with recent results on the relationship between embedding calculus and finite-type theory, we deduce that the $(n+1)$-st Goodwillie-Weiss approximation is a $p$-local universal Vassiliev invariant of degree $\leq n$ for every $n \leq p + 1$.
\end{abstract}
\thanks{We gratefully acknowledge the support through: grant SFRH/BPD/99841/2014 and project MAT-PUR/31089/2017, funded by Funda\c c\~{a}o para a Ci\^{e}ncia e Tecnologia; projects ANR-14-CE25-0008 SAT, ANR-16-CE40-0003 ChroK, ANR-18-CE40-0017 PerGAMo, funded by Agence Nationale pour la Recherche.}
\maketitle

\vspace*{15pt}
A long \emph{knot} is a smooth embedding $\RR \to \RR^3$ which coincides with the standard inclusion $t \mapsto (t,0,0)$ in the complement of $[0,1]$. The space of such, with the weak Whitney topology, is denoted $K$. In this paper, we show that the Goodwillie-Weiss approximation to the space of knots gives a $p$-local analog of Kontsevich's integral, in a range which improves as the prime $p$ increases. 

\begin{thma}\label{thm:univ}
Let $p$ be a prime and $n \leq p +1$. The $(n+1)$-st Goodwillie-Weiss approximation of the space of knots
\[K\to T_{n+1}(K)\]
is, on path components, a universal finite type invariant of degree $n$ over $\ZZ_{(p)}$, the localization of $\ZZ$ at the prime $p$.

Furthermore, under the same assumption on $n$, there is a non-canonical isomorphism 
\[\pi_0(T_{n+1}(K)) \otimes \ZZ_{(p)} \cong \oplus_{s\leq n}\mathcal{A}_s^I \otimes \ZZ_{(p)} \]
where $\mathcal{A}_s^I$ is the algebra of indecomposable Feynman diagrams, that is, the quotient of the free abelian group on unitrivalent trees of degree $s$ by the antisymmetry, $IHX$ and $STU^2$ relations.
\end{thma}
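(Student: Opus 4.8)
The plan is to reduce the statement to a degeneration result for the Goodwillie--Weiss spectral sequence computing $\pi_0(T_{n+1}(K))$, and then to combine that degeneration with the comparison between embedding calculus and finite-type theory. I would organize the argument in three stages: first produce a weight grading coming from the Galois symmetries, then use it to kill differentials in the range $n \le p+1$, and finally read off $\pi_0(T_{n+1}(K)) \otimes \ZZ_{(p)}$ and match it with the universal finite-type side.

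First I would recall the model for the Goodwillie--Weiss tower of $K$ in terms of the little disks operads: the layers, and the spectral sequence converging to $\pi_*(T_{n+1}(K))$, are built from the (co)homology of configuration spaces of $\RR^3$, equivalently from the (co)bar construction on $E_1 \to E_3$. The associated graded in total degree zero is, by the standard identification (work of Sinha and of Lambrechts--Turchin--Voli\'c), a graph complex whose homology in weight $s$ is exactly $\mathcal{A}_s^I$, the indecomposable Feynman diagrams of degree $s$. Over $\QQ$ this spectral sequence degenerates by formality of the little disks operad, which already yields the rational statement; the entire difficulty is to push the degeneration into a range $p$-locally, where formality is unavailable.

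The key input, which I would take from the Galois-symmetry results established earlier in the paper, is that the profinite (or $p$-complete) little disks operad carries an action of the absolute Galois group acting through the cyclotomic character. This endows the $p$-local (co)homology of the configuration spaces, and hence each page of the spectral sequence, with a weight grading preserved by every differential. The generators in Feynman-diagram weight $s$ sit in cyclotomic weight proportional to $s$, so a differential $d_r$ can be nonzero only if it connects two classes of equal weight. I would then argue that in the range $n \le p+1$ the weights of classes a differential could a priori pair are forced to differ, so all differentials vanish and the spectral sequence collapses $p$-locally. This produces a (non-canonical) isomorphism $\pi_0(T_{n+1}(K)) \otimes \ZZ_{(p)} \cong \oplus_{s \le n} \mathcal{A}_s^I$, the splitting being the choice of a weight decomposition.

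Finally, for universality I would invoke the recent comparison between the embedding tower and finite-type invariants, which supplies a natural map from the degree-$\le n$ finite-type tower to the Goodwillie--Weiss tower realizing $K \to T_{n+1}(K)$ as a finite-type invariant of degree $n$. The collapse computation shows that $\pi_0(T_{n+1}(K)) \otimes \ZZ_{(p)}$ has exactly the rank of the universal degree-$\le n$ invariant, so this comparison map is a $p$-local isomorphism, and universality follows. The main obstacle is the weight argument itself: lifting the Galois weight grading from the cohomology of configuration spaces, where it is natural, to the homotopy spectral sequence for $\pi_0$ compatibly with the operadic and cosimplicial structure, and pinning down the sharp bound $n \le p+1$. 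The delicate point is that at the prime $p$ the cyclotomic character separates weights only modulo $(p-1)$, so one must verify that no weight collision modulo $(p-1)$ can force a nonzero differential in the relevant range; this is precisely where the numerical constraint on $n$ originates.
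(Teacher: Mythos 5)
Your first two stages do follow essentially the paper's route to collapse (Theorem B, Theorem \ref{thm:htpyBKGW}, Corollary \ref{cor : collapse for T_n}): the $GT_p$-action on $L_pE_d$ makes the homotopy spectral sequence equivariant, the entries carry cyclotomic weights, Proposition \ref{prop: no map} kills differentials between weights that differ by a non-multiple of $p-1$, and the truncation of the tower at stage $n+1$ turns this into collapse along the anti-diagonal exactly for $n\leq p+1$. But two steps you flag as \emph{the main obstacle} and then leave unresolved are genuinely needed. First, the weight grading is only established after rationalization (Hilton--Milnor plus Whitehead products); to apply the weight argument to $E^r_{-s,t}(T(\ZZ_p))$ itself, the paper invokes Serre's theorem that homotopy groups of spheres are $p$-torsion free in a range, so that $E^1_{-s,t}(T(\ZZ_p))$ injects into its rationalization there --- nothing is known about weights of $p$-torsion classes outside that range. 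Second, collapse computes only the associated graded of the filtration of $\pi_0(T_{n+1}(K))\otimes\ZZ_{(p)}$; as this is a finitely generated module, possibly with torsion, a \emph{weight decomposition} does not come for free, and the paper splits the extensions by an $\mathrm{Ext}^1$-vanishing argument over $\ZZ_p[t,t^{-1}]$ (Proposition \ref{prop : trivial extensions}, Theorem C). A minor point: the identification of the diagonal $E^2$-terms of the \emph{homotopy} spectral sequence with $\mathcal{A}^I_{s-1}$ is due to Conant and Shi; the Sinha and Lambrechts--Turchin--Voli\'c identifications you cite concern the homology spectral sequence.

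The genuine gap is your universality step. You deduce that the comparison map is a $p$-local isomorphism from an equality of ranks. That cannot work: rank is a rational invariant and controls neither $p$-torsion in the kernel nor in the cokernel (a map between finitely generated abelian groups of equal rank can even be zero), and you have not established surjectivity of $\overline{e}_{n+1}\otimes\ZZ_{(p)}:(\pi_0(K)/\!\!\sim_n)\otimes\ZZ_{(p)}\to\pi_0(T_{n+1}(K))\otimes\ZZ_{(p)}$ either. Worse, the $p$-local structure of $\pi_0(K)/\!\!\sim_n$ is precisely what is unknown --- only its rationalization is controlled, via Kontsevich's integral --- so a rank comparison yields at best the rational statement and is otherwise circular. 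The paper's proof (Theorem \ref{thm:collapse implies universal}) requires two external inputs absent from your proposal: the Conant--Teichner surjection $R_k:E^2_{-k,k}\to\Phi_{k-1}$, where $\Phi_{k-1}$ is the kernel of $\pi_0(K)/\!\!\sim_{k-1}\to\pi_0(K)/\!\!\sim_{k-2}$ \cite{CTgrope}, and Kosanovi\'c's theorem \cite{Kosanovic} that the composite $E^2_{-k,k}\to\Phi_{k-1}\to E^\infty_{-k,k}$ is the canonical map. Granting collapse, this composite is an isomorphism, hence both factors are isomorphisms, and an induction up the two towers with the five lemma shows that $\overline{e}_{n+1}\otimes\ZZ_{(p)}$ is an isomorphism. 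Without these inputs, collapse of the Goodwillie--Weiss spectral sequence by itself says nothing about the map from the Vassiliev tower.
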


We now describe in more detail the main characters of this story: the \emph{Vassiliev tower} and the \emph{Goodwillie-Weiss tower}. 

\medskip
 A finite-type invariant of degree $n$ is a homomorphism from $\pi_0(K)$ (viewed as a monoid with respect to concatenation) to an abelian group, whose extension to singular knots having $n+1$ double points vanishes. Following Vassiliev, Goussarov and Birman-Lin, the \emph{extension} $\overline{\mu}$ of a knot invariant $\mu$ to singular knots $K$ having at most $k$ double points is defined inductively by the formula: $\overline{\mu}(K) = \overline{\mu}(K_+) - \overline{\mu}(K_-)$ where $K_+$ and $K_-$ are the knots obtained from $K$ by resolving a given double point in the two possible ways. Some standard references are \cite{Vassiliev}, \cite{BarNatan}. 
 
 It is not known whether finite type invariants -- also called Vassiliev invariants -- distinguish all knots, but it is known that they abound. Examples include all so-called quantum invariants, and so all invariants coming from perturbative Chern-Simons theory, as well as many classically studied knot invariants. Two knots are declared to be $n$-equivalent if all finite type invariants of degree $\leq n$ agree on these knots; this defines an equivalence relation $\sim_n$ on $\pi_0(K)$. The equivalence relation $\sim_n$ is finer than $\sim_{n-1}$, and therefore we have a tower of surjections
\begin{equation}\label{Vassiliev}
\pi_0(K)\to\ldots\to\pi_0(K)/\!\!\sim_n\to \pi_0(K)/\!\!\sim_{n-1}\to\ldots\to\pi_0(K)/\!\!\sim_1
\end{equation}
that we call the \emph{Vassiliev tower}. It turns out \cite[Theorem 3.1]{Gusarov} that the concatenation of knots endows each of the sets $\pi_0(K)/\!\!\sim_n$ with the structure of a finitely generated abelian group. There is moreover a surjective map 
\[\mathcal{A}_n^I\to\mathrm{ker}[\pi_0(K)/\!\!\sim_n\to \pi_0(K)/\!\!\sim_{n-1}]\;.\]
A deep and fundamental result in the development of the subject was Kontsevich's construction of the so-called Kontsevich integral which produces a rational inverse of this map and so a (non-canonical) isomorphism
\[(\pi_0(K)/\!\!\sim_n)\otimes\QQ\cong\oplus_{s\leq n}\mathcal{A}_s^I\otimes\QQ \; .\]

\medskip
The \emph{Goodwillie-Weiss tower}, on the other hand, is a tower of topological spaces
\[K\to\dots\to T_n(K)\to T_{n-1}(K)\to\dots\to T_2(K)\]
The map $K \to T_n (K)$ records the value of an embedding $\RR\to\RR^3$ on subsets of $\RR$ given by the complement of at most $n+1$  points of $(0,1)$ and is, in a homotopical sense, universal with respect to this data. One can hit the Goodwillie-Weiss tower with $\pi_0$ and obtain a tower 
\begin{equation}\label{GW}
\pi_0(K)\to\dots\to \pi_0(T_n(K))\to \pi_0(T_{n-1}(K))\to\dots\to \pi_0(T_2(K)) \; .
\end{equation}

It is known that all the sets $\pi_0(T_n(K))$ are finitely generated abelian groups and that all the maps in the tower are compatible with this structure \cite{BCKS,BoavidaWeiss}. There is a spectral sequence that computes $\pi_0(T_n(K))$ (and also its higher homotopy groups): 
\[E^1_{-s,t}\implies \pi_{t-s}T_n(K) \; .\]
The $E^1$-page consists of the homotopy groups of the homotopy fibers of the maps $T_s(K)\to T_{s-1}(K)$ with $s\leq n$ and can be expressed in terms of homotopy groups of spheres. The $d^1$-differential can be described explicitly and we have an isomorphism  
\[E^2_{-s,s}\cong \mathcal{A}_{s-1}^I\;\]
(see \cite{conanthomotopy} or \cite{Shi} for an alternative account).

\medskip
A long-standing conjecture is that the towers (\ref{Vassiliev}) and (\ref{GW}) are isomorphic; see the last paragraph of \cite{GoodwillieWeiss} and \cite[Conjecture 1.1]{BCSS}. Some important steps have been made in this direction. Volić thesis \cite{volic} provided the first indication, though in the homological context. In \cite{BCKS}, Budney-Conant-Koytcheff-Sinha showed that the map $\pi_0(K)\to\pi_0(T_n(K))$ is a degree $(n-1)$ invariant. A new proof of this result, using grope cobordisms, will appear in \cite{KST}. The upshot is that the tower (\ref{Vassiliev}) maps to the tower (\ref{GW}) (with a shift of degree), that is, we have a commutative ladder of abelian groups
\[
\xymatrix{
\ldots\ar[r]&\pi_0(K)/\!\!\sim_n\ar[r]\ar[d]&\pi_0(K)/\!\!\sim_{n-1}\ar[r]\ar[d]&\ldots\ar[r]&\pi_0(K)/\!\!\sim_1\ar[d]\\
\ldots\ar[r]&\pi_0(T_{n+1}(K))\ar[r]&\pi_0(T_n(K))\ar[r]&\ldots\ar[r]&\pi_0(T_2(K)) \; .\\
}
\]

By work of Conant, Kosanović, Teichner and Shi \cite{CTgrope,Kosanovic,KST}, a sufficient condition for the vertical maps to be isomorphisms is that the Goodwillie-Weiss spectral sequence collapses along the anti-diagonal (Theorem \ref{thm:collapse implies universal}). Theorem A is then a consequence of such collapse results. To put it differently, Theorem A says that the kernel of the vertical map $\pi_0(K)/\!\!\sim_n\to\pi_0(T_{n+1}(K))$ can only have $p$-power torsion for $p< n-1$. This result is modest in comparison to the conjecture that the two towers are isomorphic over $\ZZ$. Nevertheless, it seems to be the first non-rational computation about the Goodwillie-Weiss tower beyond low degrees ($n \leq 3$ \cite{BCSS}).

We also note that, after tensoring with $\QQ$, our result gives a construction of an isomorphism
\[(\pi_0(K)/\!\!\sim_{n})\otimes\QQ\cong \oplus_{s \leq n}\mathcal{A}^I_s\otimes\QQ\] 
that is purely homotopical and does not rely on transcendental techniques like Kontsevich's integral. However, our splitting is not explicit, whereas Kontsevich's integral gives a preferred choice of splitting over the complex numbers.

\medskip
Theorem A is a corollary of more general results about the Goodwillie-Weiss tower for knots in $\RR^d$-- namely Theorem B below -- as we now explain. This tower converges to the space of knots when $d \geq 4$, in the sense that the map from $\emb_c(\RR, \RR^d)$ to the homotopy limit of the tower is a weak equivalence. For $d = 3$, the case of classical knots, the tower is known not to converge to the knot space. (For example, $\pi_0 \emb_c(\RR, \RR^3)$ is countable whereas $\pi_0T_\infty \emb_c(\RR, \RR^3)$ is uncountable.)  

The spectral sequence associated to the Goodwillie-Weiss tower has $E^1$-page consisting of the homotopy groups of the layers of the tower, just like in the case $d = 3$. \emph{Rationally}, this spectral sequence is well understood: from the work of Arone, Lambrechts, Voli\'{c} and Turchin  \cite{ALTV} we know that there is rational collapse at the $E^2$-page when $d \geq 4$. Our main technical result gives a vanishing range of differentials for the spectral sequence associated to Goodwillie-Weiss tower (or to a truncation of it), at a prime $p$:
\begin{thmb}
Let $p$ be a prime number. The differential
\[
d^r_{-s,t} : E^{r}_{-s,t} \to E^{r}_{-s-r, t+r-1}
\]
vanishes $p$-locally if $r-1$ is not a multiple of $(p-1)(d-2)$ and
\[
t < 2p - 2 + (s-1)(d-2) \; .
\]
\end{thmb}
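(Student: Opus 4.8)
The plan is to run a weight argument on the $p$-completed homotopy spectral sequence, exploiting the Galois action on the little $d$-disks operad.

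First I would fix a model of the Goodwillie--Weiss tower for $\emb_c(\RR,\RR^d)$ built functorially out of the little disks operad $\mathsf{E}_d$, so that every natural symmetry of $\mathsf{E}_d$ acts on the whole tower and hence on each page $E^r_{s,t}$ of its homotopy spectral sequence, by automorphisms commuting with the differentials $d^r_{s,t}$. After $p$-completion I would invoke the Galois symmetry of the pro-$p$ completion of $\mathsf{E}_d$: through the cyclotomic character $\chi_p\colon \mathrm{Gal}(\overline{\QQ}/\QQ)\to\ZZ_p^{\times}$ one obtains a family of Adams-type operations $\psi^\lambda$, $\lambda\in\ZZ_p^{\times}$, acting naturally and compatibly on all the configuration spaces in play, and therefore on the spectral sequence.

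The heart of the matter is to compute this action. Every class on the $E^1$-page is assembled from the generator of $H^{d-1}$ of $\mathrm{Conf}_2(\RR^d)\simeq S^{d-1}$, which in the Galois picture is a Tate class; normalizing so that $\psi^\lambda$ scales it by $\lambda$, and noting that in the degree conventions of the layers this generator contributes $d-2$ to the internal degree $t$ (the drop from $d-1$ to $d-2$ coming from the single desuspension attached to the one-dimensional source), multiplicativity forces $\psi^\lambda$ to act on $E^r_{s,t}$ through the single character $\lambda\mapsto\lambda^{w}$ with $(d-2)\,w=t$ up to a shift independent of $s$ and $t$. Granting this, equivariance makes $d^r_{s,t}\colon E^r_{s,t}\to E^r_{s-r,t+r-1}$ vanish unless the weights of source and target coincide. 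The differential raises $t$ by $r-1$, hence raises the weight by $(r-1)/(d-2)$; rationally this is nonzero for every $r\geq 2$, which recovers the rational collapse of \cite{ALTV}, whereas on $p$-torsion the character $\lambda\mapsto\lambda^{w}$ depends only on $w$ modulo $p-1$. A nonzero differential therefore forces $(r-1)/(d-2)\equiv 0 \pmod{p-1}$, that is, $r-1$ a multiple of $(p-1)(d-2)$; contrapositively, outside this congruence $d^r_{s,t}$ vanishes $p$-locally. (When $d=2$ the coefficient $(p-1)(d-2)$ vanishes and the asserted collapse for all $r\geq2$ instead reflects the integral formality of $\mathsf{E}_2$.)

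The remaining and genuinely delicate ingredient is the hypothesis $t<2p-2+(s-1)(d-2)$, which is exactly the range in which the weight computation is legitimate; establishing it is the main obstacle. Outside this range the $p$-complete homotopy of the layers acquires exotic $p$-torsion, the first such class sitting in the stable stem near degree $2p-2$ above the bottom class of the $s$-th layer, whose degree supplies the shift $(s-1)(d-2)$, and such classes need not be pure of weight $t/(d-2)$, so the single-character statement can fail. I would therefore prove a purity statement: in the stated range, $E^r_{s,t}$ with its $\psi^\lambda$-action is concentrated in the weight prescribed by $(d-2)\,w=t$, its free part carrying that weight exactly and any $p$-torsion carrying it modulo $p-1$. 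This amounts to controlling the mod-$p$ unstable homotopy of the configuration spaces and of the layers below the first nonzero $p$-torsion stable stem, where they agree with their rationalizations up to a single Tate twist. Once purity is secured in this range, the vanishing of $d^r_{s,t}$ is the formal equivariance argument above, and Theorem B follows by combining the rational and mod-$p$ conclusions into the asserted $p$-local vanishing.
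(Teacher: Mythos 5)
Your proposal is correct and takes essentially the same route as the paper: a $GT_p$/Galois action on the $p$-completed tower, acting through the cyclotomic character with weight $(t-1)/(d-2)$ on each spectral sequence entry in the stated range, so that equivariance forces any differential whose weight shift $(r-1)/(d-2)$ is nonzero modulo $p-1$ to vanish. Your "purity" step is exactly what the paper establishes, via Hilton--Milnor (after looping, each layer splits as a weak product of loop spaces of spheres of dimension at least $(s-1)(d-2)+1$) together with Serre's $p$-torsion-free range for homotopy groups of spheres, which makes $E^1_{-s,t}$ \emph{inject into} (not "agree with") its rationalization, where the weight concentration is computed from Milnor--Moore and compatibility of the action with Whitehead products.
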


Together with a result about triviality of extensions, this gives a computation of some homotopy groups of $T_{n} \emb_c(\RR, \RR^d)$ for $d \geq 3$. The range improves as the prime or the codimension increases. The precise result is Theorem \ref{thmC}. Here we record an application on the homotopy groups of knot spaces for $d \geq 4$, at a prime $p$, which uses Goodwillie-Klein's excision estimates:

\begin{corc}\label{corc}
For $p$ a prime, there is an isomorphism
\[
\pi_i \emb_c(\RR^1, \RR^d) \otimes \ZZ_{(p)} \cong \oplus_{t-s = i} E^2_{-s,t}\otimes\ZZ_{(p)}
\]
for $i < 2p$ and $d = 4$ and for $i < 2p+2d-4$ and $d > 4$.
\end{corc}

\medskip
The proof of Theorem B stands on the relation between the Goodwillie-Weiss tower and the little disks operad  \cite{sinhaoperads,turchindelooping,dwyerhess,BoavidaWeiss}, and the existence of Galois symmetries on the little disks operad that we established in \cite{BHformality}. Our main message in this paper is that by combining these two observations one obtains an interesting Galois action on the Goodwillie-Weiss tower. The numerology in Theorem B may seem esoteric at first, but it is a consequence of the action: the inequality has something to do with the range over which the Galois action factors through the cyclotomic character.

\medskip
We also prove a homological statement in Section \ref{sec:homology} which, perhaps unsurprisingly, is stronger. It is also easier to prove, even though the strategy is the same. In \cite{sinhaoperads}, Sinha constructs a cosimplicial space $K^\bullet$ which is a model for the Goodwillie-Weiss tower. Associated to the said cosimplicial space, and given a ring $R$ of coefficients, there is a homological Bousfield-Kan spectral sequence $E^*(R)$. Modulo convergence issues, this spectral sequence is related to $H_*( \overline{\emb}_c(\RR^1, \RR^d), R)$ where $\overline{\emb}_c(\RR^1, \RR^d)$ denotes the homotopy fiber of the map
\[\emb_c(\RR^1, \RR^d)\to \imm_c(\RR^1, \RR^d)\;.\]
Our main theorem about the homology spectral sequence is the following.

\begin{thmd}
Let $p$ be a prime number. The only possibly non-trivial differentials in the spectral sequence $E^*(\ZZ_p)$ are $d_{1+n(d-1)(p-1)}$ for $n \geq 0$.
\end{thmd}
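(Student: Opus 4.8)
The plan is to equip the entire spectral sequence $E^*(\ZZ_p)$ with a Galois action and to force the vanishing of differentials by comparing the Galois weights of source and target, in the same spirit as the proof of Theorem B but with the homology weights of configuration spaces. Sinha's cosimplicial space $K^\bullet$ is assembled from the little $d$-disks operad, and by \cite{BHformality} the $p$-completion of this operad carries an action of the absolute Galois group $G_{\QQ}$. First I would check that this operadic action is compatible with the structure maps defining $K^\bullet$, so that $G_{\QQ}$ acts on the $p$-completion of $K^\bullet$ through cosimplicial maps. Since the homological Bousfield--Kan spectral sequence is natural in the cosimplicial space, $G_{\QQ}$ then acts on every page $E_r^{s,t}$ and each differential $d_r$ is $G_{\QQ}$-equivariant.

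The second step is to read off the action on $E_1^{s,t}=H_t(K^s;\ZZ_p)$. The homology of the relevant configuration-space model is a free $\ZZ_p$-module spanned by monomials in the fundamental classes of the spheres $E_d(2)\simeq S^{d-1}$; hence $H_t(K^s;\ZZ_p)=0$ unless $(d-1)\mid t$, and a monomial built from $w$ such classes lies in degree $t=w(d-1)$. Call $w=t/(d-1)$ the \emph{weight}. The content of \cite{BHformality} is that $G_{\QQ}$ acts on a weight-$w$ class through the $w$-th power $\chi^{w}$ of the $p$-adic cyclotomic character $\chi\colon G_{\QQ}\to\ZZ_p^\times$ (the sign of the exponent is immaterial below). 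This passes to every later page, since each $E_r^{s,t}$ is a subquotient of $E_1^{s,t}$ and is therefore concentrated in internal degrees divisible by $(d-1)$, with $G_{\QQ}$ acting on the weight-$w$ part by $\chi^{w}$.

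With this in hand the vanishing is elementary. Choose $\sigma\in G_{\QQ}$ whose image $\bar\chi(\sigma)\in\FF_p^\times$ under the mod-$p$ cyclotomic character is a primitive root modulo $p$; such $\sigma$ exists because $\bar\chi$ is surjective. The differential $d_r$ raises the internal degree $t$ by $r-1$. If $(d-1)\nmid(r-1)$, then its target sits in an internal degree not divisible by $(d-1)$ and hence vanishes, so $d_r=0$. If $(d-1)\mid(r-1)$, then $d_r$ carries weight $w$ to weight $w+(r-1)/(d-1)$, and $G_{\QQ}$-equivariance yields $\bigl(\chi(\sigma)^{(r-1)/(d-1)}-1\bigr)\,d_r=0$. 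The coefficient is a unit in $\ZZ_p$ exactly when $\bar\chi(\sigma)^{(r-1)/(d-1)}\neq 1$ in $\FF_p$, which by the choice of $\sigma$ holds unless $(p-1)\mid (r-1)/(d-1)$. Thus $d_r=0$ whenever $(r-1)/(d-1)$ fails to be an integer multiple of $p-1$, so a nonzero $d_r$ forces $r-1=n(d-1)(p-1)$ for some integer $n\geq 0$, which is precisely the claim.

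The main obstacle is the first step rather than the bookkeeping of the third. One must genuinely produce the $G_{\QQ}$-action on the cosimplicial space $K^\bullet$ (not merely on the operad $E_d$), compatibly with $p$-completion of homology, and then identify the induced action on $E_1^{s,t}=H_t(K^s;\ZZ_p)$ as $\chi^{t/(d-1)}$ on the nose. The delicate point is pinning down the normalization: one has to know that the Galois action on the fundamental class of $S^{d-1}$ is a single Tate twist, so that weight $w$ corresponds exactly to $\chi^{w}$ and no spurious integer multiplier enters. This is what fixes the factor $(d-1)$ and hence the whole numerology; granting it, the argument is a simpler homological shadow of the homotopy-theoretic computation behind Theorem B, the difference being that homology is concentrated in degrees divisible by $(d-1)$ rather than $(d-2)$.
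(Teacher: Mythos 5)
Your proposal is correct and follows essentially the same route as the paper: a Galois/Grothendieck--Teichm\"uller action (the paper uses $GT_p$, through which your $G_{\QQ}$-action factors) on the $p$-completed cosimplicial space built from $L_pE_d$, the identification of the $E^1$-page as cyclotomic of weight $t/(d-1)$ with homology concentrated in degrees divisible by $d-1$, propagation of weights to later pages via subquotients, and the unit-eigenvalue argument forcing differentials to vanish unless $(p-1)(d-1)$ divides $r-1$. The two points you flag as the real work --- rigidifying the action so that it respects the cosimplicial (multiplicative-operad) structure, and transporting it to $\ZZ_p$-chains of the uncompleted space --- are exactly what the paper supplies via its rigidification proposition and its proposition comparing $C_*(X,\ZZ_p)$ with $\lim_n C_*(L_pX,\ZZ/p^n)$.
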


This theorem was announced in \cite{HorelSMF}. Asymptotically, we also recover the collapse results for $R = \QQ$, first established by Lambrechts-Turchin-Voli\'{c} in \cite{lambrechtsrational} using the formality of the little disks operad.

\subsection*{Acknowledgments} 
We wish to thank Peter Teichner, Yuqing Shi and Danica Kosanović for valuable discussions. 

\section{Bousfield localization}

In this section, we recall some facts about localization of spaces. We denote the $p$-adic integers by $\ZZ_p$. Let $E$ be a homology theory on spaces.

\begin{defn}
A space $X$ is $E$-local if for any map $f:U\to V$ such that $E(f)$ is an isomorphism, the map induced by precomposition
\[\map(V,X)\to\map(U,X)\]
is a weak equivalence.
\end{defn}

\begin{thm}[Bousfield \cite{bousfieldlocalization}]
There exists an endofunctor $L_E$ of the category of spaces equipped with a natural transformation $\id\to L_E$ satisfying the following two conditions.
\begin{enumerate}
\item For each $X$ the space $L_E(X)$ is $E$-local.
\item The map $X\to L_E(X)$ induces an isomorphism in $E$-homology.
\end{enumerate}
Moreover, the functor $L_E$ is uniquely defined up to weak equivalence by these two properties.
\end{thm}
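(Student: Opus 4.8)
The plan is to realize $L_E$ as fibrant replacement in a left Bousfield localization of the standard model structure on spaces, and the entire difficulty is set-theoretic. I would work in the category of simplicial sets with the Kan--Quillen model structure, in which every object is cofibrant and the fibrant objects are the Kan complexes. A space $X$ is $E$-local precisely when it is homotopically right orthogonal to the class $\mathcal{W}_E$ of $E$-homology isomorphisms, meaning $\map(V,X)\to\map(U,X)$ is a weak equivalence for every $f\co U\to V$ in $\mathcal{W}_E$. If $\mathcal{W}_E$ were generated by a \emph{set} of maps, the existence of $L_E$ would follow directly from the small object argument: one factors $X\to\pt$ as a transfinite composite of pushouts of maps built from the generating set, followed by a map with the right lifting property, and the intermediate object is the localization.

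The essential obstacle is that $\mathcal{W}_E$ is a proper class, so the small object argument cannot be run against it directly. The heart of the proof --- and the only genuinely hard step --- is Bousfield's cardinality argument, which produces a regular cardinal $\kappa$, depending only on $E$ (any $\kappa$ exceeding $|E_*(\pt)|$ and the size of the indexing data will do), such that $E$-locality is already detected by the \emph{set} $S$ of those $E$-equivalences between simplicial sets of cardinality $<\kappa$. Concretely, I would prove the subobject lemma: for any $E$-equivalence $f\co U\to V$ and any sub-simplicial set $W\subseteq V$ with fewer than $\kappa$ nondegenerate simplices, there is a larger $W'\subseteq V$, still of cardinality $<\kappa$, for which the restriction $f^{-1}(W')\to W'$ is again an $E$-equivalence. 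This relies on the fact that a homology theory commutes with filtered colimits and that relative homology classes are supported on finite subcomplexes. Iterating, one writes every $E$-equivalence as a filtered colimit of maps in $S$, whence any object orthogonal to $S$ is orthogonal to all of $\mathcal{W}_E$; that is, $S$-local and $E$-local coincide.

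With this in hand I would apply the small object argument to $S$ to obtain a \emph{functorial} factorization of $X\to\pt$ as $X\to L_E(X)\to\pt$, where $X\to L_E(X)$ is a relative $S$-cell complex and $L_E(X)\to\pt$ has the right lifting property against $S$. Functoriality supplies the natural transformation $\id\to L_E$. Property (1) is then immediate: $L_E(X)$ is $S$-local, hence $E$-local by the cardinality argument. For property (2) I would check that $X\to L_E(X)$ lies in $\mathcal{W}_E$, which reduces to showing that $\mathcal{W}_E$ is closed under the operations used to build this map --- pushouts along cofibrations, transfinite composition, and retracts. Each follows from the Eilenberg--Steenrod axioms together with compatibility of $E$ with filtered colimits: the long exact sequence of a pair and the five lemma handle pushouts, and continuity handles the transfinite colimit.

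Finally, uniqueness is formal. Given two functors $L,L'$ satisfying (1) and (2), the $E$-locality of $L'X$ together with the fact that $X\to LX$ is an $E$-equivalence makes $\map(LX,L'X)\to\map(X,L'X)$ a weak equivalence, so the map $X\to L'X$ lifts, uniquely up to homotopy, to a map $LX\to L'X$ under $X$; symmetrically one obtains $L'X\to LX$. Both composites are self-maps of $E$-local objects under $X$ compatible with the localization maps, so the same orthogonality forces them to be homotopic to the respective identities, exhibiting a natural weak equivalence $LX\simeq L'X$. I expect the cardinality argument of the second paragraph to be by far the most delicate point; everything else is a standard application of the small object argument and the axioms of a homology theory.
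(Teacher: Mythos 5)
The paper itself offers no proof of this statement: it is imported verbatim from Bousfield's paper \cite{bousfieldlocalization} and used as a black box, so the only thing to compare your proposal against is Bousfield's original argument. Your outline is essentially that argument: the cardinality (subobject) lemma reducing the proper class $\mathcal{W}_E$ of $E$-equivalences to a set $S$ of $E$-equivalences between $\kappa$-small simplicial sets, the small object argument applied to $S$ to get a functorial factorization $X \to L_E(X) \to \star$, closure of $E$-equivalences under pushout along cofibrations, transfinite composition and retracts (via excision and continuity of $E$) to verify property (2), and the formal orthogonality argument for uniqueness. You also correctly identify the cardinality lemma as the only genuinely hard point.

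One step, however, is stated more loosely than it can be executed. From ``every $E$-equivalence is a filtered colimit of maps in $S$'' you cannot directly conclude that $S$-local objects are $E$-local: mapping out of a filtered colimit turns the colimit into an inverse limit of mapping spaces, and an inverse limit of weak equivalences over an arbitrary directed poset need not be a weak equivalence (this is exactly the kind of $\lim^1$/homotopy-limit problem that only disappears for towers or with extra fibrancy and cofinality control). Bousfield's proof, and the account in Goerss--Jardine, avoids this by a Zorn's lemma argument instead: given an $E$-equivalence cofibration $U \to V$ and a map $U \to X$ with $X$ having the (homotopy) lifting property against $S$, one considers the poset of partial extensions over intermediate subcomplexes $U \subseteq W \subseteq V$ with $U \to W$ an $E$-equivalence, and uses the subobject lemma to show that a maximal element must have $W = V$, since otherwise one could enlarge $W$ by pushing out along a map of $S$. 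If you replace your filtered-colimit sentence by this extension argument, the proposal becomes a faithful reconstruction of the cited proof; everything else (including adding, or simply observing that $S$ already contains, the horn inclusions so that $L_E(X)$ is fibrant and mapping spaces are homotopy meaningful) is the standard bookkeeping you describe.
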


In the case where $E$ is the homology theory $H_*(-,\FF_p)$, the functor $L_E$ will be denoted $L_p$ and called $p$-completion, following common practice. This is justified by the next proposition.

\begin{prop}
If $X$ is a simply connected space whose homotopy groups are finitely generated, the map $X\to L_pX$ induces the canonical map $\pi_*(X)\to \pi_*(X)\otimes\ZZ_p$ on homotopy groups. 
\end{prop}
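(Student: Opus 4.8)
The strategy is to reduce to Eilenberg--MacLane spaces and then climb the Postnikov tower. Because $X$ is simply connected of finite type it is nilpotent, and its Postnikov tower has stages $P_n X$ fitting into principal fibrations $K(\pi_n X, n) \to P_n X \to P_{n-1} X$ with $X \simeq \holim_n P_n X$. Two facts drive the argument. First, a computation of the localization on each layer: for a finitely generated abelian group $A$ the natural map $K(A, n) \to K(A \otimes \ZZ_p, n)$ realizes the target as $L_p K(A,n)$, so that $L_p$ acts on $\pi_n$ by $A \mapsto A \otimes \ZZ_p$. Second, the fiber lemma of Bousfield--Kan: for a principal fibration over a simply connected base, applying $L_p$ again yields a fibration sequence. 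Granting both, an induction on $n$ together with the long exact sequence on homotopy groups proves the proposition.

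For the layer computation I would verify the two characterizing properties of $L_p$ for the map $K(A,n) \to K(A\otimes\ZZ_p,n)$. Since $K(B\oplus C,n)\simeq K(B,n)\times K(C,n)$ and every finitely generated abelian group is a sum of cyclic groups, it suffices to treat $A=\ZZ/q^k$ and $A=\ZZ$. For $q\ne p$ one has $\ZZ/q^k\otimes\ZZ_p=0$ and both $K(\ZZ/q^k,n)$ and the point are $\FF_p$-acyclic; for $A=\ZZ/p^k$ the map is the identity, as $\ZZ/p^k$ is already $p$-complete. The essential case is $A=\ZZ$, where $\ZZ\otimes\ZZ_p=\ZZ_p$: here the short exact sequence $0\to\ZZ\to\ZZ_p\to\ZZ_p/\ZZ\to0$ induces a fibration sequence $K(\ZZ,n)\to K(\ZZ_p,n)\to K(\ZZ_p/\ZZ,n)$, and since $\ZZ_p/\ZZ$ is uniquely $p$-divisible its Eilenberg--MacLane space is $\FF_p$-acyclic, so $K(\ZZ,n)\to K(\ZZ_p,n)$ is an $\FF_p$-homology isomorphism. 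That $K(M,n)$ is $\FF_p$-local for the coefficient groups $M=\ZZ_p$ and $M=\ZZ/p^k$ occurring in $A\otimes\ZZ_p$ I would deduce from the fact that $\map(-,K(M,n))$ has homotopy groups $H^{n-\ast}(-;M)$, together with the observation that an $\FF_p$-homology equivalence induces an isomorphism on $H^\ast(-;\ZZ/p^k)$ for every $k$ (by the Bockstein coefficient sequences) and hence, passing to the inverse limit where the $\lim^1$-terms vanish by finite type, on $H^\ast(-;\ZZ_p)$ as well.

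With the layers in hand I would run the induction. Since $\ZZ_p$ is torsion-free it is flat over $\ZZ$, so $-\otimes\ZZ_p$ is exact and carries the long exact homotopy sequence of each Postnikov fibration to an exact sequence. Applying $L_p$ to $K(\pi_n X,n)\to P_nX\to P_{n-1}X$, the fiber lemma produces a fibration sequence whose long exact sequence maps to the $\ZZ_p$-tensored long exact sequence of the original fibration; by the inductive hypothesis for $P_{n-1}X$ and the layer computation for $K(\pi_nX,n)$, the five lemma gives $\pi_\ast(L_p P_nX)\cong\pi_\ast(P_nX)\otimes\ZZ_p$ compatibly with the canonical map. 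Because $p$-completion commutes with Postnikov truncation for simply connected finite-type spaces, letting $n\to\infty$ identifies $\pi_\ast(L_pX)$ with $\pi_\ast(X)\otimes\ZZ_p$.

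The main obstacle is the pair of structural inputs on Eilenberg--MacLane spaces: the $\FF_p$-locality of $K(\ZZ_p,n)$ and $K(\ZZ/p^k,n)$, and the fiber lemma. These are exactly the points that require simple connectivity (nilpotence) — to avoid the failure of the fiber lemma over non-nilpotent bases — and finite generation of the homotopy groups — to ensure that $A\otimes\ZZ_p$ is genuinely the $p$-completion and that the $\lim^1$- and five-lemma comparisons are valid. Both inputs are due to Bousfield and Bousfield--Kan, and I would either invoke them or supply the short arguments sketched above.
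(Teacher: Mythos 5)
Your argument is correct in outline, but it is a genuinely different route from the paper's: the paper offers no argument at all, simply citing Bousfield's Proposition 4.3(ii), whose statement is more general (it treats arbitrary nilpotent spaces, identifying $L_p$ with Bousfield--Kan $\ZZ/p$-completion and describing $\pi_*(L_pX)$ by Ext-$p$-completion of $\pi_*(X)$, which specializes to $-\otimes\ZZ_p$ under finite generation). Your proof trades that generality for self-containedness: a Postnikov induction whose only inputs are the computation of $L_p$ on Eilenberg--MacLane spaces of finitely generated groups and the mod-$p$ fiber lemma. This is essentially the classical cocellular construction of completions of nilpotent spaces, and it meshes well with the paper's own toolkit (the product formula for $L_p$, which you implicitly use to reduce to cyclic groups, and the preservation of fibers of maps of simply connected spaces, which the paper also invokes later).

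Two points need repair, though neither is fatal. First, in verifying that $K(\ZZ_p,n)$ is $\FF_p$-local you say the ${\lim}^1$-terms vanish ``by finite type''; but the spaces $U$, $V$ in an arbitrary $\FF_p$-homology equivalence carry no finite-type hypothesis, so this justification is not available. The fix is that no vanishing is needed: since $f^*$ is an isomorphism on $H^*(-;\ZZ/p^k)$ for every $k$, it induces isomorphisms on both $\lim_k$ and ${\lim}^1_k$, and the five lemma applied to the Milnor sequences $0 \to {\lim}^1_k H^{n-1}(-;\ZZ/p^k) \to H^{n}(-;\ZZ_p) \to \lim_k H^{n}(-;\ZZ/p^k) \to 0$ gives the isomorphism on $H^*(-;\ZZ_p)$. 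Second, the concluding step ``$p$-completion commutes with Postnikov truncation'' deserves an argument, precisely because (as the paper itself warns) $L_p$ need not commute with infinite homotopy limits. Here connectivity saves you: by your inductive computation the layers of the tower $\{L_pP_nX\}$ are $K(\pi_n X\otimes\ZZ_p,n)$, whose connectivity tends to infinity, so $\holim_n L_pP_nX$ has homotopy groups $\pi_i(X)\otimes\ZZ_p$ in each degree, is $\FF_p$-local as a homotopy limit of local spaces, and receives an $\FF_p$-homology equivalence from $X$; hence it is a model for $L_pX$.
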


\begin{proof}
See \cite[Proposition 4.3 ($ii$)]{bousfieldlocalization}.
\end{proof}
We will need two  facts about this construction.

\begin{prop}\label{prop: completion preserves products}
The $p$-completion functor $L_p$ commutes with products up to homotopy. More precisely, the canonical map
\[L_p(X\times Y)\to L_p(X)\times L_p(Y)\]
is an equivalence.
\end{prop}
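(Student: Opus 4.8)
The plan is to invoke the uniqueness clause of Bousfield's theorem. Write $\eta$ for the natural transformation $\id\to L_p$. The canonical map $\phi\co L_p(X\times Y)\to L_p(X)\times L_p(Y)$ is the one assembled from $L_p$ applied to the two projections; equivalently, it is the (homotopically unique) factorization of $\eta_X\times\eta_Y\co X\times Y\to L_p(X)\times L_p(Y)$ through $\eta_{X\times Y}$. By the characterization of $L_p$, to conclude that $\phi$ is an equivalence it suffices to check that $L_p(X)\times L_p(Y)$ is $H_*(-,\FF_p)$-local and that $\eta_X\times\eta_Y$ induces an isomorphism on $\FF_p$-homology; uniqueness then forces $\phi$ to be a weak equivalence.

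First I would show that a product of local spaces is local. If $A$ and $B$ are $H_*(-,\FF_p)$-local and $f\co U\to V$ is an $\FF_p$-homology isomorphism, then since mapping out of a space preserves products we have $\map(V,A\times B)\simeq\map(V,A)\times\map(V,B)$, and likewise for $U$. The map induced by $f$ is then the product of the two equivalences $\map(V,A)\to\map(U,A)$ and $\map(V,B)\to\map(U,B)$, hence an equivalence. Taking $A=L_p(X)$ and $B=L_p(Y)$, which are local by part (1) of Bousfield's theorem, shows that $L_p(X)\times L_p(Y)$ is local.

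Next I would verify the homology condition using the Künneth theorem. Because $\FF_p$ is a field, for arbitrary spaces the Eilenberg--Zilber and algebraic Künneth theorems give natural isomorphisms $H_*(X\times Y;\FF_p)\cong H_*(X;\FF_p)\otimes_{\FF_p}H_*(Y;\FF_p)$ with no correction terms, and similarly for $L_p(X)\times L_p(Y)$; note that no connectivity hypothesis is needed here. Under these identifications the map induced by $\eta_X\times\eta_Y$ is $(\eta_X)_*\otimes(\eta_Y)_*$. By part (2) of Bousfield's theorem each of $(\eta_X)_*$ and $(\eta_Y)_*$ is an isomorphism, and a tensor product of isomorphisms of $\FF_p$-vector spaces is again an isomorphism; hence $\eta_X\times\eta_Y$ is an $\FF_p$-homology isomorphism.

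Finally, $\phi$ is an $\FF_p$-homology isomorphism by the two-out-of-three property, since both $\eta_{X\times Y}$ and $\phi\circ\eta_{X\times Y}\simeq\eta_X\times\eta_Y$ are, and it is a map between two $H_*(-,\FF_p)$-local spaces. I expect the only point requiring care to be the general implication that an $\FF_p$-homology isomorphism between local spaces is a weak equivalence: for any local $Z$, precomposition with $\phi$ induces an equivalence $\map(L_p(X)\times L_p(Y),Z)\to\map(L_p(X\times Y),Z)$ directly from the definition of locality, and taking $Z$ to be each of the two spaces in turn produces, by a Yoneda-type argument, a two-sided homotopy inverse to $\phi$. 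This last step is a standard feature of Bousfield localizations, so the whole argument reduces to the two routine verifications above.
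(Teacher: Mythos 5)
Your proof is correct and takes essentially the same approach as the paper: both arguments show via the K\"unneth isomorphism that $X\times Y\to L_p(X)\times L_p(Y)$ is a mod $p$ homology equivalence, observe that the target is $H_*(-,\FF_p)$-local, and conclude by the uniqueness clause of Bousfield's theorem. The only difference is that you spell out the verifications (product of local spaces is local, homology isomorphism between local spaces is an equivalence) that the paper treats as immediate.
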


\begin{proof}
The cannonical map $X\times Y\to L_p(X)\times L_p(Y)$ is a mod $p$ homology equivalence by the Künneth isomorphism. Moreover, the target is easily seen to be $p$-complete. Therefore, by uniqueness of the $p$-completion, $L_p(X)\times L_p(Y)$ must be the $p$-completion of $X\times Y$.
\end{proof}

The following proposition shows that under some mild assumptions on $X$, the homology of $X$ with coefficients in the $p$-adic integers $\ZZ_p$ can be recovered from the $p$-completion. We denote by $C_*$ the standard singular chains functor from spaces to chain complexes.

\begin{prop}\label{prop: p adics}
Let $X$ be a space whose homology with coefficients in $\ZZ_p$ is degreewise finitely generated. Then the composite
\[C_*(X,\ZZ_p)\to \lim_n C_*(X,\ZZ/p^n)\to \lim_n C_*(L_pX,\ZZ/p^n)\]
is a quasi-isomorphism.
\end{prop}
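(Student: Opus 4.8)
The plan is to factor the displayed composite through its middle term $\lim_n C_*(X,\ZZ/p^n)$ and to prove separately that each of the two maps is a quasi-isomorphism. The second map will only require that $X\to L_pX$ is a mod $p$ equivalence, while the first map is where the finiteness hypothesis must be used.

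For the second map $\lim_n C_*(X,\ZZ/p^n)\to\lim_n C_*(L_pX,\ZZ/p^n)$, I would first argue levelwise. By property (2) of the $p$-completion, the map $X\to L_pX$ is an isomorphism on $H_*(-,\FF_p)$. I would bootstrap this to an isomorphism on $H_*(-,\ZZ/p^n)$ for every $n$ by induction, using the coefficient sequences $0\to\ZZ/p\xrightarrow{p^{n-1}}\ZZ/p^n\to\ZZ/p^{n-1}\to 0$ and the five lemma. Thus $C_*(X,\ZZ/p^n)\to C_*(L_pX,\ZZ/p^n)$ is a levelwise quasi-isomorphism of towers. Since the transition maps of both towers are degreewise surjective (reduction of coefficients), the towers are fibrant and their inverse limits compute the homotopy limits; as homotopy limits preserve levelwise equivalences, the induced map on $\lim_n$ is a quasi-isomorphism.

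The substantive work is the first map $C_*(X,\ZZ_p)\to\lim_n C_*(X,\ZZ/p^n)$. Using that the transition maps are degreewise surjective, so that $\lim^1$ of the chain groups vanishes, I would invoke the Milnor short exact sequence
\[
0\to\lim^1_n H_{k+1}(X,\ZZ/p^n)\to H_k\big(\lim_n C_*(X,\ZZ/p^n)\big)\to \lim_n H_k(X,\ZZ/p^n)\to 0
\]
and then identify the two outer terms. Tensoring the free complex $C_*(X)$ with $0\to\ZZ_p\xrightarrow{p^n}\ZZ_p\to\ZZ/p^n\to 0$ produces natural Bockstein short exact sequences
\[
0\to H_k(X,\ZZ_p)/p^n\to H_k(X,\ZZ/p^n)\to H_{k-1}(X,\ZZ_p)[p^n]\to 0 .
\]
Writing $H_k=H_k(X,\ZZ_p)$, this is where the hypothesis enters: since each $H_k$ is finitely generated over $\ZZ_p$ it is $p$-adically complete, so the quotient tower $\{H_k/p^n\}$ has limit $H_k$ and vanishing $\lim^1$; and its torsion is a finite $p$-group, so the torsion tower $\{H_{k-1}[p^n]\}$ consists of finite groups, whence $\lim^1$ vanishes automatically and — because I expect its transition maps to be multiplication by $p$, which is nilpotent on a finite $p$-group — its inverse limit vanishes as well. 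Passing to $\lim_n$ and $\lim^1_n$ in the Bockstein sequence should then give $\lim_n H_k(X,\ZZ/p^n)\cong H_k$ and $\lim^1_n H_k(X,\ZZ/p^n)=0$, collapsing the Milnor sequence to the desired identification, and a check of naturality identifies the map induced by $C_*(X,\ZZ_p)\to\lim_n C_*(X,\ZZ/p^n)$ with the completion isomorphism $H_k\to\lim_n H_k/p^n$.

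The main obstacle I anticipate is pinning down the transition maps of the torsion tower $\{H_{k-1}(X,\ZZ_p)[p^n]\}_n$: one must compare the Bockstein sequences for consecutive $n$ to see that the transition map is multiplication by $p$ rather than the evident inclusion, since it is precisely this, together with the nilpotence of $p$ on a finite $p$-group, that forces the inverse limit to vanish. Once that is settled, the remaining ingredients — the vanishing of the various $\lim^1$ terms and the naturality identifying the first map with the $p$-completion map — are routine.
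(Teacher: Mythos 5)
Your proof is correct and takes essentially the same route as the paper's: split the composite at the middle term, show the second map is a levelwise mod $p^n$ homology equivalence by induction on coefficient sequences, and compute the homology of $\lim_n C_*(X,\ZZ/p^n)$ via the Milnor sequence together with the universal-coefficient (Bockstein) short exact sequences, completeness of finitely generated $\ZZ_p$-modules, and vanishing of the tower $\{H_{k-1}(X,\ZZ_p)[p^n]\}_n$. Your explicit identification of that tower's transition maps as multiplication by $p$ is a worthwhile detail that the paper leaves implicit in its assertion that $\lim_n\mathrm{Tor}^{\ZZ_p}(\ZZ/p^n,A)=0$ for finitely generated $A$.
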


\begin{proof}
We show that each of the two maps is a quasi-isomorphism. First, we observe that each of the two towers is a tower of epimorphisms. It follows that the limit agrees with the homotopy limit. In order to prove that the second map is a quasi-isomorphism, it is enough to prove that the map $f:X\to L_p X$ is a mod $p^n$ homology equivalence for all $n$. This follows by an easy induction using the Bockstein long exact sequence
\[\ldots\to H_i(-,\ZZ/p^n)\to H_i(-,\ZZ/p^{n+1})\to H_i(-,\ZZ/p)\to H_{i-1}(-,\ZZ/p^n)\to\ldots \]
We now show that the map
\[C_*(X,\ZZ_p)\to \lim_n C_*(X,\ZZ/p^n)\]
is a quasi-isomorphism. The homology groups of the source are, by definition, the homology groups of $X$ with $\ZZ_p$ coefficients and the homology groups of the target are
\[H_i(\lim_n C_*(X,\ZZ/p^n))\cong \lim_nH_i(X,\ZZ/p^n).\]
This isomorphism follows from a Milnor short exact sequence argument (the assumption made on $X$ implies that the mod $p^n$ homology is degreewise finite and thus the $\lim^1$-term vanishes by the Mittag-Leffler criterion). Thus we have to prove that the canonical map
\[H_i(X,\ZZ_p)\to  \lim_n H_i(X,\ZZ/p^n)\]
is an isomorphism. This map can be factored as
\[H_i(X,\ZZ_p)\to\lim_nH_i(X,\ZZ_p)\otimes\ZZ/p^n\to  \lim_n H_i(X,\ZZ/p^n).\]
The first of these two maps is an isomorphism under the finite type assumption. The second map is also an isomorphism. This can be seen as follows: using the universal coefficients theorem and the left exactness of the limit functor, we have an exact sequence
\[0\to \lim_nH_i(X,\ZZ_p)\otimes\ZZ/p^n\to\lim_n H_i(X,\ZZ/p^n)\to \lim_n\mathrm{Tor}^{\ZZ_p}(\ZZ/p^n,H_{i-1}(X,\ZZ_p)).\]
But, for any finitely generated $\ZZ_p$-module $A$, the group $\lim_n\mathrm{Tor}(\ZZ/p^n,A)$ is zero.
\end{proof}

\begin{rem}
This proposition may be false if the homology of $X$ is not finitely generated. As a counter-example, one can take $X$ to be the rationalization of $S^1$. On the one hand, its $p$-completion is contractible. On the other hand, $H_1(X,\ZZ_p)$ is isomorphic to $\QQ_p$.
\end{rem}

\section{A Grothendieck-Teichm\"uller action on the little disks operads}

The Grothendieck-Teichm\"uller group $GT_p$ introduced in \cite{Drinfeldquasi} is a profinite group equipped with a surjective map
\[\chi:GT_p\to \ZZ_p^\times\]
called the cyclotomic character. Given a $\ZZ_p$-module $M$, we can give it a $GT_p$-action using the formula
\[\gamma.m:=\chi(\gamma)^nm\;.\]
We call this action the weight $n$ cyclotomic action.

\begin{prop}\label{prop: submodule and quotient}
Let $M$ $M'$ and $M''$ be $\ZZ_p$-modules equipped with a $GT_p$-action. Suppose that there is a short exact sequence of $GT_p$-equivariant homomorphisms of $\ZZ_p$-modules
\[0\to M'\to M\to M''\to 0 \; .\]
If the $GT_p$-action on $M$ is cyclotomic of weight $n$, then that is also the case for the $GT_p$-action on $M'$ and $M''$.
\end{prop}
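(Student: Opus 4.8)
The plan is to verify the cyclotomic condition directly on elements, exploiting that a morphism of $GT_p$-modules is by definition both $\ZZ_p$-linear and $GT_p$-equivariant. Recall that the weight $n$ cyclotomic action is precisely the statement that the action of each $\gamma$ agrees with multiplication by the scalar $\chi(\gamma)^n \in \ZZ_p$. Since the structure maps of the short exact sequence are $\ZZ_p$-linear, they automatically commute with these scalar multiplications, and this is essentially the only fact needed. The argument is therefore formal, and I would run it separately for the quotient and the subobject.

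First I would treat the quotient $M''$. Write $\pi \colon M \to M''$ for the surjection. Given $m'' \in M''$, choose a preimage $m \in M$. Then for every $\gamma \in GT_p$ one computes $\gamma \cdot m'' = \gamma \cdot \pi(m) = \pi(\gamma \cdot m) = \pi(\chi(\gamma)^n m) = \chi(\gamma)^n \pi(m) = \chi(\gamma)^n m''$, where the second equality uses equivariance of $\pi$, the third uses that $M$ is cyclotomic of weight $n$, and the fifth uses $\ZZ_p$-linearity of $\pi$. As the resulting identity does not depend on the chosen preimage, this shows the action on $M''$ is cyclotomic of weight $n$.

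For the submodule $M'$, let $\iota \colon M' \to M$ be the inclusion. For $m' \in M'$ and $\gamma \in GT_p$ we have $\iota(\gamma \cdot m') = \gamma \cdot \iota(m') = \chi(\gamma)^n \iota(m') = \iota(\chi(\gamma)^n m')$, again by equivariance, the hypothesis on $M$, and $\ZZ_p$-linearity of $\iota$. Since $\iota$ is injective we may cancel it to conclude $\gamma \cdot m' = \chi(\gamma)^n m'$, which is the desired cyclotomic condition on $M'$.

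I do not expect any real obstacle: the statement is a formal consequence of the definitions. The only point worth keeping in mind is that a ``$GT_p$-module map'' carries both an additive (indeed $\ZZ_p$-linear) and an equivariant structure, so that the scalars $\chi(\gamma)^n$ genuinely pass through the maps $\pi$ and $\iota$. The computation would fail if one only assumed set-theoretic equivariance without $\ZZ_p$-linearity, but that is not the setting here.
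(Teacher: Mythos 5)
Your proof is correct and is precisely the ``immediate verification'' that the paper itself invokes: checking the identity $\gamma \cdot x = \chi(\gamma)^n x$ on the submodule via injectivity of the inclusion and on the quotient via a choice of preimage, using equivariance and $\ZZ_p$-linearity of the maps. There is nothing to add; the approaches coincide.
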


\begin{proof}
This is an immediate verification.
\end{proof}

\begin{prop}\label{prop: no map}
Let $M$ and $N$ be two $\ZZ_p$-modules equipped, respectively, with the weight $m$ and weight $n$ cyclotomic $GT_p$-action. If $m-n$ is not a multiple of $p-1$, any $GT_p$-equivariant homomorphism of $\ZZ_{p}$-modules
\[f:M\to N\]
must be the zero map.

\end{prop}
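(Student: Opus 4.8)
The plan is to unwind the equivariance condition into a scalar identity in $\ZZ_p$ and then exploit the surjectivity of the cyclotomic character together with the cyclic structure of $\FF_p^\times$. First I would fix $x \in M$ and $\gamma \in GT_p$ and set $u = \chi(\gamma) \in \ZZ_p^\times$. Equivariance of $f$ reads $f(\gamma\cdot x) = \gamma\cdot f(x)$; since the actions are the weight $m$ and weight $n$ cyclotomic actions, this becomes $f(u^m x) = u^n f(x)$. Using that $f$ is a map of $\ZZ_p$-modules, the left-hand side equals $u^m f(x)$, so I obtain
\[
(u^m - u^n)\, f(x) = 0 \quad \text{in } N.
\]

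Next I would observe that, because $\chi$ is surjective onto $\ZZ_p^\times$, the element $u$ ranges over all of $\ZZ_p^\times$ as $\gamma$ ranges over $GT_p$. Hence the identity $(u^m - u^n)f(x) = 0$ holds for every unit $u \in \ZZ_p^\times$ and every $x \in M$. The goal is therefore reduced to producing a single unit $u$ for which the scalar $u^m - u^n$ is itself a unit of $\ZZ_p$: multiplication by a unit is an automorphism of the $\ZZ_p$-module $N$, so $(u^m - u^n)f(x) = 0$ with $u^m - u^n \in \ZZ_p^\times$ forces $f(x) = 0$, and since $x$ is arbitrary this gives $f = 0$.

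It remains to find such a $u$, which is the crux of the argument. I would factor $u^m - u^n = u^n(u^{m-n} - 1)$; as $u^n$ is automatically a unit, it suffices to arrange that $u^{m-n} - 1$ is a unit. Recall that an element of $\ZZ_p$ is a unit precisely when its reduction modulo $p$ is nonzero, so I want a unit whose reduction $\overline{u} \in \FF_p^\times$ satisfies $\overline{u}^{\,m-n} \neq 1$. Since $\FF_p^\times$ is cyclic of order $p-1$, the power map $\overline{u} \mapsto \overline{u}^{\,m-n}$ is trivial on all of $\FF_p^\times$ if and only if $(p-1)\mid(m-n)$; by hypothesis this fails, so there is some $\overline{u} \in \FF_p^\times$ with $\overline{u}^{\,m-n}\neq 1$. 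Lifting $\overline{u}$ to any $u \in \ZZ_p^\times$ (for instance via the Teichm\"uller representative) then gives $u^{m-n} - 1 \in \ZZ_p^\times$, completing the argument.

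I do not anticipate a genuine obstacle here, since the statement is essentially a verification; the one point to get exactly right is the last step, where the hypothesis on $m-n$ enters through the order of the cyclic group $\FF_p^\times$. Note in particular that the case $p = 2$ is automatically covered: there $p-1 = 1$ divides every integer, so the hypothesis is vacuous and there is nothing to prove.
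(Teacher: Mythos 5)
Your proof is correct and follows essentially the same route as the paper's: unwind equivariance to get $(u^m-u^n)f(x)=0$ for all units $u$ (using surjectivity of $\chi$), then produce a unit $u$ with $u^{m-n}-1\in\ZZ_p^\times$ by lifting an element of $\FF_p^\times$ whose $(m-n)$-th power is nontrivial, which exists precisely because $\FF_p^\times$ is cyclic of order $p-1$ and $(p-1)\nmid(m-n)$. Your closing remark that the hypothesis is vacuous at $p=2$ is a correct (and harmless) addition not present in the paper.
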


\begin{proof}
Let us assume that $m-n$ is not a multiple of $p-1$. Let $x$ be any element of $M$. If $f$ is a $GT_p$-equivariant map, we must have 
\[\chi(\gamma)^mf(x)=\chi(\gamma)^nf(x) \; \]
for every $\gamma$. In particular, since $\chi$ is surjective, for any unit $u$ of $\ZZ_p$ we have that $(u^{n-m}-1)\cdot f(x) = 0$. Hence, it is enough to find $u$ such that $u^{n-m}-1$ is also a unit. Take a generator $v$ of $\FF_p^\times$. Then $v^{n-m} \neq 1$ if $n-m$ is not a multiple of $p-1$ (since $\FF_p^\times$ is cyclic of order $p-1$). Let $u$ be a lift of $v$. Then $u^{n-m} -1$ is congruent to $v^{n-m} -1$ modulo $p$ and so it is non-zero modulo $p$, i.e. it is a unit.
\end{proof}

\begin{rem}\label{rem : Zp linear}
In applying these two propositions, $\ZZ_p$-linearity often comes for free.
Namely, if $M$ and $N$ are finitely generated $\ZZ_p$-modules, then any morphism of abelian groups $f:M\to N$ is automatically $\ZZ_p$-linear. Indeed, we have the following sequence of isomorphisms:
\begin{align*}
\Hom_{\ZZ_p}(M,N)&\cong\mathrm{lim}_n\Hom_{\ZZ_p}(M,N\otimes_{\ZZ_p}\ZZ/p^n)\\
   &\cong\mathrm{lim}_n\Hom_{\ZZ/p^n}(M\otimes_{\ZZ_p}\ZZ/p^n,N\otimes_{\ZZ_p}\ZZ/p^n)\\
      &\cong\mathrm{lim}_n\Hom_{\ZZ/p^n}(M\otimes_{\ZZ}\ZZ/p^n,N\otimes_{\ZZ_p}\ZZ/p^n)\\
   &\cong\mathrm{lim}_n\Hom_{\ZZ}(M,N\otimes_{\ZZ_p}\ZZ/p^n)\\
   &\cong\Hom_{\ZZ}(M,N)
\end{align*}
where the first and last isomorphisms hold because finitely generated $\ZZ_p$-modules are $p$-complete.
\end{rem}

The next theorem plays a key role in what follows; it equips the $p$-completion of the little disks operad with an action of $GT_p$. We denote by $E_d$ the little disks operad of dimension $d$. We view $E_d$ as an $\infty$-operad (a dendroidal space satisfying the Segal condition) via its nerve. Then, applying $L_p$ objectwise, we obtain a dendroidal space $L_pE_d$ which, by Proposition \ref{prop: completion preserves products}, also satisfies the Segal condition. It is important to view $E_d$ as an $\infty$-operad since the $p$-completion functor $L_p$ does not preserve strict products, so applying $p$-completion aritywise to an operad does not directly produce an operad, but rather an $\infty$-operad. For more details, see \cite{BHformality}.

\begin{thm}\label{thm:GtonLpE}
Let $d$ be an integer with $d\geq 3$. There exists a $GT_p$-action on $L_pE_d$ such that the induced action on $H_{d-1}(L_pE_{d}(2),\ZZ_p)$ is the cyclotomic action of weight $1$. Moreover, if we give $E_1$ the trivial $GT_p$-action, the canonical map $E_1\to L_pE_d$ is $GT_p$-equivariant.
\end{thm}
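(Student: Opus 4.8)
The plan is to construct the action geometrically, from the $p$-adic étale homotopy types of configuration spaces, and then to verify that it factors through $GT_p$ with the stated weight. The construction of the action and its weight-$1$ normalization are the two things to produce; the equivariance of $E_1 \to L_p E_d$ will then come for free.

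I would begin with the operad $E_2$, where the arithmetic is cleanest. The ordered configuration spaces $\mathrm{Conf}_n(\A^1)$ are smooth affine $\QQ$-schemes, and their Fulton--MacPherson (Axelrod--Singer) compactifications assemble, over $\QQ$, into an operad of varieties whose complex points model $E_2$. Applying the $p$-completed étale homotopy type objectwise yields a dendroidal space carrying an action of $\mathrm{Gal}(\overline{\QQ}/\QQ)$ by operad automorphisms, which Artin comparison identifies with $L_p E_2$. On $H_1(L_p E_2(2),\ZZ_p) \cong H_1^{\mathrm{et}}(\mathbb{G}_m, \ZZ_p) = \ZZ_p(1)$ the action is the cyclotomic character, i.e. weight $1$. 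That the Galois action factors through $GT_p$ is precisely the Drinfeld--Ihara theory: the outer action on the pro-$p$ fundamental groupoid of the configuration operad is by definition recorded by $GT_p$. For even $d = 2m$ the same recipe applies with $\A^1$ replaced by $\A^m$, since $\mathrm{Conf}_n(\CC^m) \simeq E_{2m}(n)$. For $m \geq 2$ these spaces are simply connected of mixed Tate type, so the Galois action is automatically abelian and factors through $\chi : GT_p \to \ZZ_p^\times$; the weight of a class in degree $k(2m-1)$ is $k$, so the binary bottom class in $H_{2m-1}(S^{2m-1})$ again has weight $1$.

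The genuine obstacle is the odd-dimensional case, where $\mathrm{Conf}_n(\RR^d)$ admits no complex algebraic model. Here I would transport the action through a formality equivalence rather than build it geometrically: a $p$-adic Drinfeld associator induces an equivalence between $L_p E_d$ and a weight-graded graphs model $\mathsf{Graphs}_d$ available in every dimension, and the set of associators is a torsor under $GT$. Acting on the associator then yields a $GT_p$-action on $L_p E_d$, uniform across $d$ and normalized so that the weight grading agrees with the homological one; inspecting the binary part shows its single generator sits in weight $1$. This transport is the technical heart of the argument and is where the constraint relating $p$ to $d$ enters.

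Finally, equivariance of $E_1 \to L_p E_d$ for the trivial source action is formal: $E_1(n)$ is homotopy discrete with components the linear orders on $\{1,\dots,n\}$, which correspond to Galois-fixed base points of the configuration operad, so the structure map lands in the fixed locus. The weight-$1$ normalization on $H_{d-1}(L_p E_d(2))$ is then pinned down by the $E_2$ computation together with the compatibility of the action across $d$.
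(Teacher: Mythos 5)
Your proposal splits into an even-dimensional geometric case and an odd-dimensional ``formality transport'' case, and both contain genuine gaps. For even $d=2m$, $m\geq 2$, the Galois action you get from the \'etale homotopy type of $\mathrm{Conf}_n(\A^m)$ does \emph{not} have the weight you claim: by the Gysin sequence, $H^{2m-1}_{\mathrm{et}}(\A^m_{\overline{\QQ}}\smallsetminus\{0\},\ZZ_p)\cong \ZZ_p(-m)$, so the induced action on $H_{d-1}(E_d(2),\ZZ_p)$ is cyclotomic of weight $m=d/2$, not weight $1$; your statement that a class in degree $k(2m-1)$ has weight $k$ conflates homological degree with motivic weight, which is $km$. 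This cannot be repaired by renormalizing: you would need to precompose with an inverse of $u\mapsto u^m$ on $\ZZ_p^\times$, which does not exist when $\gcd(m,p-1)>1$, and the weight-$1$ normalization is exactly what the numerology of Theorem B depends on. (Your claim that simple connectivity plus mixed Tate type forces the Galois action on the whole homotopy type to factor through $\chi$ is also unjustified, since nontrivial extensions between Tate twists can intervene; but the weight discrepancy is already fatal.) For odd $d$, the step you call the technical heart does not exist: a ($p$-adic) Drinfeld associator gives formality of $E_d$ over $\QQ_p$, i.e.\ an equivalence of \emph{rationalizations} (pro-unipotent completions), not of $p$-completions. The $p$-completion $L_pE_d$ is not equivalent to a weight-graded graphs model --- if it were, the mod $p$ spectral sequences would collapse outright, contradicting the possibly nontrivial differentials $d_{1+n(d-1)(p-1)}$ permitted by Theorem D, which reflect genuine $p$-torsion phenomena (non-formality of $E_d$ over $\FF_p$). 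Moreover, the associator set is a torsor under the pro-unipotent Grothendieck--Teichm\"uller group, not under the profinite/pro-$p$ group $GT_p$, so acting on an associator cannot produce a $GT_p$-action on $L_pE_d$ at all.

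The paper's proof replaces both of your cases by a single uniform device that is absent from your outline: Dunn--Lurie additivity. One starts from Drinfel'd's $GT_p$-action on the pro-$p$ completion $E_2^{\wedge}$ (this is where your genus-zero Galois/Ihara input correctly lives), writes $E_d^{\wedge}$ as a tensor product of $E_2^{\wedge}$ (with the Drinfel'd action) and $E_{d-2}^{\wedge}$ (with the \emph{trivial} action) using the additivity theorem of \cite{BWproduct}, and then applies the materialization functor $\RR\mathrm{Mat}$ to land on $L_pE_d$, using that $E_d(k)$ is nilpotent of finite type when $d\geq 3$. Because the $E_{d-2}$-factor is acted on trivially, the binary class in $H_{d-1}$ has weight $1+0=1$ for every $d\geq 3$, and the equivariance of $E_1\to L_pE_d$ is built into the construction rather than checked on fixed points. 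Note also a subtlety your $d=2$ discussion glosses over: the pro-$p$ \'etale homotopy type of $\mathrm{Conf}_n(\A^1)$ is $E_2(n)^{\wedge}$, which is \emph{not} identified with $L_pE_2(n)$ for $n\geq 3$, because pure braid groups are not nilpotent; this is precisely why the paper keeps the action at the pro-$p$ level for $E_2$ and only passes to $L_p$ in dimensions $d\geq 3$.
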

\begin{proof}
This is done in \cite{BHformality}. We recall briefly how this construction goes. For a space $X$, we denote by $X^{\wedge}$ its pro-$p$-completion. This is a pro-object in $p$-finite spaces (a space is $p$-finite if it has finitely many components and if each component is truncated with finite $p$-groups as homotopy groups) that receives a map from $X$ and is initial for this property. There is a close relationship between this functor and the $p$-completion functor $X\mapsto L_p(X)$: when $X$ is a nilpotent space of finite $\FF_p$-type (i.e. all its $\FF_p$-homology groups are finite dimensional) the $p$-completion $L_p(X)$ coincides with the homotopy limit of the pro-object $X^{\wedge}$. This homotopy limit is denoted $\RR\mathrm{Mat}$ in \cite{BHformality} and is the homotopy right adjoint of the pro-$p$-completion functor.

By work of Drinfel'd, there exists a non-trivial action of $GT_p$ on $E_2^{\wedge}$. Using a version of the Dunn-Lurie additivity theorem established in \cite{BWproduct} one can produce an action of $GT_p$ on $E_d^{\wedge}$ by writing it as a certain tensor product of $(E_2)^{\wedge}_{\FF_p}$ with the Drinfel'd action and $E_{d-2}^{\wedge}$ with the trivial action. Morever, with respect to this action, the map $E_1^{\wedge} \to E_d^{\wedge}$ is $GT_p$-equivariant.  Applying the functor $\RR\mathrm{Mat}$, we obtain the asserted $GT_p$-equivariant map $E_1\simeq L_pE_1\to L_pE_d$ using the fact that, for $d\geq 3$, the spaces $E_d(k)$ are nilpotent and of finite type. 

The statement that the action is cyclotomic of weight $1$ on $H_{d-1}(L_pE_{d}(2),\ZZ_p)$ follows from \cite[Proof of Theorem 7.1]{BHformality} where it is shown that the composite
\[GT_p\to\mathrm{Aut}^h(L_pE_d)\to\mathrm{Aut}^h(L_pE_d(2))\cong\ZZ_p^\times\]
coincides with the cyclotomic character.
\end{proof}

From this, we can fully understand the induced $GT_p$-action on homology.

\begin{prop}\label{prop: weight}
For any $k$, the $GT_p$-action on $H_{n(d-1)}(E_d(k),\ZZ_p)$ is cyclotomic of weight $n$.
\end{prop}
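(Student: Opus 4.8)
The plan is to exploit the fact that the $GT_p$-action is defined at the level of the operad $L_pE_d$, so that the induced action on homology is by \emph{operad} automorphisms; the weight of a homology class can then be read off from its expression in terms of operadic generators, using that cyclotomic weights are additive under operadic composition.

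First I would transfer the action to the homology of $E_d$ itself. The $p$-completion map $E_d(k)\to L_pE_d(k)$ is a mod-$p$ homology equivalence by construction, and since the spaces $E_d(k)$ are nilpotent and of finite type for $d\geq 3$, the argument of Proposition \ref{prop: p adics} shows it is in fact an isomorphism on $\ZZ_p$-homology. This identifies $H_*(E_d(k),\ZZ_p)$ with $H_*(L_pE_d(k),\ZZ_p)$ and thereby endows the former with a $GT_p$-action. Next I would invoke Cohen's computation of the homology of the little disks operad: as $k$ varies, the groups $H_*(E_d(-),\ZZ_p)$ assemble into the ($p$-completed) $d$-Poisson operad, which is generated as an operad by two arity-$2$ classes, a commutative product $m\in H_0(E_d(2),\ZZ_p)$ in degree $0$ and a bracket $\mu\in H_{d-1}(E_d(2),\ZZ_p)$ in degree $d-1$. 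Concretely, $H_*(E_d(k),\ZZ_p)$ has a $\ZZ_p$-basis of iterated operadic composites of copies of $m$ and $\mu$, and by a degree count a basis element lying in $H_{n(d-1)}(E_d(k),\ZZ_p)$ must involve exactly $n$ copies of the bracket $\mu$ (and arbitrarily many copies of $m$).

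The key observation is that, because $GT_p$ acts by operad automorphisms, the operadic composition maps on homology are $GT_p$-equivariant, and cyclotomic weights add under composition: if the inputs of a composition transform with weights $w_0,w_1,\dots,w_k$, then the source tensor product transforms diagonally by $\chi(\gamma)^{w_0+w_1+\cdots+w_k}$, so equivariance forces the composite to have weight $w_0+w_1+\cdots+w_k$. I would then compute the weights of the two generators. On $H_0(E_d(2),\ZZ_p)=\ZZ_p$ the action is necessarily trivial, since any homotopy automorphism of the connected space $E_d(2)$ fixes $H_0$; thus $m$ has weight $0$. On $H_{d-1}(E_d(2),\ZZ_p)$ the action is cyclotomic of weight $1$ by Theorem \ref{thm:GtonLpE}; thus $\mu$ has weight $1$. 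Combining these, every basis element of $H_{n(d-1)}(E_d(k),\ZZ_p)$ is an operadic composite of $n$ weight-$1$ brackets together with some weight-$0$ products, hence transforms by $\chi(\gamma)^n$. Since the action is $\ZZ_p$-linear and each basis vector is a weight-$n$ vector, the action on the whole module is cyclotomic of weight $n$, as claimed.

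The main obstacle is the bookkeeping in the generation step: one must be certain that the operadic composites genuinely span $H_{n(d-1)}(E_d(k),\ZZ_p)$ and that the degree-to-weight accounting is exact, i.e.\ that no homology class in this degree can be built using a different number of brackets. This is guaranteed by Cohen's explicit basis together with the fact that $m$ is concentrated in degree $0$ and $\mu$ in degree $d-1$, which rigidly forces the number of brackets appearing in a degree-$n(d-1)$ composite to equal $n$. Once this is in place, the equivariance of operadic composition reduces the entire statement to the additivity of cyclotomic weights, which is immediate.
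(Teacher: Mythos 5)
Your proposal is correct and follows essentially the same route as the paper: both arguments rest on the fact that $H_*(E_d,\ZZ_p)$ is the $d$-Poisson operad, generated in arity $2$ by the product (weight $0$) and the bracket (weight $1$, via Theorem \ref{thm:GtonLpE}), so that the degree count forces exactly $n$ brackets in degree $n(d-1)$ and additivity of weights gives the claim. The only cosmetic difference is that the paper packages the last step as ``$H_*(E_d(k),\ZZ_p)$ is a quotient of tensor powers of $H_*(E_d(2),\ZZ_p)$'' and invokes Proposition \ref{prop: submodule and quotient}, whereas you argue directly that the module is spanned by weight-$n$ eigenvectors (composites), and you also make explicit the identification $H_*(E_d(k),\ZZ_p)\cong H_*(L_pE_d(k),\ZZ_p)$ that the paper leaves implicit.
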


\begin{proof}
The homology of $E_d$ is the operad of $d$-Poisson algebras; it is generated by a commutative product of degree $0$, a Lie bracket of degree $(d-1)$, the latter being a derivation with respect to the former in both variables. Since this operad is generated by operations of arity $2$, it follows that the homology $H_*(E_d(k),\ZZ_p)$ is a quotient of tensor powers of $H_*(E_d(2),\ZZ_p)$. The statement of the proposition holds for $k=2$ by Theorem \ref{thm:GtonLpE} and the general case follows from Proposition \ref{prop: submodule and quotient}.
\end{proof}

\section{The Goodwillie-Weiss tower and $p$-completion}

\subsection{Generalities about $p$-completion of towers}
For a tower of based spaces and basepoint preserving maps,
\[
\dots \to T_n \to T_{n-1} \to \dots \to T_0
\]
there is an associated spectral sequence \cite[IX.4]{BousfieldKan} whose $E^1$ page is
\[
E^1_{-s,t}(T) = \pi_{t-s} \hofiber (T_s \to T_{s-1}) \;
\]
and the first differential $d^1 : E^1_{-s,t}(T) \to E^1_{-(s+1),t}(T)$ is the composite
\[
\pi_{t-s} \hofiber (T_s \to T_{s-1}) \to \pi_{t-s} T_s \to \pi_{t-s-1} \hofiber (T_{s+1} \to T_{s})
\]
where the second map is the boundary homomorphism. The differential $d^r$ has degree $(-r, r-1)$.

We assume that each $T_i$ has abelian homotopy groups (including $\pi_0$) and that the induced maps on homotopy groups are group homomorphisms (e.g. a tower of $2$-fold loop spaces). This is the case in our examples below, and having this hypothesis avoids subtleties which arise when defining this spectral sequence in general, when $\pi_0$ and $\pi_1$ are not abelian groups.

Let us briefly discuss convergence. For details, see \cite[IX]{BousfieldKan}. Let $F^s\pi_*(\mathrm{holim}T)$ be the image of the map $\pi_*(\mathrm{holim}T)\to \pi_* T_{s}$. Using the Milnor short exact sequence, we see that, up to a $\lim^1$-term, the group $\pi_* (\mathrm{holim}T)$ is the limit of the tower of epimorphisms
\begin{equation}\label{eq:towerFs}
\dots \to F^s\pi_*(\mathrm{holim}T) \to F^{s-1}\pi_*(\mathrm{holim}T) \to \dots \to F^{0}\pi_*(\mathrm{holim}T)
\end{equation}
(indeed the limit of this tower can easily be identified with $\lim_s \pi_* T_s$) and there are inclusions 
\[
\ker[ F^s\pi_*(\mathrm{holim}T) \to F^{s-1}\pi_*(\mathrm{holim}T) ] \subset E^{\infty}_{-s, s+*}(T) \; .
\]
For these inclusions to be isomorphisms, a sufficient condition is that 
\[
\lim^1_r E^{r}_{-s,t}
\]
vanishes \cite[IX.5.4]{BousfieldKan}. Under this hypothesis, the term $\lim^1_{s}\pi_* T_s$ also vanishes, and so the spectral sequence converges in the sense that the limit of the tower (\ref{eq:towerFs}) is $\pi_* (\mathrm{holim}T)$ and the associated graded of this tower is given by the $E^\infty$-page. The vanishing of $\lim^1_r E^{r}_{-s,t}$ happens, for example, if the connectivity of the layers of the tower $T$ goes to $\infty$ as $n \to \infty$. 

\medskip
Before we get to our main example, we need to discuss the effect of applying $p$-completion $L_p$ to a certain tower of spaces. Suppose $\{X_n\}_{n\in\mathbb{N}}$ is a tower of simply connected based spaces with finitely generated homotopy groups. Assume further that the layers are also simply connected and consider the corresponding tower $T = \{\Omega^2 X_n\}$ of two-fold loop spaces and maps. We wish to compare the tower $T$ with a tower $T(\ZZ_p)$
\[
\dots \to \Omega^2 L_p X_n \to \Omega^2 L_p X_{n-1} \to \dots \to \Omega^2 L_p X_0
\]
and an analogous spectral sequence whose $E^1$-page is 
\[
E^1_{-s,t}(T(\ZZ_p)) = 
\pi_{t-s} \Omega^2 \hofiber (L_p X_s \to L_p X_{s-1}) \; .
\]
A warning must be issued at this point: taking homotopy fibers does not commute with $L_p$ in general. However, under the simple connectivity assumption, we have that
\[
\hofiber (L_p X_s \to L_p X_{s-1}) \simeq L_p \hofiber (X_s \to X_{s-1}) \; .
\]
Therefore,
\begin{prop}\label{prop: complete E}
For each positive integer $r$, the map
\[
E^r_{-s,t}(T) \to E^r_{-s,t}(T(\ZZ_p))\;
\]
is given by algebraic $p$-completion, i.e., tensoring with $\ZZ_p$.
\end{prop}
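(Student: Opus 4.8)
The plan is to establish the statement at the level of exact couples and to reduce everything to the exactness (flatness) of $-\otimes\ZZ_p$. The $p$-completion maps $X_n\to L_pX_n$ assemble into a map of towers $T\to T(\ZZ_p)$, hence into a morphism between the exact couples of homotopy groups that generate the two spectral sequences. Concretely, the source couple has $D$-terms $\pi_*\Omega^2X_s$ and $E$-terms $\pi_*\Omega^2\hofiber(X_s\to X_{s-1})$, the target couple has $D$-terms $\pi_*\Omega^2 L_pX_s$ and $E$-terms $\pi_*\Omega^2\hofiber(L_pX_s\to L_pX_{s-1})$, and every structure map is induced by a map of spaces; on the $E^1$-page the comparison map is the one already described in the text.

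First I would identify the target couple with the source couple tensored with $\ZZ_p$. Using the equivalence $\hofiber(L_pX_s\to L_pX_{s-1})\simeq L_p\hofiber(X_s\to X_{s-1})$ recalled above, together with the Proposition that $X\to L_pX$ induces $\pi_*(X)\to\pi_*(X)\otimes\ZZ_p$ on the simply connected, finitely generated spaces at hand, each $D$- and $E$-term of the target couple is canonically the corresponding term of the source couple tensored with $\ZZ_p$, the comparison map being the algebraic completion $a\mapsto a\otimes1$. The point that needs care is that the structure maps of the target couple are those of the source couple tensored with $\ZZ_p$: naturality of $\id\to L_p$ gives a commuting square, but upgrading agreement on the image of $a\mapsto a\otimes1$ to genuine equality requires these induced maps to be $\ZZ_p$-linear. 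This is where I would invoke the fact that the homotopy groups of $p$-complete nilpotent spaces are naturally $\ZZ_p$-modules and that maps of such spaces induce $\ZZ_p$-linear maps (Bousfield--Kan); equivalently, the induced maps are continuous for the $p$-adic topology and so are determined by their values on the dense image of $\otimes1$.

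Finally I would feed this into the formal machinery. Since $\ZZ_p$ is torsion-free it is flat over $\ZZ$, so $-\otimes\ZZ_p$ is exact; an exact functor sends an exact couple to an exact couple and commutes with the formation of the derived couple, because it preserves the images and the subquotient homology that define the derived $D$- and $E$-terms. By induction on $r$ it follows that the $r$-th derived couple of the target is obtained from that of the source by applying $-\otimes\ZZ_p$; in particular $E^r_{-s,t}(T(\ZZ_p))\cong E^r_{-s,t}(T)\otimes\ZZ_p$ with the comparison map equal to algebraic $p$-completion, which is the assertion. I expect the only genuine obstacle to be the $\ZZ_p$-linearity point in the previous paragraph; once the morphism of exact couples is known to be $-\otimes\ZZ_p$, the conclusion is pure homological algebra driven by flatness.
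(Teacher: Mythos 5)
Your proof is correct and follows essentially the same route as the paper: identify the first page (in your case, the whole initial exact couple) using $\hofiber(L_pX_s\to L_pX_{s-1})\simeq L_p\hofiber(X_s\to X_{s-1})$ together with the fact that $\pi_*Y\to\pi_*L_pY$ is $-\otimes\ZZ_p$ for simply connected finite-type $Y$, and then propagate to all pages by exactness of $-\otimes\ZZ_p$. The one point where you go beyond the paper's one-line induction -- verifying that the differentials/structure maps on the completed side really are the $\ZZ_p$-linear extensions of the original ones, which you settle by the automatic $\ZZ_p$-linearity (equivalently, $p$-adic continuity) of additive maps between finitely generated $\ZZ_p$-modules -- is a legitimate gap-filling refinement of the same argument rather than a different approach, and you resolve it correctly.
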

\begin{proof}
For $r = 1$, this follows from what was just said and the fact that if a simply connected space $Y$ has homotopy groups that are finitely generated abelian, then the map 
\[
\pi_* Y \to \pi_* L_p Y
\] 
is a $p$-completion, in the sense that it is isomorphic to the canonical map
\[\pi_* Y \to \pi_* Y \otimes \ZZ_p \; .\]
For a finite $r$, the result follows by the exactness the functor $A\mapsto A\otimes\ZZ_p$.
\end{proof}

Now, suppose that we are in a situation for which there is convergence of the spectral sequence of the tower $T$, e.g. if the connectivity of the layers of $T$ grows to $\infty$. Then we have a tower approximating $\pi_*(\mathrm{holim}T(\ZZ_p))$
\begin{equation}\label{eq:towerFps}
\dots \to F_p^s \to F_p^{s-1} \to \dots \to F_p^{0}
\end{equation}
where $F_p^s$ is the image of $\pi_* (\mathrm{holim}T(\ZZ_p)) \to \pi_* T(\ZZ_p)_s$, 
and satisfying
\[
E^{\infty}_{-s, s+*}(T(\ZZ_p)) \cong \ker( F_p^s \to F_p^{s-1} ) \;.
\]
This is related to the tower $\{F^s\}$ which approximates $\pi_*(\mathrm{holim} T)$. Namely, the tower $(\ref{eq:towerFps})$ is a $p$-completion of the tower $(\ref{eq:towerFs})$.

\begin{rem}
Without convergence hypothesis, the tower $(\ref{eq:towerFps})$ may \emph{not} be a $p$-completion of the tower $(\ref{eq:towerFs})$. This is because 
 $p$-completion can fail to commute with infinite limits. For the same reason, the map
\[\pi_*(\mathrm{holim}T) \to \pi_* (\mathrm{holim}T(\ZZ_p))\]
may not be a $p$-completion. 
\end{rem}

\subsection{The Goodwillie-Weiss tower for long knots}

Let $d \geq 3$. The Goodwillie-Weiss tower approximating the space of long knots $\emb_c(\RR^1, \RR^d)$
\[
\dots \to T_k \emb_c(\RR^1, \RR^d) \to \dots \to T_2 \emb_c(\RR^1, \RR^d)  
\]
is a tower of $2$-fold loop spaces. We briefly explain this here, following \cite{BoavidaWeiss}. The $k$-th term, $k \geq 2$, can be described via a homotopy fiber sequence 
\[
 T_k \emb_c(\RR^1, \RR^d) \to \Omega \textup{InjLin}(\RR^1, \RR^d) \xrightarrow{\Omega f} \Omega \map^h(E_1, E_d)_{\leq k}
\]
where $\textup{InjLin}$ denotes the space of injective linear maps, and $\map^h(E_1, E_d)_{\leq k}$ denotes the space of derived maps between $k$-truncated operads. (That is, we restrict the operads to operations with at most $k$ inputs.)
The map $f$ is essentially an evaluation map (see \cite{BoavidaWeiss} for a precise description using configuration categories).

The space $\textup{InjLin}(\RR^1, \RR^d)$ is weakly equivalent to $\map^h(E_1, E_d)_{\leq 2}$ and with that in mind it is not hard to see that $f$ is a section of the truncation map
\[
g_k : \map^h(E_1, E_d)_{\leq k} \to \map^h(E_1, E_d)_{\leq 2} \; .
\]
Therefore, we have a weak equivalence
\[T_k \emb_c(\RR^1, \RR^d) \simeq \Omega^2 \hofiber(g_k).\]
Moreover, the map 
\[T_{k+1} \emb_c(\RR^1, \RR^d)\to T_k \emb_c(\RR^1, \RR^d)\]
can be identified with the double loop map of the obvious map 
\[\hofiber(g_{k+1})\to \hofiber(g_k)\;.\]
In order to simplify notation, we denote the tower $k \mapsto \Omega^2 \hofiber(g_k)$ by $T$.

\medskip
For a positive integer $n$, we write $\uli{n}$ for the finite set $\{1,\ldots,n\}$. The first page of the spectral sequence associated to $T$ has the following description: whenever $s > 2$, we have
\[
E^1_{-s,t}(T) = \pi_{t} \textup{thofiber} \left (S \subset \uli{s} \mapsto \emb(S, \RR^d) \right )
\]
where $\textup{thofiber}$ means the total homotopy fiber of the cube ; when $s\leq 2$, we have $E^1_{-s,t}(T)=0$. This identification follows from Göppl's thesis:

\begin{prop}\label{prop:flo}
Let $P$ be an operad having weakly contractible $P(0)$ and $P(1)$. Let us fix a map $E_1\to P$ that we use as a base point for each of the spaces $\map^h(E_1, P)_{\leq n}$. For $n \geq 2$, there is a homotopy fiber sequence
\[
\Omega^{n-2} \textup{thofiber} \left (S \subset \uli{n} \mapsto P(S) \right) \to \map^h(E_1, P)_{\leq n} \to \map^h(E_1, P)_{\leq n-1} 
\]
where the $n$-cube $S \mapsto P(S)$ is given by compositions with $0$-arity operations. 
\end{prop}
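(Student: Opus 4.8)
The plan is to realise the truncation map as a restriction of sections along a skeletal inclusion and to compute its homotopy fibre as the space of coherent extensions over the ``new'' arity-$n$ cell. Following the Boavida--Weiss picture, I would model $\map^h(E_1,P)_{\leq n}$ as the derived space of maps of $n$-truncated operads, presented by dendroidal spaces (or configuration categories) that satisfy the Segal condition and are restricted to trees with at most $n$ leaves. In this model the tower in $n$ is the skeletal filtration: passing from the $(n-1)$-truncation to the $n$-truncation adjoins exactly the $n$-leaf corolla $C_n$, since every tree all of whose vertices have arity $<n$ is already present. Consequently, the homotopy fibre of the restriction map over a fixed $(n-1)$-truncated map is the space of homotopy-coherent extensions across $C_n$, that is, a relative mapping space out of the corolla relative to its boundary $\partial C_n$ (the subobject generated by the lower corollas together with the unit and the nullary operation).

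First I would pin down the target into which this relative cell maps. By the Segal condition, the value of any extension on $\partial C_n$ is determined by the fixed $(n-1)$-truncated map together with the composition and partial-unit structure in arities $<n$. The ways of producing an $n$-input operation from fewer-input ones by capping off inputs with the nullary operation assemble into the cube $S \subseteq \uli{n} \mapsto P(S)$, whose structure maps are precisely the compositions with $0$-arity operations. Here the hypotheses $P(0)\simeq\pt$ and $P(1)\simeq\pt$ are used to guarantee that the capping maps and the units are homotopically unambiguous, so that the boundary contributes no extra indecomposable data. The genuinely new target directions — those not already seen by any proper restriction — are therefore governed by the total homotopy fibre
\[
\textup{thofiber}\left(S \subseteq \uli{n} \mapsto P(S)\right) \; .
\]

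Next I would account for the loop-space shift, which comes from the source. For the derived mapping space I would use a cofibrant model of $E_1$ whose arity-$n$ space is equivalent to $\Sigma_n \times K_n$, where $K_n$ is the Stasheff associahedron; its relative latching cell at arity $n$ is $(K_n,\partial K_n)\times\Sigma_n$, with $\partial K_n$ indexing exactly the iterated compositions of operations of arity $<n$, which matches the boundary data analysed above. The chosen basepoint $E_1\to P$ picks out one of the $n!$ orderings, so equivariance reduces the extension problem to a single associahedral cell; since $\dim K_n = n-2$ and $K_n/\partial K_n \simeq S^{n-2}$, mapping this relative cell into the total homotopy fibre produces a based mapping space out of $S^{n-2}$ and hence the fibre
\[
\Omega^{n-2}\,\textup{thofiber}\left(S \subseteq \uli{n} \mapsto P(S)\right) \; ,
\]
as claimed. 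As a consistency check, feeding $P=E_d$ into this formula and combining with the external $\Omega^2$ in $T_k=\Omega^2\hofiber(g_k)$ reproduces the expected layer $\Omega^k\,\textup{thofiber}(S\subseteq\uli{k}\mapsto\emb(S,\RR^d))$, and therefore the $E^1$-identification stated just after the proposition.

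I expect the main obstacle to be the coherence bookkeeping underlying the first two steps: rigorously matching the combinatorial boundary $\partial C_n$ (equivalently $\partial K_n$) with the cube of capping maps, and verifying that the Segal condition together with $P(0)\simeq P(1)\simeq\pt$ forces the lower-arity data to determine the \emph{entire} boundary, so that the relative value is precisely the total homotopy fibre rather than a larger matching object. Separating the associativity coherences (which yield the $(n-2)$-dimensional associahedral cell, hence $\Omega^{n-2}$) from the partial-unit coherences (which yield the capping cube) is the delicate point. This is exactly where I would lean on Göppl's analysis of the skeletal filtration of operad mapping spaces, and on the Boavida--Weiss configuration-category model, for the homotopy-coherent details rather than reconstruct them here.
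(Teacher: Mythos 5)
Your outline is correct and is essentially the argument of G\"oppl that the paper cites without reproving (the paper contains no internal proof of this proposition, so the cited source is the only available comparison): an arity--skeletal induction in which a cofibrant ($W$-construction) model of $E_1$ has latching pair $(K_n,\partial K_n)\times\Sigma_n$, the relevant homotopy matching object of $P$ is built from the punctured cube of nullary cappings -- so the fiber of $P(n)\to M^h_nP$ is exactly $\textup{thofiber}\left(S\subset\uli{n}\mapsto P(S)\right)$ -- and the space of $\Sigma_n$-equivariant lifts relative to the latching data is the space of based maps from $K_n/\partial K_n\cong S^{n-2}$ into that total homotopy fiber, i.e.\ $\Omega^{n-2}\,\textup{thofiber}$. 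Your separation of the associativity coherences (the associahedral cell, hence the loops) from the unit/capping coherences (the cube), and your consistency check against the stated $E^1$-identification, both match the structure of the cited proof, so there is nothing to flag beyond the technical bookkeeping you have explicitly deferred to G\"oppl, exactly as the paper itself does.
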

\begin{proof}
The main result of Göppl's thesis, the first theorem on p. 5 in {\cite[First Theorem, p. 5]{goppl}}, identifies the homotopy fiber of the restriction map 
\[
\map^h(E_1, P)_{\leq n} \to \map^h(E_1, P)_{\leq n-1}
\]
with the space of $\Sigma_n$-equivariant maps $E_1(n) \to P(n)$ extending the solid diagram of $\Sigma_n$-equivariant maps
\begin{equation*}
	\begin{tikzpicture}[descr/.style={fill=white}, baseline=(current bounding box.base)]
	\matrix(m)[matrix of math nodes, row sep=2.5em, column sep=2.5em,
	text height=1.5ex, text depth=0.5ex]
	{
	\partial E_1(n) & \partial P(n)	\\
	E_1(n) & P(n) \\
	\holimsub{S \subsetneq \uli{n}} E_1(S) & \holimsub{S \subsetneq \uli{n}} P(S) \\
	};
	\path[->,font=\scriptsize]
		(m-1-1) edge node [auto] {} (m-1-2)
		(m-1-1) edge node [auto] {} (m-2-1)
		(m-1-2) edge node [auto] {} (m-2-2)
		(m-2-1) edge node [auto] {} (m-3-1)
		(m-2-2) edge node [auto] {} (m-3-2)
		(m-3-1) edge node [auto] {} (m-3-2);
			\path[dashed,->,font=\scriptsize]
					(m-2-1) edge node [auto] {} (m-2-2);
	\end{tikzpicture}
\end{equation*}
where $\partial P(n)$ is the \emph{boundary} of $P(n)$, a certain homotopy colimit parametrized by trees  with exactly $n$ leaves and whose vertices have at most $n-1$ incoming edges (see \cite[2.1.12]{goppl} for a precise definiton). The horizontal maps are determined by the choice of basepoint. For the case of $E_1$, the inclusion $\partial E_1(n) \hookrightarrow E_1(n)$ is modelled by $\partial D^{n-2} \times \Sigma_{n} \hookrightarrow D^{n-2} \times \Sigma_n$ for $n \geq 2$, as in the inclusion of the boundary in (the symmetrization of) Stasheff's polyhedron.
\end{proof}

The homotopy groups of this total homotopy fiber are well-understood. We record this in the following two elementary and well-known statements.

\begin{lem}\label{lem:thofiber of split cube}
Let $X$ and $Y$ be two $(n-1)$ dimensional cubes. Let $p:X\to Y$ and $s:Y\to X$ be two maps of cubes with $s$ a homotopy section of $p$ (i.e. $p\circ s$ is homotopic to the identity). Then, for any $k\geq 1$, if we denote by $C$ the $n$-dimensional cube given by the map $p$, we have
\[\pi_k(\textup{thofiber}(C))\cong\ker\left( \pi_k(\textup{thofiber}(X))\to \pi_k(\textup{thofiber}(Y))\right)\]
where the map on the right hand side is the map induced by $p$.
\end{lem}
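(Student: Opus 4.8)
The plan is to reduce the statement to the long exact sequence of a homotopy fibration, using the inductive description of the total homotopy fiber of a cube as an iterated homotopy fiber.

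First I would recall that an $n$-dimensional cube may be regarded as a map between two $(n-1)$-dimensional cubes, and that, by definition, its total homotopy fiber is the homotopy fiber of the induced map between their total homotopy fibers. Applied to the cube $C$ determined by $p \colon X \to Y$, this yields a homotopy fiber sequence
\[
\textup{thofiber}(C) \to \textup{thofiber}(X) \xrightarrow{\; \bar p \;} \textup{thofiber}(Y),
\]
where $\bar p$ is the map induced by $p$.

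Next I would use functoriality of the total homotopy fiber construction: the homotopy section $s$ induces a map $\bar s \colon \textup{thofiber}(Y) \to \textup{thofiber}(X)$ with $\bar p \circ \bar s \simeq \id$, so $\bar s$ is a homotopy section of $\bar p$. Consequently the induced map $\bar p_* \colon \pi_k(\textup{thofiber}(X)) \to \pi_k(\textup{thofiber}(Y))$ is split surjective for every $k \geq 0$. Feeding this into the long exact sequence of homotopy groups of the fibration above, surjectivity of $\bar p_*$ on $\pi_k$ forces the outgoing connecting map $\pi_k(\textup{thofiber}(Y)) \to \pi_{k-1}(\textup{thofiber}(C))$ to vanish, while surjectivity on $\pi_{k+1}$ forces the incoming connecting map $\pi_{k+1}(\textup{thofiber}(Y)) \to \pi_k(\textup{thofiber}(C))$ to vanish. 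Hence, for $k \geq 1$, the long exact sequence breaks into short exact sequences
\[
0 \to \pi_k(\textup{thofiber}(C)) \to \pi_k(\textup{thofiber}(X)) \xrightarrow{\; \bar p_* \;} \pi_k(\textup{thofiber}(Y)) \to 0,
\]
which identifies $\pi_k(\textup{thofiber}(C))$ with $\ker(\bar p_*)$, as claimed.

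I do not expect a genuine obstacle here: the entire content is the splitting of the long exact sequence forced by the section, and the hypothesis $k \geq 1$ is precisely what guarantees that all the relevant homotopy groups are groups and that the notion of kernel behaves well (at $k = 1$ one still needs surjectivity at level $2$ to obtain injectivity of $\pi_1(\textup{thofiber}(C)) \to \pi_1(\textup{thofiber}(X))$, which the section supplies). The only point requiring a little care is setting up the inductive definition of $\textup{thofiber}$ and verifying its functoriality in maps of cubes, so that $s$ genuinely produces a homotopy section $\bar s$ of $\bar p$ on total homotopy fibers.
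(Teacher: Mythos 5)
Your proposal is correct and follows exactly the paper's argument: identify $\textup{thofiber}(C)$ as the homotopy fiber of the induced map $\textup{thofiber}(X)\to\textup{thofiber}(Y)$, then use the section $s$ to split the long exact sequence into short exact sequences, identifying $\pi_k(\textup{thofiber}(C))$ with the kernel. The paper's proof is just a terser version of this, leaving implicit the functoriality of $\textup{thofiber}$ and the vanishing of the connecting maps that you spell out.
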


\begin{proof}
In general, there is a fiber sequence
\[\textup{thofiber}(C)\to \textup{thofiber}(X)\xrightarrow{p} \textup{thofiber}(Y)\]
and the existence of the map $s$ implies that the long exact sequence of homotopy groups splits into short exact sequences:
\[0\to\pi_k(\textup{thofiber}(C))\to \pi_k(\textup{thofiber}(X))\to\pi_k(\textup{thofiber}(Y))\to 0 \; \]
as claimed.
\end{proof}

\begin{prop}\label{prop : thof}
Under the hypothesis of proposition \ref{prop:flo} we have 
\[\pi_*\textup{thofiber} \left (S \subset \uli{n} \mapsto P(S) \right)\cong\bigcap_{i=0}^{n-1}\left(\ker\pi_*(s^i):\pi_*P(n)\to\pi_*P(n-1)\right)\]
where the maps $s^i:P(n)\to P(n-1)$ are the $n$ edges of the cube originating from $P(n)$.
\end{prop}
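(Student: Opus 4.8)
The plan is to induct on $n$ using Lemma \ref{lem:thofiber of split cube}. To make the induction run smoothly I would first prove the slightly more general statement in which one allows a fixed finite set $A$ of ``frozen'' inputs: for the $n$-cube $\mathcal{C}^A_n \colon S \subseteq \uli n \mapsto P(S \sqcup A)$, whose structure maps are composition with the (up to the contractibility of $P(0)$ essentially unique) nullary operation $\eta \in P(0)$, one has
\[
\pi_* \textup{thofiber}(\mathcal{C}^A_n) \cong \bigcap_{i=1}^{n}\ker\left(\delta_i \colon \pi_* P(\uli n \sqcup A) \to \pi_* P((\uli n \smin i) \sqcup A)\right),
\]
where $\delta_i$ (the map denoted $s^i$ in the statement) plugs $\eta$ into input $i$. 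The case $A = \emptyset$ is the proposition. The base case $n = 0$ is immediate, since a $0$-cube is its own total homotopy fiber and the empty intersection is the whole group.

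For the inductive step I would split $\mathcal{C}^A_n$ along the last coordinate into a map of $(n-1)$-cubes $p = \delta_n \colon X \to Y$, where $Y$ is the subcube on which input $n$ is absent (this is exactly $\mathcal{C}^A_{n-1}$) and $X$ is the subcube on which input $n$ is present (this is $\mathcal{C}^{A \sqcup \{n\}}_{n-1}$, i.e. the same type of cube with $n$ adjoined to the frozen set, its top vertex being $P(\uli n \sqcup A)$). The inductive hypothesis applied to $X$ and $Y$ identifies $\pi_* \textup{thofiber}(X)$ and $\pi_* \textup{thofiber}(Y)$ with the intersections of the kernels of $\delta_i$ for $i \in \uli{n-1}$, over $P(\uli n \sqcup A)$ and $P(\uli{n-1} \sqcup A)$ respectively. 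Lemma \ref{lem:thofiber of split cube} then gives $\pi_* \textup{thofiber}(\mathcal{C}^A_n) \cong \ker(\pi_*\textup{thofiber}(X) \xrightarrow{\delta_n} \pi_*\textup{thofiber}(Y))$. Since plugging $\eta$ into two distinct slots commutes, $\delta_n$ carries $\bigcap_{i < n}\ker \delta_i$ into itself, and the kernel of the restricted map is precisely $\bigcap_{i \le n}\ker\delta_i$, as required.

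The one hypothesis of Lemma \ref{lem:thofiber of split cube} that needs work is the existence of a homotopy section $s$ of $p = \delta_n$, and this is where I expect the only real content to lie. I would build it from the operad structure. Fix a binary operation $\beta \in P(2)$ (available since the chosen map $E_1 \to P$ makes every arity nonempty) and define $s_S \colon P(S \sqcup A) \to P(S \sqcup A \sqcup\{n\})$ by $\mu \mapsto \beta(\mu, -)$, that is, composition of $\mu$ into the first input of $\beta$ with the second leaf of $\beta$ relabelled $n$. This is natural in $S$ with respect to the cube maps $\delta_i$, $i \in \uli{n-1}$, because plugging $\eta$ into a leaf of $\mu$ commutes with post-composition by $\beta$. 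For the section identity, observe that $\delta_n \circ s_S$ sends $\mu$ to $\beta(\mu, \eta) = u \circ \mu$, where $u := \beta(-, \eta) \in P(1)$; since $P(1)$ is weakly contractible, $u$ is homotopic to the operad unit and hence $\delta_n \circ s_S \simeq \id$. The low vertices, where the source is $P(0)$, are inessential because $P(0)$ is contractible.

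The main obstacle is thus precisely the construction and verification of this natural section; once it is in place the rest is a formal induction feeding Lemma \ref{lem:thofiber of split cube}. I would also emphasize that the standing contractibility hypotheses inherited from Proposition \ref{prop:flo} enter exactly here: $P(0)$ makes the structure maps available and renders the bottom vertices inessential, while $P(1)$ is what forces the section identity and hence the splitting into kernels.
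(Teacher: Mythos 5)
Your proof is correct and is essentially the paper's own argument: the paper likewise splits the $n$-cube along the last coordinate into a map of $(n-1)$-cubes $X \to Y$, builds a homotopy section of that map from the image in $P(2)$ of a point of $E_1(2)$ (the contractibility of $P(1)$ giving the section identity), applies Lemma \ref{lem:thofiber of split cube}, and iterates. Your ``frozen set'' $A$ is just a clean formalization of the paper's terse ``iterating this reasoning $(n-1)$ more times'', since the subcube $X$ is of the same type only after allowing a frozen input.
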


\begin{proof}
Let us call $C$ the cube under consideration. Let $z$ be any point of the space $P(0)$ (the choice does not matter since this space is contractible). The map $s^i$ is then defined as operadic composition with $z$ in the $(i+1)$-st input. The cube $C$ can be viewed as a map of $(n-1)$-cubes $X\to Y$ with $X(S)=P(S\cup \{n\})$ and $Y(S)=P(S)$ for $S\subset \uli{n-1}$. The map from $X$ to $Y$ is induced by operadic composition with $z$ in the input labelled $n$.

Recall that we have chosen a map $E_1\to P$. Let $x$ in $P(2)$ be the image of any point in $E_1(2)$ (again the choice is irrelevant). Using operadic composition with $x$ we obtain a homotopy section $Y\to X$ as in the previous lemma. By the previous lemma, we deduce that $\pi_*\textup{thofiber}(C)$ is the kernel of the map induced by $s^{n-1}$ on $\pi_*\textup{thofiber}(X)$. Iterating this reasoning $(n-1)$ more times, we obtain the desired result.
\end{proof}

We now fix a prime $p$. As in the discussion in the beginning of the section, we want to investigate the tower
\[
\dots \to \Omega^2 L_p \hofiber(g_k) \to \Omega^2 L_p \hofiber(g_{k-1}) \to \dots
\]
In line with the notation of the previous subsection, we denote this tower by $T(\ZZ_p)$. We also introduce another tower -- denoted $T'(\ZZ_p)$ -- of the form
\[
\dots \to \Omega^2 \hofiber(g'_k) \to \Omega^2 \hofiber(g'_{k-1}) \to \dots
\]
where $g'_k$ is the truncation map for the operad $L_pE_d$ :
\[
g'_k : \map^h(E_1, L_pE_d)_{\leq k} \to \map^h(E_1, L_pE_d)_{\leq 2} \; .
\]

\begin{lem}\label{lem: equiv towers}
The canonical map $T(\ZZ_p)\to T'(\ZZ_p)$ is a weak equivalence of towers. As such, it induces an isomorphism of the associated spectral sequences.
\end{lem}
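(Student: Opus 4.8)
The plan is to prove that the comparison is a \emph{levelwise} weak equivalence, i.e. that for each $k$ the canonical map
\[
\Omega^2 L_p \hofiber(g_k) \to \Omega^2 \hofiber(g'_k)
\]
is a weak equivalence, and that these are compatible with the structure maps of the two towers. Since $\Omega^2$ preserves weak equivalences and commutes with the constructions involved, it suffices to produce a natural weak equivalence $L_p\hofiber(g_k)\simeq \hofiber(g'_k)$. I would first observe that $\hofiber(g'_k)$ is already $p$-complete: the spaces $\map^h(E_1,L_pE_d)_{\leq k}$ are homotopy limits of (products of) the $p$-complete spaces $L_pE_d(j)$, hence $p$-complete by Proposition \ref{prop: completion preserves products} together with the closure of local objects under homotopy limits, and a homotopy fiber is again a homotopy limit. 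Consequently the natural map $\hofiber(g_k)\to\hofiber(g'_k)$ induced by $E_1\to E_d\to L_pE_d$ factors through the $p$-completion, yielding the canonical comparison $L_p\hofiber(g_k)\to\hofiber(g'_k)$; it is this map that I must show is an equivalence.

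The argument is by induction on $k$ using the fiber sequences of Proposition \ref{prop:flo}. For $k=2$ both homotopy fibers are contractible and there is nothing to check. For the inductive step, Göppl's proposition applied to $P=E_d$ and to $P=L_pE_d$ produces the two fiber sequences
\[
\Omega^{k-2}\textup{thofiber}(S\subset\uli k\mapsto E_d(S)) \to \hofiber(g_k)\to\hofiber(g_{k-1})
\]
\[
\Omega^{k-2}\textup{thofiber}(S\subset\uli k\mapsto L_pE_d(S)) \to \hofiber(g'_k)\to\hofiber(g'_{k-1})
\]
together with a natural map between them induced by $E_d\to L_pE_d$. Applying $L_p$ to the first sequence and comparing with the second, the proof reduces to two claims: that $L_p$ carries the top layer to the bottom layer, and that $L_p$ preserves the top fiber sequence; for then the outer vertical maps become equivalences (the left one by the layer claim, the right one by the inductive hypothesis), and the five lemma forces $L_p\hofiber(g_k)\to\hofiber(g'_k)$ to be an equivalence.

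Both claims rest on the same input, which is the genuine content of the lemma: that $L_p$ commutes with the finite homotopy limits at hand. The entries $E_d(j)\simeq \mathrm{Conf}(j,\RR^d)$ are simply connected for $d\geq 3$, and by Proposition \ref{prop : thof} the homotopy groups of every total homotopy fiber occurring, for the full cube and for each of its sub-cubes, are subgroups of the $\pi_* E_d(n)$; hence all these total homotopy fibers are simply connected, and a standard connectivity estimate shows they are in fact $(k-1)$-connected for $d\geq 3$. Writing the total homotopy fiber as an iterated homotopy fiber via Lemma \ref{lem:thofiber of split cube} and using that $L_p$ preserves homotopy fibers with simply connected base (as recalled above), one obtains $L_p\textup{thofiber}(S\subset\uli k\mapsto E_d(S))\simeq\textup{thofiber}(S\subset\uli k\mapsto L_pE_d(S))$, the right-hand side being the correct layer because $L_pE_d$ is obtained from $E_d$ by applying $L_p$ objectwise. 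The same connectivity lets $L_p$ commute with $\Omega^{k-2}$, which proves the first claim; the second claim holds because the base $\hofiber(g_{k-1})$ is simply connected, so $L_p$ preserves its fiber sequence.

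Finally, all the equivalences so produced are natural in $k$ and hence assemble into a weak equivalence of towers $T(\ZZ_p)\to T'(\ZZ_p)$; applying $\Omega^2$ and invoking the functoriality of the Bousfield--Kan construction, a levelwise weak equivalence of towers induces an isomorphism on $E^1$-pages and therefore on every page of the associated spectral sequences. I expect the main obstacle to be the verification that \emph{all} the spaces and intermediate homotopy fibers entering the construction are simply connected, equivalently nilpotent, so that $p$-completion may be interchanged with the homotopy fibers, total homotopy fibers and loop functors; this is precisely where the hypothesis $d\geq 3$ is essential, and Proposition \ref{prop : thof} is the tool that makes the required simple connectivity transparent.
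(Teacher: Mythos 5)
Your proof is correct, and its skeleton --- compare the two towers layer by layer via G\"oppl's fiber sequence (Proposition \ref{prop:flo}), then climb the tower --- is the same as the paper's; the difference lies in how the layers are matched. You work at the space level: using simple connectivity of all sub-cube total homotopy fibers (which you extract from Proposition \ref{prop : thof}) you commute $L_p$ past the total homotopy fiber of $S\mapsto E_d(S)$ and past $\Omega^{k-2}$, so that each layer of $T'(\ZZ_p)$ is exhibited as the genuine $p$-completion of the corresponding layer of $T$, and you then run an explicit five-lemma induction up the tower. The paper instead stays at the level of homotopy groups: it applies Proposition \ref{prop : thof} to \emph{both} operads $E_d$ and $L_pE_d$, so that the layers' homotopy groups become intersections of kernels inside $\pi_*E_d(n)$ and inside $\pi_*E_d(n)\otimes\ZZ_p$ respectively, and these are matched by exactness of $-\otimes\ZZ_p$ (as $\ZZ_p$ is torsion-free); the only completion-versus-homotopy-limit fact it needs is the single commutation $\hofiber(L_pX\to L_pY)\simeq L_p\hofiber(X\to Y)$ over a simply connected base, already packaged into Proposition \ref{prop: complete E}. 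Your route buys a stronger, space-level identification of the layers and makes the induction up the tower explicit (the paper leaves it implicit in ``we compare the layers''); the paper's route is shorter, avoiding the bookkeeping of checking that every intermediate iterated fiber is simply connected by replacing it with a one-line algebraic exactness argument. Both rest on the same underlying inputs, so this is a real but modest difference of implementation rather than of strategy.
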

\begin{proof}
We compare the layers. According to Propositions \ref{prop:flo} and \ref{prop : thof}, the $n^{th}$ layer of $T^\prime(\ZZ_p)$ has homotopy groups
 \[
\bigcap_{i=0}^{n-1}\left(\ker\pi_*(s^i): \pi_*E_d(n) \otimes\ZZ_p \to \pi_*E_d(n-1)\otimes\ZZ_p\right) \; .
 \]

By Proposition \ref{prop: complete E}, the homotopy groups of the layers of $T(\ZZ_p)$ are the $p$-completion of the homotopy groups of the layers of $T$. The homotopy groups of the layers of $T$ are also identified using Propositions \ref{prop:flo} and \ref{prop : thof}. Taking $p$-completion amounts to tensoring the homotopy groups with $\ZZ_p$, which is an exact functor since $\ZZ_p$ is torsion-free, and so the result follows.
\end{proof}

The important corollary of the above discussion is the following.

\begin{cor}\label{cor:action on tower}
The tower $T(\ZZ_p)$ has an action of $GT_p$. Therefore, the spectral sequence $E^r_{-s,t}(T(\ZZ_p))$ has an action of $GT_p$.
\end{cor}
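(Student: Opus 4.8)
The plan is to construct the $GT_p$-action directly on the tower $T'(\ZZ_p)$ by propagating the action on the operad $L_pE_d$ through the functorial constructions that define the tower, and then to transport the outcome to the spectral sequence. The essential input is Theorem \ref{thm:GtonLpE}, which provides a $GT_p$-action on the $\infty$-operad $L_pE_d$ together with the equivariance of the structure map $E_1\to L_pE_d$ when $E_1$ is given the trivial action. Everything downstream of this is formal.

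First I would note that postcomposition turns the derived mapping space $\map^h(E_1, L_pE_d)$ into a $GT_p$-space. Since the action is by automorphisms of the $\infty$-operad $L_pE_d$, it is compatible with restriction to operations of arity $\leq k$, so it descends to each truncation $\map^h(E_1, L_pE_d)_{\leq k}$ and commutes with the truncation maps; in particular every $g'_k$ becomes $GT_p$-equivariant, as do the comparison maps between truncations. The equivariance of the base point — the fixed map $E_1\to L_pE_d$, which Theorem \ref{thm:GtonLpE} guarantees is preserved because $E_1$ carries the trivial action — is precisely what is needed for the based constructions that follow to be equivariant.

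Next I would push the action through $\hofiber$ and $\Omega^2$. As both are homotopy-functorial and base-point preserving, each layer $\Omega^2\hofiber(g'_k)$ inherits a $GT_p$-action, compatibly with the structure maps of the tower. This assembles into a homomorphism $GT_p\to\aut^h(T'(\ZZ_p))$ landing in the homotopy automorphisms of the tower, which is the asserted action. Finally, the Bousfield-Kan spectral sequence is functorial in the tower and depends only on the homotopy classes of the maps involved, so this homomorphism induces an action of $GT_p$ on each abelian group $E^r_{-s,t}(T'(\ZZ_p))$. Transporting along the isomorphism of the preceding lemma then produces the action on $E^r_{-s,t}(T(\ZZ_p))$.

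Since the heavy lifting already took place in Theorem \ref{thm:GtonLpE}, I expect no serious obstacle here; the only point that deserves care is the passage from an action by \emph{homotopy} automorphisms of the tower to a strict group action on the pages of the spectral sequence. This is handled by observing that each $E^r$-page is a homotopy invariant: homotopic self-maps of the tower induce the same map on every page, and composition is respected, so the assignment ``$\gamma\mapsto(\text{induced map on }E^r)$'' is a genuine group homomorphism rather than merely a homotopy-coherent one.
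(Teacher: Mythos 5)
Your proposal is correct and follows exactly the route the paper intends: the paper's own proof is the single sentence ``this is a straightforward consequence of Theorem~\ref{thm:GtonLpE}'', and your argument is precisely the unpacking of that consequence (postcomposition action on $\map^h(E_1,L_pE_d)_{\leq k}$ using equivariance of the basepoint map $E_1\to L_pE_d$, propagation through $\hofiber$ and $\Omega^2$, then functoriality of the spectral sequence in homotopy classes of tower maps). Your closing remark about converting a homotopy action into a strict action on each $E^r$-page is the right point to flag and is handled correctly.
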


\begin{proof}
By Theorem \ref{thm:GtonLpE} we deduce that $GT_p$ acts on the tower $T^\prime(\ZZ_p)$ fixing basepoints (which are determined by a fixed choice of inclusion $E_1 \to E_d$ and the canonical map $E_d \to L_p E_d$) and so, by Lemma \ref{lem: equiv towers}, it acts on the tower $T(\ZZ_p)$ fixing basepoints.
\end{proof}

\begin{rem}
In \cite[Appendix D]{KTchord}, Kassel and Turaev construct a Galois action on the Vassiliev tower over $\QQ_p$. As we show later, the Vassiliev tower over $\QQ_p$ is isomorphic to the Goodwillie-Weiss tower $\pi_0 T(\QQ_p)$ (defined in the next section). But we do not know whether the action from Corollary \ref{cor:action on tower} agrees with the one of Kassel and Turaev.

It is also not absurd to wonder whether there is a relation between the tower $T'(\ZZ_p)$ and the \emph{profinite knots} of Furusho \cite{Furusho}, who also come with an action of the Grothendieck-Teichm\"uller group. 
\end{rem}

\begin{rem}
There is a closely related tower
\[
\dots \to \overline{T}_k \to \dots \to \overline{T}_2
\]
approximating the space
\[
\overline{\emb}_c(\RR^1, \RR^n) = \hofiber ({\emb}_c(\RR^1, \RR^n) \to \Omega \textup{InjLin}(\RR^1, \RR^d) )
\]
and whose $k$-term is
\[
\overline{T}_k =  \hofiber(T_k \to \Omega \textup{InjLin}(\RR^1, \RR^d)) \simeq \Omega^2 \map^h(E_1, E_d)_{\leq k} \; .
\]
This tower is identified (Sinha \cite{sinhaoperads}) with the tower associated to a pointed cosimplicial space $K^\bullet$
\[
\dots \to \holim_{\Delta_{\leq k}} K^\bullet \to \holim_{\Delta_{\leq k-1}} K^\bullet \to \dots \to \holim_{\Delta_{\leq 0}} K^\bullet = K^0
\]
The first page of the spectral sequence associated to this tower $\overline{T}$ agrees with that of the tower $T$, except on the column $s = 2$. All the discussion above applies to the tower $\overline{T}$ without any additional difficulty. In section \ref{sec:homology} we study the Bousfield-Kan homology spectral sequence of this cosimplicial space.
\end{rem}

\section{Main theorem}
In this section, we prove the following theorem from which Theorem B of the introduction is an easy consequence.

\begin{thm}\label{thm:htpyBKGW}
Let $p$ be a prime number. The differential
\[
d^r_{-s,t} : E^{r}_{-s,t}(T(\ZZ_p)) \to E^{r}_{-s-r, t+r-1}(T(\ZZ_p))
\]
vanishes if $r-1$ is not a multiple of $(p-1)(d-2)$ and if
\[
t < 2p - 2 + (s-1)(d-2) \; .
\]
\end{thm}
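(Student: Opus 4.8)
**The plan is to use the $GT_p$-action from Corollary \ref{cor:action on tower} together with the weight computation from Proposition \ref{prop: weight} and the vanishing criterion of Proposition \ref{prop: no map}.** The strategy is as follows: the spectral sequence $E^r_{-s,t}(T(\ZZ_p))$ carries a $GT_p$-action, and each differential $d^r$ is $GT_p$-equivariant. If I can show that the source $E^r_{-s,t}$ and target $E^r_{-s-r,t+r-1}$ are cyclotomic of weights $m$ and $m'$ respectively, with $m - m'$ not a multiple of $p-1$, then Proposition \ref{prop: no map} forces the differential to vanish. So the whole proof reduces to a weight bookkeeping argument on the two groups joined by $d^r$.

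**First I would identify the weights on the $E^1$-page.** By Propositions \ref{prop:flo} and \ref{prop : thof}, the group $E^1_{-s,t}(T(\ZZ_p))$ is a subgroup (an intersection of kernels) of $\pi_t E_d(s) \otimes \ZZ_p$. By Proposition \ref{prop: submodule and quotient}, a $GT_p$-submodule of a weight-$n$ cyclotomic module is again weight-$n$ cyclotomic, so it suffices to understand weights on $\pi_t E_d(s) \otimes \ZZ_p$. The homology version is clean: Proposition \ref{prop: weight} says $H_{n(d-1)}(E_d(k),\ZZ_p)$ has weight $n$. So I expect the class in internal degree $t$ sitting in homological degree $n(d-1)$ to have cyclotomic weight $n = t/(d-1)$ (at least on the associated graded, or after transporting from homology to homotopy via a Hurewicz/rational-model comparison). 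Crucially, the weight is a linear function of the topological degree $t$ via $n = t/(d-1)$, \emph{independent} of the arity $s$, because the extra $s$-dependence lives in the purely combinatorial (Poisson-operad) part on which $GT_p$ acts trivially.

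**Next I would compare weights across the differential.** The source sits in degree $t$ with weight $\sim t/(d-1)$ and the target in degree $t+r-1$ with weight $\sim (t+r-1)/(d-1)$, so the weight difference is $m - m' = -(r-1)/(d-1)$. For Proposition \ref{prop: no map} to apply I need $(r-1)/(d-1)$ to not be a multiple of $p-1$, which is exactly the hypothesis that $r-1$ is not a multiple of $(p-1)(d-2)$ --- and here lies the subtlety the inequality $t < 2p-2+(s-1)(d-2)$ is meant to resolve. The point is that the identification of weights with $t/(d-1)$ is only valid in a \textbf{range}: the $GT_p$-action is genuinely cyclotomic only on the lowest-weight part of the homotopy/homology, governed by the bottom cells $E_d(s)$, and the inequality on $t$ (together with $s$, via the intersection-of-kernels description) guarantees that we are in the range where the weight is determined by the degree alone and the higher $p$-torsion corrections (which would mix weights) have not yet appeared.

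**The main obstacle I anticipate is precisely establishing the weight in the correct range on \emph{homotopy} rather than homology.** Proposition \ref{prop: weight} is a homology statement, and the spectral sequence is built from homotopy groups $\pi_t E_d(s) \otimes \ZZ_p$, so I must transfer the weight computation across the (rational/$p$-local) Hurewicz map or, more carefully, control the $GT_p$-representation on $\pi_t$ in terms of $H_*$ within the stated range. The connectivity of $E_d(s)$ and the bound $t < 2p-2+(s-1)(d-2)$ should conspire so that in this range $\pi_t(E_d(s))\otimes\ZZ_p$ is pure of weight $t/(d-1)$ --- the precise origin of the ``$2p-2$'' is the metastable/unstable range where homotopy and homology agree up to weight-preserving isomorphism and the first possible weight-mixing contribution (coming from a $p$-torsion class built from the fundamental class by a $p$-th power or Bockstein-type operation, which shifts weight by a multiple of $p-1$) is excluded. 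Pinning down that this is exactly the threshold, and that below it the action is rigidly cyclotomic of the expected weight, is the technical heart; once that is in hand, the vanishing is a formal application of Proposition \ref{prop: no map}.
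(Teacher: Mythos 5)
Your overall skeleton---$GT_p$-equivariance of the differentials, purity of cyclotomic weights on each page, and then Proposition \ref{prop: no map}---is exactly the strategy of the paper, but your execution contains one outright error and one genuine gap. The error is the degree--weight relation. You import the homological scaling of Proposition \ref{prop: weight} and conclude that a class in degree $t$ has weight $t/(d-1)$. That is the wrong relation for the homotopy groups feeding this spectral sequence. The layers are total homotopy fibers of cubes of configuration spaces, which via the Fadell--Neuwirth fibrations reduce to cubes of wedges of $(d-1)$-spheres; their rational homotopy is generated by iterated Whitehead products of the weight-$1$ fundamental classes, and an $n$-fold Whitehead product lives in degree $n(d-2)+1$, not $n(d-1)$ (each bracket drops a degree). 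The correct statement, which is Theorem \ref{thm:htpy-cycl} of the paper, is that $E^1_{-s,t}(T(\QQ_p))$ vanishes unless $t=n(d-2)+1$ with $n\geq s-1$, in which case the weight is $n=(t-1)/(d-2)$. With your formula the weight difference across $d^r$ would be $(r-1)/(d-1)$, and you would be proving vanishing when $r-1$ is not a multiple of $(p-1)(d-1)$; your assertion that this ``is exactly the hypothesis that $r-1$ is not a multiple of $(p-1)(d-2)$'' is simply false, and no inequality on $t$ can repair it---the mismatch signals that the weight bookkeeping is off by the homology-versus-homotopy shift. (The paper does use Proposition \ref{prop: weight}, but only at the bottom cell, where Hurewicz identifies $\pi_{d-1}$ with $H_{d-1}$, in Proposition \ref{prop:GTonwedges}; all higher weights are then forced by compatibility of the space-level action with Whitehead products.)

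The gap is that you explicitly defer, as ``the technical heart,'' the one step the paper actually has to prove, and your guess at its mechanism is misdirected. The inequality $t<2p-2+(s-1)(d-2)$ is not needed to make the weight degree-determined---rationally that holds in all degrees by Theorem \ref{thm:htpy-cycl}. Its actual role is torsion control: by naturality in the Hilton--Milnor theorem (Theorem \ref{thm:Hilton Milnor}), the looped total fiber splits as a weak product of $\Omega S^{|w|(d-2)+1}$ over words $w$ containing each of the $s-1$ letters at least once, so every sphere occurring has dimension at least $(s-1)(d-2)+1$; by Serre's theorem $\pi_*S^{\ell}$ has no $p$-torsion for $*\leq \ell+2p-4$, hence $E^1_{-s,t}(T(\ZZ_p))$ is $p$-torsion free for $t\leq (s-1)(d-2)+2p-3$ and therefore injects into its rationalization $E^1_{-s,t}(T(\QQ_p))$. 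Purity of weight then descends to the $\ZZ_p$-level groups, and persists to all pages by Proposition \ref{prop: submodule and quotient}, which is what Proposition \ref{prop: no map} needs. Without this chain (Hilton--Milnor splitting, Serre's torsion-free range, injection into the rationalization), your proposal contains no argument that the $\ZZ_p$-modules $E^r_{-s,t}(T(\ZZ_p))$ carry a single well-defined cyclotomic weight, so the final appeal to Proposition \ref{prop: no map} does not get off the ground.
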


We begin by recalling the theorem of Hilton-Milnor in a form that we will need. Given a finite set $R$, we denote by $W(R)$ a set of Lie words in the finite set $R$ that forms a basis for the free Lie algebra on $R$. For $w\in W(R)$ we write $|w|$ for its length.

\begin{thm}[Hilton-Milnor]\label{thm:Hilton Milnor}
There is a weak equivalence
\[
{\prod_{w\in W(R)}}^{\!\!\!\!\!\!\!\prime} \Omega S^{|w|(d-2) + 1}\xrightarrow{\simeq} \Omega\left( \bigvee_RS^{d-1} \right) 
\]
where the symbol $\prod'$ stands for the weak product (union of finite products).
\end{thm}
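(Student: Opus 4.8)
The plan is to exhibit the stated map explicitly through iterated Whitehead products and then to prove it is an equivalence by an integral homology computation governed by the Poincaré--Birkhoff--Witt theorem. Throughout, write $X=\bigvee_R S^{d-1}=\Sigma\big(\bigvee_R S^{d-2}\big)$ and set $V=\tilde H_{d-2}\big(\bigvee_R S^{d-2};\ZZ\big)\cong\ZZ^R$, a free $\ZZ$-module placed in degree $d-2$.

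\emph{Construction of the map.} For each $r\in R$ the inclusion $S^{d-1}\hookrightarrow X$ of the $r$-th summand is a class in $\pi_{d-1}(X)$. For a Lie word $w\in W(R)$ of length $\ell=|w|$, the associated iterated Whitehead product of these inclusions is a map $S^{\ell(d-2)+1}\to X$; the dimension is correct because the Whitehead product sends $\pi_a\times\pi_b$ into $\pi_{a+b-1}$, and an easy induction on $\ell$ (combining $(\ell-1)(d-2)+1$ with $d-1$) yields $\ell(d-2)+1$. Its adjoint is an iterated Samelson product $S^{\ell(d-2)}\to\Omega X$. Since $\Omega X$ is a grouplike topological monoid and $\Omega S^{\ell(d-2)+1}=\Omega\Sigma S^{\ell(d-2)}$ is the free such monoid on $S^{\ell(d-2)}$ (the James construction), this adjoint extends uniquely to an $H$-map $\Omega S^{\ell(d-2)+1}\to\Omega X$. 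Multiplying these $H$-maps with the loop product of $\Omega X$ over each finite subfamily of $W(R)$ and passing to the union defines the asserted map
\[
\Phi:\ \prod'_{w\in W(R)}\,\Omega S^{|w|(d-2)+1}\ \lra\ \Omega X .
\]

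\emph{Homology.} By Bott--Samelson, $H_*(\Omega X;\ZZ)\cong T(V)$, the tensor algebra on $V$; this is the universal enveloping algebra $U(L(V))$ of the free Lie ring $L(V)$ on $V$, and by the integral Poincaré--Birkhoff--Witt theorem it is free as a $\ZZ$-module on the ordered monomials in the Hall basis $W(R)$. On the other side, James' computation gives $H_*(\Omega S^{|w|(d-2)+1};\ZZ)\cong\ZZ[x_w]$ with $|x_w|=|w|(d-2)$, and since these are free $\ZZ$-modules the Künneth theorem carries no Tor terms, so $H_*$ of the source is the free $\ZZ$-module on the ordered monomials in the generators $x_w$. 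Because the Samelson product realizes the commutator in the Pontryagin ring, the multiplicative map $\Phi_*$ carries each $x_w$ to the Lie element $w\in L(V)\subset T(V)$; hence $\Phi_*$ sends the Künneth basis bijectively onto the PBW basis and is an isomorphism.

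\emph{Conclusion.} The map $\Phi$ is a map of connected $H$-spaces (hence of simple spaces) inducing an isomorphism on integral homology, so it is a weak equivalence by the Whitehead theorem for simple spaces; the hypothesis $d\geq 3$ is used only to ensure that $X$ is simply connected and that the James adjunction applies. The main obstacle is the homology step: one must verify carefully that $\Phi_*$ is precisely the PBW comparison map --- in particular that iterated Samelson products correspond to iterated Lie brackets, and that the Koszul signs arising when $d$ is odd (so the generators sit in odd degree) are compatible on both sides --- together with checking that passage from finite subproducts to the weak product is harmless, which holds because on homology everything is the filtered union of the finite subproducts.
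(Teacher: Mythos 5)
The paper offers no proof of this statement to compare against: it is recalled as a classical theorem of Hilton and Milnor and used as a black box. Judged on its own terms, your proposal is a faithful outline of the classical argument going back to Hilton: build the map from iterated Whitehead/Samelson products, compute both sides via James/Bott--Samelson and K\"unneth, match bases in the tensor algebra, and conclude with the Whitehead theorem for simple spaces. The construction of $\Phi$, the identification of $\Phi_*$ with ``multiply the images of the K\"unneth factors'' (which only needs each factor map to be an $H$-map, not $\Phi$ itself), and the final step are all sound. Two minor simplifications: on each factor it is cleaner to take the loop map $\Omega[w]\colon\Omega S^{|w|(d-2)+1}\to\Omega X$ of the Whitehead product directly, rather than invoke a unique $H$-extension from the James construction (which is only a weak equivalence onto $\Omega\Sigma Y$, not an identification); and the passage to the weak product needs strict units, e.g.\ Moore loops, so that the finite subproducts are compatible on the nose.

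The one step you cite that does not exist in the form you invoke it is ``the integral Poincar\'e--Birkhoff--Witt theorem'' when $d$ is odd. In that case $V$ sits in odd degree, and over $\ZZ$ one cannot appeal to a free graded Lie ring with PBW: for odd $x$ one has $[x,x]=2x^2$ in $T(V)$, the sub-Lie-ring generated by $V$ under graded brackets is not free on Lie words (it contains $2x^2$ but not $x^2$), and the monomial basis predicted by graded PBW fails integrally (this is a genuine failure at the prime $2$, related to divided squares). What your argument actually needs is the precise statement that ordered monomials in the basic \emph{graded} commutators $c_w$, with all powers allowed, form a $\ZZ$-basis of $T(V)$; this is true, but it is proved by the leading-term/unitriangularity argument underlying the Hall basis theorem (which is insensitive to the extra signs), not by citing PBW. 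So the repair is to replace the citation by that argument, not merely to ``check that the Koszul signs are compatible'' as you put it. If you prefer to avoid graded commutator calculus altogether, two standard sign-free routes exist: Milnor's proof via the free simplicial group functor, where the ungraded Hall basis theorem for free groups does the work, or the induction on wedge summands using the looped Ganea splitting $\Omega(A\vee B)\simeq\Omega A\times\Omega B\times\Omega\Sigma(\Omega A\wedge\Omega B)$ together with the James splitting of $\Sigma\Omega\Sigma Y$. Finally, a point in favor of your explicit construction: the paper later uses the \emph{naturality} of this equivalence with respect to maps collapsing wedge summands (in the proof of Theorem \ref{thm:htpyBKGW}), and the Whitehead-product description makes that naturality transparent.
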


\medskip

Let  $T(\QQ_p)$ be the tower analogous to $T(\ZZ_p)$ obtained by replacing all instances of $L_p Z$, $Z$ a space, by their rationalizations $(L_p Z)_\QQ$. As in the previous section, we have a $GT_p$-action on $T(\QQ_p)$ and so a $GT_p$-action on the associated spectral sequence $E(T(\QQ_p))$ (c.f. Corollary \ref{cor:action on tower}). Note also that by exactness of the functor $-\otimes_{\ZZ_p}\QQ_p$, we have the following isomorphisms for any $r$, $s$ and $t$
\[E^r_{-s,t}(T(\QQ_p))\cong E^r_{-s,t}(T(\ZZ_p))\otimes\QQ\cong E^r_{-s,t}(T)\otimes\QQ_p\]
the first of which is $GT_p$-equivariant.

\begin{thm}\label{thm:htpy-cycl}
Let $d \geq 3$. The $\QQ_p$-vector space 
\[
E^{1}_{-s,t}(T(\QQ_p))
\]
is zero unless $t = n(d-2) + 1$ for some $n  \geq s-1$. If $t = n(d-2) + 1$, the $GT_p$-action is cyclotomic of weight $n$.
\end{thm}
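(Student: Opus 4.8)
The plan is to combine the structural identification of $E^1$-page of the previous section with the Hilton--Milnor theorem to read off both the vanishing line and the cyclotomic weight from the operadic homotopy. First I would use Propositions \ref{prop:flo} and \ref{prop : thof} to identify $E^1_{-s,t}(T(\QQ_p))$, for $s > 2$, as the rationalization of the intersection of kernels of the degeneracy maps $s^i$ acting on $\pi_t E_d(s)\otimes\QQ_p$. The key point is that, rationally, the homotopy of the spaces $E_d(k)$ is accessible: for $d\geq 3$ the little disks space $E_d(k)$ is a finite-type nilpotent space, and after rationalization its homotopy Lie algebra (equivalently, via Hilton--Milnor, the homotopy of the relevant loop spaces) is controlled by the cohomology, which is the $d$-Poisson operad appearing in Proposition \ref{prop: weight}. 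The grading comes out cleanly: the Lie bracket has degree $d-1$, so that a Lie word $w$ of length $|w|$ contributes in homotopical degree $|w|(d-1)$, and after the shift coming from the loops in Hilton--Milnor (Theorem \ref{thm:Hilton Milnor}) the homotopy groups are concentrated in degrees of the form $t = n(d-2)+1$. This explains the vanishing statement: $E^1_{-s,t}(T(\QQ_p))$ is zero unless $t$ has this form, and the constraint $n\geq s-1$ reflects that a nonzero element of the total homotopy fiber of the $s$-cube requires a Lie word genuinely involving all $s$ inputs, hence of length at least $s-1$ in the reduced picture.

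The second and more delicate half of the statement is the determination of the cyclotomic weight. Here I would argue as follows. By Corollary \ref{cor:action on tower} the spectral sequence $E(T(\QQ_p))$ carries a $GT_p$-action, and by Theorem \ref{thm:GtonLpE} together with Proposition \ref{prop: weight}, the $GT_p$-action on $H_{n(d-1)}(E_d(k),\ZZ_p)$ is cyclotomic of weight $n$. The task is to transport this weight information from homology to the homotopy groups that constitute the $E^1$-page, and to verify that the bookkeeping of degrees matches: an element living in homotopical degree $t = n(d-2)+1$ should acquire weight exactly $n$. The natural route is to use the Hurewicz-type/rational-homotopy dictionary: rationally, a homotopy class of the loop space $\Omega(\bigvee_R S^{d-1})$ indexed by a Lie word $w$ corresponds, under the Milnor--Moore/Hilton--Milnor identification, to an iterated bracket of the $H_{d-1}$-generators, and an iterated bracket of $n$ generators (a word $w$ with $|w| = n$) lives in $H_{n(d-1)}$ and therefore has weight $n$ by multiplicativity of the cyclotomic character on tensor/bracket powers. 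Thus the generator in homotopical degree $t=n(d-2)+1$, corresponding to a word of length $n$, inherits weight $n$, as the $GT_p$-action is compatible with the operadic and loop-space structures that define the Hilton--Milnor splitting.

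More precisely, I would make the weight computation rigorous by tracking a single bracket/product generator through the equivalences. The $GT_p$-equivariance of the map $E_1\to L_pE_d$ (Theorem \ref{thm:GtonLpE}) guarantees that the base point and the section maps $s^i$ used in Proposition \ref{prop : thof} are equivariant, so that the intersection of kernels is a $GT_p$-submodule of $\pi_t E_d(s)\otimes\QQ_p$. Because the relevant homotopy group sits, via the rational Hurewicz isomorphism on the simply connected loop spaces, isomorphically onto a homology group concentrated in a single homological degree $n(d-1)$, Proposition \ref{prop: submodule and quotient} lets me conclude that the $GT_p$-action on this submodule is cyclotomic of the same weight $n$ as on the ambient homology. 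The multiplicativity of the weight under the Lie bracket — weight $a$ bracket weight $b$ gives weight $a+b$ — is exactly what forces a word of length $n$ to sit in weight $n$; this is the content of Proposition \ref{prop: weight} read off on generators.

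\textbf{Main obstacle.} I expect the principal difficulty to be the precise matching of the two gradings — the homological weight grading coming from the number of bracket factors ($n$ factors $\leadsto$ homological degree $n(d-1)$ $\leadsto$ weight $n$) and the homotopical degree grading coming from Hilton--Milnor ($t = n(d-2)+1$) — and in verifying that the rational-homotopy identification of $\pi_t$ with a single-degree homology group is genuinely $GT_p$-equivariant, rather than merely an abstract isomorphism. In particular, one must be careful that the loop-space desuspension implicit in Hilton--Milnor does not shift the weight: the claim is that looping is weight-preserving, which is believable since the $GT_p$-action is defined at the level of the operad (hence compatible with all the structure maps producing the Hilton--Milnor equivalence), but it requires that every equivalence used — Propositions \ref{prop:flo}, \ref{prop : thof}, and Theorem \ref{thm:Hilton Milnor} — be upgraded to a $GT_p$-equivariant statement, at least on the relevant associated graded pieces.
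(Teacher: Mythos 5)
Your first half (the vanishing and the concentration in degrees $t=n(d-2)+1$ with $n\geq s-1$) follows essentially the paper's route: identify the $E^1$-page via Propositions \ref{prop:flo} and \ref{prop : thof}, split off the Fadell--Neuwirth fibers, and invoke Hilton--Milnor. The intermediate degree bookkeeping you state is garbled (a Lie word of length $|w|$ does \emph{not} contribute in homotopical degree $|w|(d-1)$; Theorem \ref{thm:Hilton Milnor} puts it directly in degree $|w|(d-2)+1$, with no further ``shift'' to account for), but the conclusion and the mechanism are the right ones.

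The weight determination, however, has a genuine gap. You claim the relevant homotopy group ``sits, via the rational Hurewicz isomorphism on the simply connected loop spaces, isomorphically onto a homology group concentrated in a single homological degree $n(d-1)$,'' and then apply Proposition \ref{prop: weight}. This cannot work as stated: Hurewicz and Milnor--Moore preserve degree, so $\pi_{n(d-2)+1}$ of a wedge of $(d-1)$-spheres (equivalently $\pi_{n(d-2)}$ of its loop space) maps to homology in degree $n(d-2)+1$ (which vanishes on the wedge for $n\geq 2$), respectively $n(d-2)$ (Pontryagin homology of the loop space) --- never to degree $n(d-1)$. Proposition \ref{prop: weight} concerns $H_{n(d-1)}(E_d(k),\ZZ_p)$, the homology of the configuration spaces themselves, and there is no degree-preserving equivariant map carrying the homotopy groups you care about into those groups. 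Concretely, what is missing is the base case: that $\gamma\in GT_p$ acts on $\pi_{d-1}L_p\bigl(\vee_k S^{d-1}\bigr)\cong \ZZ_p^{k}$ as scalar multiplication by $\chi(\gamma)$. A priori the space-level action on the Fadell--Neuwirth fiber is only some automorphism of $\ZZ_p^k$, possibly mixing the wedge summands; ruling this out is exactly the content of the paper's Proposition \ref{prop:GTonwedges}, proved by exhibiting the collapse maps $p_j:\vee_k S^{d-1}\to S^{d-1}$ as $GT_p$-equivariant (they arise from forgetting-points maps, i.e.\ operadic composition with nullary operations) and reading off that the action matrix is diagonal. Once that base case is in place, your multiplicativity argument (space-level action commutes with Whitehead products, so a length-$n$ bracket has weight $n$) is correct and coincides with the paper's. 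A repair staying close to your outline: the fiber inclusion $\vee_k S^{d-1}\hookrightarrow \emb(\uli{k+1},\RR^d)$ is equivariant and injective on $H_{d-1}$, so Hurewicz \emph{at the bottom degree $d-1$} (not $n(d-1)$) together with Propositions \ref{prop: weight} and \ref{prop: submodule and quotient} yields the weight-one base case; but this step must be made explicitly, and at the correct degree.
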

\begin{proof}
We assume $s > 2$, otherwise $E^1_{-s,t}(T(\QQ_p))$ is zero. Then
\[
E^{1}_{-s,t}(T(\QQ_p)) = \pi_{t} \textup{thofiber} \left (S \subset \uli{s} \mapsto L_p \emb(S, \RR^d) \right ) \otimes \QQ \; .
\]
Regard the $s$-cube $S \mapsto L_p \emb(S, \RR^d)$ as an $(s-1)$-cube of maps
\[
( R \subset \uli{s-1}) \mapsto \chi_R 
\]
where $\chi_R$ is the map $L_p \emb(R \cup \{s\}, \RR^d) \to L_p \emb(R, \RR^d)$ forgetting the point labelled \emph{s}. Then the total homotopy fiber of the original cube is identified with the total homotopy fiber of the $(s-1)$-cube $R \mapsto \hofiber \chi_R$. This $(s-1)$-cube is identified with a cube
\[
R \mapsto \phi_R:=L_p( \vee_{R} S^{d-1} )\;.
\]
where the maps collapse wedge summands. This follows from the Fadell-Neuwirth fiber sequence 
\[
\vee_{R} S^{d-1} \to \emb(R \cup \{s\}, \RR^d) \to \emb(R, \RR^d) \; 
\]
and the fact that $L_p$ commutes with taking homotopy pullbacks of simply connected spaces.
By Proposition \ref{prop : thof}, the canonical projection
\begin{equation}\label{eq:projmap}
\textup{thofiber}(\phi) \to L_p \left ( \vee_{s-1} S^{d-1} \right )
\end{equation}
is injective on homotopy groups. Now, the $GT_p$ action on $E^{1}_{-s,t}(\QQ_p)$ comes from a (basepoint preserving) action on the cube $S \mapsto L_p \emb(S, \RR^d)$, or equivalently, from an action on the cube $\phi$. As such, the projection map (\ref{eq:projmap}) is $GT_p$-equivariant. This means that to understand the action on $E^{1}_{-s,t}(\QQ_p)$ it is enough to understand the action on 
\[
\pi_{t} L_p \left ( \vee_{s-1} S^{d-1} \right ) \otimes \QQ \cong \pi_{t} \left ( \vee_{s-1} S^{d-1} \right ) \otimes \QQ_p \; .
\]

Using Hilton-Milnor theorem (Theorem \ref{thm:Hilton Milnor}) or, alternatively, Milnor-Moore, the rational homotopy groups of a wedge of $(d-1)$-dim spheres are concentrated in degrees $n(d-2)+1$, for $n \geq 1$. The integer $n$ records the \emph{size} of the word in the Hilton-Milnor decomposition. In the total homotopy fiber of the $(s-1)$-cube $R \mapsto \vee_R S^{d-1}$, $n$ must be at least $s-1$ since the generator $\iota \in \pi_{d-1}(S^{d-1})$ of each wedge summand must occur at least once.
 This establishes the first part of the theorem. The second part is a consequence of the fact that the $GT_p$-action on $\pi_{n(d-2)+1}(\vee S^{d-1}) \otimes \QQ_p$ is cyclotomic of weight $n$. This is the content of Proposition \ref{prop:GTonwedges} below when $n = 1$. For higher $n$'s, these homotopy groups are generated by Whitehead products of elements in $\pi_{d-1}$, and since the $GT_p$ action exists at the level of spaces it must be compatible with Whitehead products.
\end{proof}

\begin{prop}\label{prop:GTonwedges}
The $GT_p$ action on
\[
\pi_{d-1} L_p(\vee_k S^{d-1}) \cong \oplus_{k} \ZZ_p
\] 
is cyclotomic of weight $1$.
\end{prop}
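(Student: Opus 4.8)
The plan is to identify the $GT_p$-action on $\pi_{d-1} L_p(\vee_k S^{d-1})$ with the action already understood on the homology of the little disks operad, where Theorem \ref{thm:GtonLpE} and Proposition \ref{prop: weight} pin down the weight. The key observation is that the wedge $\vee_k S^{d-1}$ is precisely the fiber of the Fadell-Neuwirth fibration $\emb(\uli{k} \cup \{k+1\}, \RR^d) \to \emb(\uli{k}, \RR^d)$, and that each $S^{d-1}$ summand is the configuration space of two points, i.e. (homotopy equivalent to) $E_d(2)$. Concretely, I would first recall that $\pi_{d-1}(S^{d-1}) \cong \ZZ$ is generated by the fundamental class $\iota$, so that under the Hurewicz isomorphism $\pi_{d-1} L_p(\vee_k S^{d-1}) \otimes \ZZ_p \cong H_{d-1}(L_p(\vee_k S^{d-1}), \ZZ_p) \cong \oplus_k \ZZ_p$, and since $d - 1 \geq 2$ these spaces are simply connected so the completion behaves as expected.

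The main step is then to reduce to the arity-two computation. Each of the $k$ wedge summands corresponds to a choice of one of the first $k$ points to collide with the newly added point labelled $k+1$; the summand is the space swept out by this collision, which is (up to the usual equivalence $E_d(2) \simeq S^{d-1}$) a copy of the two-point configuration space. Under this identification the generator of the corresponding $\ZZ_p$ summand in $\pi_{d-1}$ is carried, via Hurewicz, to a generator of $H_{d-1}(L_pE_d(2), \ZZ_p)$. By Theorem \ref{thm:GtonLpE}, the $GT_p$-action on $H_{d-1}(L_pE_d(2), \ZZ_p)$ is cyclotomic of weight $1$. Since the $GT_p$-action originates at the level of the operad $L_pE_d$ and is therefore compatible with the operadic structure maps defining the Fadell-Neuwirth fibration, it respects this summand decomposition, and so acts with weight $1$ on each $\ZZ_p$ factor. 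Hence the action on the whole of $\oplus_k \ZZ_p$ is cyclotomic of weight $1$.

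The hard part will be making precise the claim that the $GT_p$-action on $\pi_{d-1} L_p(\vee_k S^{d-1})$ is genuinely induced from the operadic action on $L_pE_d$, rather than being an independent action that merely happens to appear. This requires tracing through how the action of Corollary \ref{cor:action on tower} on the Goodwillie-Weiss tower restricts to the layers: the cube $S \mapsto L_p\emb(S, \RR^d)$ is built from the spaces $L_pE_d(S)$ with their operadic composition maps (collapsing wedge summands corresponds to operadic composition with a nullary operation), so the $GT_p$-equivariance of these structure maps is exactly what guarantees that the decomposition of the fiber into wedge summands is $GT_p$-stable and that each summand inherits the weight-$1$ action. Once this compatibility is established, the identification of each summand with $H_{d-1}(L_pE_d(2), \ZZ_p)$ and the appeal to Theorem \ref{thm:GtonLpE} finish the proof; the remaining verifications (simple connectivity, the Hurewicz isomorphism, exactness of $p$-completion on finitely generated groups) are routine given the earlier propositions.
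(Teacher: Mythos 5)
Your proposal is correct and follows essentially the same route as the paper: reduce to the arity-two case, whose weight-$1$ action is supplied by Theorem \ref{thm:GtonLpE}, using that the collapse maps $p_j : \vee_k S^{d-1} \to S^{d-1}$ are $GT_p$-equivariant because they arise from operadic composition with nullary operations (i.e.\ forgetting points in configuration spaces). The detail you flag as ``the hard part'' is settled in the paper by the short computation $p_j \circ \alpha \circ \textup{incl}_i = \alpha \circ p_j \circ \textup{incl}_i$, which equals $\alpha$ for $i=j$ and is trivial otherwise, so equivariance of the projections alone (without needing the wedge summands to be preserved at the space level) forces the action on $\pi_{d-1}$ to be diagonal of weight $1$.
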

\begin{proof}
For the duration of this proof, we will drop $L_p$ from the notation and so, for a space $X$, we keep $X$ as notation for $L_p X$.

By Proposition \ref{prop: weight}, the statement holds for $k = 1$. For $k > 1$, set $\ell = k+1$ and consider the commutative square
\begin{equation*}
	\begin{tikzpicture}[descr/.style={fill=white}, baseline=(current bounding box.base)]
	\matrix(m)[matrix of math nodes, row sep=2.5em, column sep=5em,
	text height=1.5ex, text depth=0.5ex]
	{
	\emb(\uli{\ell}, \RR^d) &  \emb(\uli{k}, \RR^d) 	\\
	\emb(\uli{2}, \RR^d) & \emb(\uli{1}, \RR^d) \\
	};
	\path[->,font=\scriptsize]
		(m-1-1) edge node [auto] {\mbox{forget $k+1$}} (m-1-2)
		(m-1-1) edge node [auto] {} (m-2-1)
		(m-2-1) edge node [auto] {\mbox{forget $2$}} (m-2-2)
		(m-1-2) edge node [auto] {} (m-2-2);
	\end{tikzpicture}
\end{equation*}
where the left map is induced by the inclusion $f : \uli 2 \to \uli \ell$ given by $f(1) = j$, $f(2) = k+1$; and the right map is induced by the map $\uli 1 \to \uli k$ selecting $j$. The induced map on horizontal homotopy fibers has the form
\[
p_j : \vee_k S^{d-1} \to S^{d-1}
\]
and it corresponds to collapsing all summands except the $j$-th one to the basepoint. All the (based) maps in the square are $GT_p$-equivariant since they correspond to composition maps in the operad $E_d$ involving nullary operations, and so $p_j$ is a $GT_p$-equivariant map. 

Let $\alpha$ be an element of $GT_p$. To describe the action of $\alpha$ on the wedge $\vee_k S^{d-1}$ we must describe, for each pair $1 \leq i, j \leq k$, the composition
\[
S^{d-1} \xrightarrow{\textup{incl}_i} \vee_k S^{d-1} \xrightarrow{\alpha} \vee_k S^{d-1} \xrightarrow{p_j} S^{d-1}
\] 
where $\textup{incl}_i$ means the inclusion of the $i$-th summand. Since $p_j   \circ  \alpha = \alpha \circ p_j$, this composite equals $\alpha  \circ  p_j  \circ  \textup{incl}_i$, which is $\alpha$ if $i = j$ and is trivial otherwise.
\end{proof}

\begin{proof}[Proof of Theorem \ref{thm:htpyBKGW}]
Let $W$ denote the total homotopy fiber of the $(s-1)$-cube $R \mapsto \vee_R S^{d-1}$, for $s > 2$. We look at the $p$-torsion in the homotopy groups of $W$. By the naturality in the Hilton-Milnor theorem (Theorem \ref{thm:Hilton Milnor}), it follows that
\[
\Omega W \simeq {\prod_{w}}^{\prime} \Omega S^{|w|(d-2)+1}
\]
where $w$ runs over the words in the letters $1, \dots, s-1$ containing every letter \emph{at least once}. As such, the smallest word in the product has length $s-1$, and the sphere of smallest dimension is $S^{(s-1)(d-2)+1}$. 

By a famous result of Serre, the homotopy groups $\pi_*S^{\ell}$ are $p$-torsion free for $* \leq \ell+2p-4$ for $\ell \geq 3$.
Therefore, the homotopy groups $\pi_* W$ are $p$-torsion free whenever $*  \leq N$ where
\[
N = (s-1)(d-2)+2p-3 \; ,
\]
(remember $d \geq 3$ and $s \geq 3$ so that $(s-1)(d-2)+1 \geq 3$). And so, in the range $t \leq N$, we have an inclusion
\[
E^1_{-s,t}(T(\ZZ_p)) \hookrightarrow E^1_{-s,t}(T(\ZZ_p)) \otimes \QQ\cong E^1_{-s,t}(T(\QQ_p)) \; .
\]
Therefore, by Theorem \ref{thm:htpy-cycl}, in the range $t \leq N$, the group $E^1_{-s,t}(T(\ZZ_p))$ is zero unless $t = n(d-2) +1$, in which case the $GT_p$ action is cyclotomic of weight $n$. The same is true for the successive pages $E^r_{-s,t}$ by Proposition \ref{prop: submodule and quotient} and Remark \ref{rem : Zp linear}. Proposition \ref{prop: no map} completes the proof.
\end{proof}

We can now prove Theorem B of the introduction.

\begin{proof}[Proof of Theorem B]
Recall that $\ZZ_{(p)}$ denotes the ring of $p$-local integers. Proposition \ref{prop: complete E} gives an isomorphism of spectral sequences
\[E^*_{*,*}(T(\ZZ_p))\cong E^*_{*,*}(T)\otimes_{\ZZ}\ZZ_{p}\cong (E^*_{*,*}(T)\otimes_{\ZZ}\ZZ_{(p)})\otimes_{\ZZ_{(p)}}\ZZ_p\]
Then the result follows from the fact that a map $f$ of $\ZZ_{(p)}$-modules vanishes if and only if $f\otimes_{\ZZ_{(p)}}\ZZ_p$ vanishes.
\end{proof}

\section{Some consequences}

\begin{cor}\label{cor: collapse over Q}
The spectral sequence $E_{-s,t}^*(T)\otimes_{\ZZ}\QQ$ collapses at the $E^2$-page.
\end{cor}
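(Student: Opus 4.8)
The plan is to deduce rational collapse from the differential-vanishing result established in Theorem~\ref{thm:htpyBKGW}, transported to the rational context via the identifications of spectral sequences already set up. The key observation is that Corollary~\ref{cor: collapse over Q} is a statement about $E^*(T)\otimes\QQ$, and we have the chain of isomorphisms
\[
E^r_{-s,t}(T)\otimes_{\ZZ}\QQ \cong E^r_{-s,t}(T(\QQ_p))
\]
for any prime $p$, obtained by combining Proposition~\ref{prop: complete E} with exactness of $-\otimes_{\ZZ_p}\QQ_p$. So it suffices to show that the differentials in $E^*(T(\QQ_p))$ vanish for some (indeed any) fixed prime $p$, from the $E^2$-page onward.

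First I would fix a prime $p$ and examine the weight structure on $E^1(T(\QQ_p))$ provided by Theorem~\ref{thm:htpy-cycl}: the term $E^1_{-s,t}(T(\QQ_p))$ vanishes unless $t = n(d-2)+1$ for some $n\geq s-1$, and when it is nonzero the $GT_p$-action is cyclotomic of weight $n$. By Proposition~\ref{prop: submodule and quotient}, this weight description passes to every later page $E^r_{-s,t}(T(\QQ_p))$, since each is a subquotient of the previous one. Next I would analyze the effect of a differential $d^r$ on weights. The differential $d^r_{-s,t}$ lands in $E^r_{-s-r,t+r-1}(T(\QQ_p))$; the source sits in weight $n$ where $t=n(d-2)+1$, while the target, if nonzero, sits in weight $n'$ where $t+r-1 = n'(d-2)+1$. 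Subtracting, $r-1 = (n'-n)(d-2)$. Since $d^r$ is $GT_p$-equivariant by Corollary~\ref{cor:action on tower}, Proposition~\ref{prop: no map} forces it to vanish unless $n-n'$ is a multiple of $p-1$, i.e.\ unless $r-1$ is a multiple of $(p-1)(d-2)$.

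The crux is then to kill the surviving differentials, namely those with $r-1$ a multiple of $(p-1)(d-2)$, by letting the prime $p$ vary. For a single prime these differentials are not excluded by the weight argument, but rationally we are free to choose $p$ as large as we wish. Given any fixed $r\geq 2$, choosing a prime $p$ with $(p-1)(d-2) > r-1$ guarantees that $r-1$ is \emph{not} a multiple of $(p-1)(d-2)$ (as it is a positive integer strictly smaller than the modulus), so Theorem~\ref{thm:htpyBKGW}, or directly Proposition~\ref{prop: no map} applied to $d^r$ on $E^r(T(\QQ_p))$, shows $d^r=0$ in $E^*(T(\QQ_p))$. Via the isomorphism $E^r(T(\QQ_p))\cong E^r(T)\otimes\QQ$, this means the $r$-th differential of $E^*(T)\otimes\QQ$ vanishes. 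Since $r\geq 2$ was arbitrary and the choice of $p$ depended only on $r$, all differentials from the $E^2$-page onward vanish, and the spectral sequence $E^*_{-s,t}(T)\otimes\QQ$ collapses at $E^2$, as claimed. The only mild subtlety to check is that the weight bookkeeping is unaffected by rationalization and that the equivariant identification of differentials is genuinely natural in $p$; both follow from the constructions already in place.
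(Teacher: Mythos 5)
Your proof is correct, and its engine is the same as the paper's: let the prime $p$ grow so that the cyclotomic-weight obstruction (Proposition \ref{prop: no map}) kills a given differential, then transfer the vanishing back to $E^*(T)\otimes\QQ$. The implementation differs in a way worth noting. The paper's proof is a one-liner quoting Theorem B: for a \emph{fixed} differential $d^r_{-s,t}$ it chooses $p$ so large that both hypotheses of that theorem hold, i.e.\ $r-1$ is not a multiple of $(p-1)(d-2)$ \emph{and} $t$ lies in the Serre range $t<2p-2+(s-1)(d-2)$; thus $p$ depends on $(r,s,t)$. You instead rerun the weight argument inside the rational spectral sequence $E^*(T(\QQ_p))$, where Theorem \ref{thm:htpy-cycl} provides the weight decomposition in \emph{all} bidegrees --- the range restriction in Theorem \ref{thm:htpyBKGW} exists only to control $p$-torsion in homotopy groups of spheres, which is invisible rationally. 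Consequently your choice of $p$ depends only on $r$, and you kill the entire differential $d^r$ at once; this is a mild but genuine streamlining. Two small corrections. First, your parenthetical fallback ``or Theorem \ref{thm:htpyBKGW} directly'' does not quite work with $p$ chosen only in terms of $r$, since that theorem's hypothesis $t<2p-2+(s-1)(d-2)$ would again force $p$ to depend on $t$; your main argument via Proposition \ref{prop: no map} (together with Proposition \ref{prop: submodule and quotient} and the equivariance of differentials from Corollary \ref{cor:action on tower}) is the one that works uniformly in $(s,t)$. Second, the identification you invoke should read $E^r_{-s,t}(T(\QQ_p))\cong E^r_{-s,t}(T)\otimes\QQ_p$ rather than $\otimes\,\QQ$ --- these groups are $\QQ_p$-vector spaces, not $\QQ$-vector spaces --- but your deduction survives intact because $\QQ\to\QQ_p$ is faithfully flat, so a map of $\QQ$-vector spaces vanishes if and only if it vanishes after applying $-\otimes_{\QQ}\QQ_p$; this is exactly the step the paper phrases as ``zero $p$-locally, hence zero after inverting $p$.''
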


\begin{proof}
Let $d^r_{-s,t}$ be a differential in that spectral sequence with $r>2$. Up to choosing $p$ a large enough prime, we may assume that $t<2p-2+(s-1)(d-1)$ and that $r-1$ is not a multiple of $(p-1)(d-1)$. Then this differential is zero in $E^r_{-s,t}(T)\otimes_{\ZZ}\ZZ_{(p)}$ by Theorem B so it is also zero after inverting $p$.
\end{proof}

\begin{rem}
For $d\geq 4$, this result is due to Arone, Lambrechts, Turchin and Voli\'{c} (see \cite{ALTV}). The case $d=3$ does not appear in \emph{loc. cit.} and, to the best of our knowledge, does not appear elsewhere in the literature. The reason seems to be that the relative formality of the map of operads $E_1\to E_3$ was not known when \cite{ALTV} was written. 
\end{rem}

\begin{cor}\label{cor : collapse for T_n} 
Let $p$ be a prime. For $n \leq (p-1)(d-2) + 3$, the spectral sequence associated to the tower $T_{\leq n}(\ZZ_p)$, computing 
\[
\pi_* T_{n} \emb_c(\RR^1, \RR^d) \otimes \ZZ_p \; ,
\]
collapses at the $E^2$-page for $(-s,t)$ satisfying $t< 2p-2+(s-1)(d-2)$.
\end{cor}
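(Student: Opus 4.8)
The plan is to deduce Corollary \ref{cor : collapse for T_n} from Theorem \ref{thm:htpyBKGW}, but to do so I must account for the discrepancy between the two ranges: Theorem \ref{thm:htpyBKGW} gives vanishing of differentials only for $t < 2p-2+(s-1)(d-2)$, whereas the corollary claims collapse in the strictly larger region $t < 2p-2+(s-1)(d-1)$. The key to bridging this gap is the sparseness of the $E^1$-page established in Theorem \ref{thm:htpy-cycl}: rationally, $E^1_{-s,t}(T(\QQ_p))$ is concentrated in degrees $t = n(d-2)+1$ with $n \geq s-1$, and in the relevant range (where the integral and rational pages agree, by Serre's torsion-vanishing argument as in the proof of Theorem \ref{thm:htpyBKGW}) the same sparseness holds for $E^1_{-s,t}(T(\ZZ_p))$.

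First I would observe that the spectral sequence for the truncated tower $T_n(\ZZ_p)$ is the spectral sequence of $T(\ZZ_p)$ restricted to columns $-s$ with $s \leq n$; a differential $d^r_{-s,t}$ in the truncated spectral sequence is therefore identical to the one in the full spectral sequence provided its target column $-(s+r)$ also satisfies $s+r \leq n$. Since we only care about collapse at $E^2$, I consider differentials $d^r$ with $r \geq 2$ and must show they vanish in the stated region. Fix such a differential $d^r_{-s,t}$ with $t < 2p-2+(s-1)(d-1)$. I would split into two cases according to whether $r-1$ is a multiple of $(p-1)(d-2)$.

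The main point is the first case, where $r-1$ is \emph{not} a multiple of $(p-1)(d-2)$. Here I cannot directly invoke Theorem \ref{thm:htpyBKGW} because $t$ may exceed $2p-2+(s-1)(d-2)$. Instead I would argue that one of the source or target of the differential is already zero, using sparseness. By Theorem \ref{thm:htpy-cycl} together with the torsion-freeness range $t \leq (s-1)(d-2)+2p-3$ from the proof of Theorem \ref{thm:htpyBKGW}, any nonzero entry $E^r_{-s,t}(T(\ZZ_p))$ in the relevant range forces $t = n(d-2)+1$ with $n \geq s-1$. The source and target of $d^r$ lie in columns $-s$ and $-(s+r)$, so a nonzero differential requires both $t = n(d-2)+1$ and $t+r-1 = m(d-2)+1$ for integers $n \geq s-1$, $m \geq s+r-1$; subtracting gives $r-1 = (m-n)(d-2)$, which since $m-n \geq r$ forces $r-1 \geq r(d-2) \geq r$ when $d \geq 3$, a contradiction. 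Hence in the torsion-free range at least one endpoint vanishes and $d^r = 0$ automatically, with no need for the $GT_p$-weight argument at all. The constraint $n \leq (p-1)(d-2)+3$ in the hypothesis is exactly what keeps the entire region $t < 2p-2+(s-1)(d-1)$ inside the Serre torsion-free range $t \leq (s-1)(d-2)+2p-3$ for all columns $s \leq n$ that contribute, so this sparseness argument applies uniformly.

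In the remaining case, $r-1$ \emph{is} a multiple of $(p-1)(d-2)$, and I expect this to be the genuine obstacle. Here the sparseness computation above is consistent with a nonzero differential only when $(m-n)(d-2) = r-1$ is itself a multiple of $(p-1)(d-2)$, i.e. $m-n$ is a multiple of $p-1$; the $GT_p$-weights of source and target then differ by a multiple of $p-1$, so Proposition \ref{prop: no map} gives no information. I would handle this by the rational collapse already available: in the torsion-free range the integral page injects into the rational page $E^r_{-s,t}(T(\QQ_p))$, and by Corollary \ref{cor: collapse over Q} all differentials $d^r$ with $r \geq 2$ vanish rationally. Since the differential is a map of finitely generated $\ZZ_{(p)}$-modules (after base change from $\ZZ_p$, using Proposition \ref{prop: complete E}) whose source is torsion-free in this range, vanishing after tensoring with $\QQ$ forces vanishing integrally. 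Combining the two cases, every $d^r$ with $r \geq 2$ vanishes throughout the region $t < 2p-2+(s-1)(d-1)$, which is the asserted collapse; the improvement from $(d-2)$ to $(d-1)$ in the range is precisely the payoff of exploiting sparseness and rational collapse rather than the weight obstruction alone.
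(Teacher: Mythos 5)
Your proposal misidentifies what the hypothesis $n \leq (p-1)(d-2)+3$ is actually for, and both of your cases contain errors. In the paper's argument the hypothesis disposes of exactly the differentials your Case 2 worries about: if $r-1$ is a positive multiple of $(p-1)(d-2)$ then $r \geq R := (p-1)(d-2)+1$, and since the first nonzero column is $s=3$, the target of $d^r$ sits in column $s+r \geq R+3 > n$, which is identically zero in the spectral sequence of the \emph{truncated} tower $T_n$. So Case 2 needs no rational input at all --- and the argument you give for it is logically backwards: a map of $\ZZ_p$-modules that vanishes rationally and has torsion-free \emph{source} need not vanish (consider $\ZZ_p \twoheadrightarrow \ZZ/p$); you would need the \emph{target} to be torsion-free, which is not guaranteed in your range. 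Moreover, Corollary \ref{cor: collapse over Q} concerns the full tower $T$, and near the truncation edge the groups $E^r_{-s,t}(T_n)$ can be strictly larger than $E^r_{-s,t}(T)$ (outgoing differentials to columns beyond $n$ die in the truncated sequence), so the rational vanishing you quote does not transfer automatically; the paper avoids this by running the weight argument of Theorem \ref{thm:htpyBKGW} directly on the truncated tower, which carries the same $GT_p$-action.

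Case 1 contains an arithmetic error that breaks the sparseness argument: from $t = n(d-2)+1$ with $n \geq s-1$ and $t+r-1 = m(d-2)+1$ with $m \geq s+r-1$ you infer $m-n \geq r$, but $n$ is only bounded \emph{below}, so $m-n$ can be as small as $1$: a $d^{d-1}$ differential from $t = n(d-2)+1$ (with $n$ large) to $t' = (n+1)(d-2)+1$ is perfectly consistent with sparseness. These are precisely the differentials killed by the cyclotomic weights (the weights $n$ and $n+1$ differ by $1$, not a multiple of $p-1$), i.e.\ by Theorem \ref{thm:htpyBKGW} via Proposition \ref{prop: no map}; your claim that ``no $GT_p$-weight argument is needed'' is therefore false, and for $d=3$ (where $d-2=1$ and there is no sparseness at all) your Case 1 says nothing whatsoever. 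Your range bookkeeping is also wrong: the region $t < 2p-2+(s-1)(d-1)$ exceeds the Serre torsion-free bound $t \leq (s-1)(d-2)+2p-3$ by $s-1$ in every column $s \geq 3$, independently of $n$, so the hypothesis on $n$ does not place the region inside the torsion-free range as you assert. In fact the $(d-1)$ in the statement is almost certainly a typo for $(d-2)$: the paper's own proof invokes Theorem \ref{thm:htpyBKGW} directly for the region, and the range $i \leq 2p-6+2(d-2)$ in Theorem C is exactly what $t < 2p-2+(s-1)(d-2)$ yields at the first column $s=3$. With the region read correctly, the intended proof is short: for $2 \leq r < R$ the integer $r-1$ lies strictly between $0$ and $(p-1)(d-2)$, hence is not a multiple of it and Theorem \ref{thm:htpyBKGW} applies; for $r \geq R$ the target column exceeds $n$ and the group is zero by truncation.
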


\begin{proof}
Let $T(s) := 2p-2+(s-1)(d-2)$. Let $A$ be the region of the second-quadrant consisting of those $(-s,t)$ such that $t < T(s)$. By Theorem \ref{thm:htpyBKGW}, the first possibly non-trivial differential in the region $A$ is
\[
d^{R} : E^{R}_{-s,t} \to E^{R}_{-s-R,t+R-1} \; .
\]
with $R = (p-1)(d-2)+1$. Since the first non-zero column is $s = 3$, the first possibly non-trivial differentials land in $E^{R}_{-3-R,t+R-1}$. But the groups $E^1_{-s,*}$ are zero whenever $s > n$. So $E^{R}_{-3-R,t+R-1}$ is zero, and so are the target groups of higher differentials from the region $A$. Therefore, the spectral sequence collapses in the region $A$ whenever $n < R + 3$.
\end{proof}

\begin{rem}
Note that the corollary has nothing to offer at the prime $2$.
\end{rem}

\begin{rem}
Theorem \ref{thm:htpyBKGW} has something to say about the homotopy groups of $\pi_*(T_n)$ even outside of the range of the previous corollary. For instance, let us consider $\pi_0(T_n)\otimes\ZZ_p$ in the case of knots in $\RR^3$. This abelian group has filtration whose associated graded is $\oplus_{s\leq n}E^{\infty}_{-s,s}(T_n(\ZZ_p))$. The corollary tells us that we have isomorphisms
\[E^2_{-s,s}\cong E^\infty_{-s,s}\]
in the range $s\leq p+2$. For $p+3 \leq s\leq 2p+1$, the only differential that can hit $E^2_{-s,s}$ is $d^p$, therefore, we have an isomorphism
\[E^\infty_{-s,s}\cong E^{p+1}_{-s,s}\cong E^2_{-s,s}/\mathrm{Im}(d^p)\]
For $2p+2\leq s\leq 3p$, the group $E^{\infty}_{-s,s}$ will be the quotient of $E^2_{-s,s}$ by the image of $d^p$ and $d^{2p-1}$. This pattern continues.
\end{rem}

In the remainder of this section, we prove some results about extensions leading to the proof of Theorem \ref{thmC} below and Corollary C from the introduction. Let us fix a unit $u\in\mathbb{Z}_p$ such that the residue of $u$ modulo $p$ is a generator of $\mathbb{F}_p^\times$. For $M$ a $\ZZ_p$-module and $i$ an integer, we denote by $M(i)$ the $\ZZ_p[t,t^{-1}]$-module $M$ where $t$ acts as multiplication by $u^i$. 

\begin{prop}
Let $M$ be a $\ZZ_p$-module of finite type. Let $i$ and $j$ be two integers such that $(p-1)$ does not divide $i-j$. Then the group $\mathrm{Ext}^1_{\mathbb{Z}_p[t,t^{-1}]}(M(i),N(j))$ is zero.
\end{prop}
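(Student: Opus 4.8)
The plan is to compute $\mathrm{Ext}^1_{\ZZ_p[t,t^{-1}]}(M(i), N(j))$ via a free resolution of $M(i)$ as a $\ZZ_p[t,t^{-1}]$-module, and to show that the relevant cohomology vanishes because the action of $t$ differs by a unit power $u^{i-j}$ that cannot be resolved when $p-1 \nmid i-j$. The key observation, parallel to Proposition \ref{prop: no map}, is that on $\ZZ/p$-coefficients the map "multiplication by $u^{i-j} - 1$" is an isomorphism precisely when $p-1$ does not divide $i-j$, since $u$ reduces to a generator of the cyclic group $\FF_p^\times$ of order $p-1$.

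First I would set $R = \ZZ_p[t,t^{-1}]$ and note that $M(i)$ and $N(j)$ are $R$-modules on which $t$ acts as the units $u^i$ and $u^j$ respectively. Equivalently, $M(i) = M \otimes_{\ZZ_p} \ZZ_p(i)$ where $\ZZ_p(i)$ is the rank-one $R$-module with $t$ acting by $u^i$. I would reduce to understanding $\mathrm{Ext}^1_R(\ZZ_p(i), -)$: since $M$ is of finite type over $\ZZ_p$, pick a finite free resolution (or at least use that $M$ is a finite direct sum of cyclic modules $\ZZ_p$ and $\ZZ/p^k$), and because $\mathrm{Ext}^1_R$ is additive in the first variable, it suffices to treat the case $M = \ZZ_p$ and $M = \ZZ/p^k$ separately. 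In either case, $M(i)$ is presented over $R$ by the two-term complex coming from the element $t - u^i \in R$ (together with a $p^k$ relation in the torsion case), and computing $\mathrm{Ext}^1$ amounts to computing the cokernel of the map $N(j) \to N(j)$ given by the action of $t - u^i$, that is, the map $n \mapsto u^j n - u^i n = (u^j - u^i)n = u^j(1 - u^{i-j})n$.

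The central step is then to show this cokernel vanishes. Since $u^j$ is a unit, the map is multiplication by $u^{i-j} - 1$ on $N$ (up to the unit $u^j$), and I would argue that $u^{i-j}-1$ is a unit in $\ZZ_p$ exactly as in the proof of Proposition \ref{prop: no map}: letting $v$ be the reduction of $u$ modulo $p$, a generator of $\FF_p^\times$, the hypothesis $p-1 \nmid i-j$ gives $v^{i-j} \neq 1$, so $u^{i-j} - 1$ is nonzero mod $p$ and hence a unit in $\ZZ_p$. Multiplication by a unit is an isomorphism on any $\ZZ_p$-module $N$, so the cokernel is zero, giving $\mathrm{Ext}^1_R(M(i), N(j)) = 0$. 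I would also dispatch the torsion summands $\ZZ/p^k$ the same way, since the $p^k$-relation commutes with the now-invertible operator $t - u^i$ and contributes nothing new to $\mathrm{Ext}^1$.

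The main obstacle I anticipate is bookkeeping in the resolution rather than conceptual difficulty: I must set up the free $R$-module presentation of $M(i)$ carefully so that the connecting map in $\mathrm{Ext}^1$ really is the action of $t - u^i$ and not something that mixes the $\ZZ_p$-module structure of $M$ with the $t$-action in an unexpected way. One clean way to avoid this is to exploit that $R = \ZZ_p[t,t^{-1}]$ is a localization of a polynomial ring, so $M(i)$ has projective dimension at most $1$ over $R$ once $M$ has projective dimension at most $1$ over $\ZZ_p$ (which holds since $\ZZ_p$ is a PID and $M$ is finite type); this guarantees $\mathrm{Ext}^{\geq 2}$ issues never arise and localizes all the work onto the single operator $t - u^i$, whose action on $N(j)$ is the unit $u^{i-j}-1$ analyzed above.
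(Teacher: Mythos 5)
Your core computation is exactly the paper's: both proofs hinge on the free resolution $0 \to R \xrightarrow{\cdot(t-u^i)} R \to \ZZ_p(i) \to 0$ over $R = \ZZ_p[t,t^{-1}]$, which exhibits $\mathrm{Ext}^1_R(\ZZ_p(i),N(j))$ as the cokernel of multiplication by $u^j-u^i$ on $N$, a unit of $\ZZ_p$ when $p-1$ does not divide $i-j$ by precisely the argument of Proposition \ref{prop: no map}. Where you diverge is the treatment of the torsion summands. The paper dimension-shifts: an induction using the short exact sequence relating $\ZZ/p$, $\ZZ/p^{k+1}$ and $\ZZ/p^k$ reduces to $M=\ZZ/p$, and then $0 \to \ZZ_p \xrightarrow{\cdot p} \ZZ_p \to \ZZ/p \to 0$ reduces $\mathrm{Ext}^1_R(\ZZ/p(i),N(j))$ to the vanishing of \emph{both} $\Hom_R(\ZZ_p(i),N(j))$ and $\mathrm{Ext}^1_R(\ZZ_p(i),N(j))$ (so the $\Hom$ vanishing is genuinely needed in that route). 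You instead handle $\ZZ/p^k(i) = R/(t-u^i,\,p^k)$ directly, which does work, but not for the reason you give: your claim that $M(i)$ has projective dimension at most $1$ over $R$ is false for torsion $M$. Since $(t-u^i,\,p^k)$ is a regular sequence, $R/(t-u^i,p^k)$ has projective dimension exactly $2$ (the ring $R$ has global dimension $2$), so the "localization of a polynomial ring" remark does not do what you want. The error is harmless here because only $\mathrm{Ext}^1$ is at stake, but the torsion case then needs an actual computation rather than the phrase "contributes nothing new": take the Koszul resolution $0 \to R \to R^2 \to R \to \ZZ/p^k(i) \to 0$, apply $\Hom_R(-,N(j))$, and check that $\mathrm{Ext}^1$ is the set of pairs $(n_1,n_2)$ with $(u^j-u^i)n_2 = p^k n_1$ modulo the image of $n \mapsto ((u^j-u^i)n,\, p^k n)$; invertibility of $u^j-u^i$ makes kernel and image coincide, so $\mathrm{Ext}^1=0$. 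With that fix (or by falling back on the paper's dimension shift) your proof is complete. Incidentally, there is a slicker argument covering all cases at once, which neither you nor the paper uses: $t-u^i$ annihilates $M(i)$ yet acts invertibly on $N(j)$, hence acts both as zero and as an isomorphism on $\mathrm{Ext}^n_R(M(i),N(j))$ for every $n$, forcing all of these groups (and the $\Hom$) to vanish with no decomposition of $M$ whatsoever.
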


\begin{proof}
Since $M$ can be written as finite direct sums of copies of $\ZZ_p$ and $\ZZ/p^n$, wthout loss of generality, we may assume that $M$ is either $\ZZ_p$ or $\ZZ/p^n$. The short exact sequence
\[0\to \ZZ/p^n\to\ZZ/p^{n-1}\to\ZZ/p\to 0\]
induces an exact sequence
\[\mathrm{Ext}^1_{\mathbb{Z}_p[t,t^{-1}]}(\ZZ/p(i),N(j))\to\mathrm{Ext}_{\mathbb{Z}_p[t,t^{-1}]}^1(\ZZ/p^{n+1}(i),N(j))\]
\[\to\mathrm{Ext}^1_{\mathbb{Z}_p[t,t^{-1}]}(\ZZ/p^n(i),N(j))\]
Therefore, by induction on $n$, we can reduce the case $M=\ZZ/p^n$ to the case $M=\ZZ/p$. The short exact sequence
\[0\to\ZZ_p\xrightarrow{.p}\ZZ_p\to\ZZ/p\to 0\]
induces an exact sequence
\[\mathrm{Hom}_{\ZZ_p[t,t^{-1}]}(\ZZ_p(i),N(j))\to\mathrm{Ext}^1_{\mathbb{Z}_p[t,t^{-1}]}(\ZZ/p(i),N(j))\to\mathrm{Ext}^1_{\mathbb{Z}_p[t,t^{-1}]}(\ZZ_p(i),N(j))\]
therefore, the case $M=\ZZ/p$ follows from the vanishing of $\Hom_{\ZZ_p[t,t^{-1}]}(\ZZ_p(i),N(j))$ and of $\mathrm{Ext}^1_{\mathbb{Z}_p[t,t^{-1}]}(\ZZ_p(i),N(j))$. The vanishing of the $\mathrm{Hom}$-group is similar to the proof of Proposition \ref{prop: no map}. In order to compute the $\mathrm{Ext}$-group, we use the following projective resolution of $\ZZ_p(i)$ 
\[0\to \ZZ_p[t,t^{-1}]\xrightarrow{.(t-u^i)}\ZZ_p[t,t^{-1}]\to\ZZ_p\to 0.\]
We obtain an exact sequence
\[N\to N\to \mathrm{Ext}^1_{\ZZ_p[t,t^{-1}]}(\ZZ_p(i),N(j))\to 0\]
where the first map is multiplication  by $u^j-u^i$. Since $j-i$ is not a multiple of $(p-1)$, the $p$-adic number $u^j-u^i$ is a unit and the result follows.
\end{proof}

\begin{prop}\label{prop : trivial extensions}
Let $M$ be a $\ZZ_p[GT_p]$-module of finite type as a $\ZZ_p$-module. Let $n$ and $m$ be two integers with $0\leq n-m< p-1$. Assume that $M$ sits at the top of a tower of $\ZZ_p[GT_p]$-modules
\[M=M_n\xrightarrow{p_n} M_{n-1}\xrightarrow{p_{n-1}} \ldots\to M_m\xrightarrow{p_m} 0\]
where each map $p_k$ is surjective. Assume further that, for each $k$, the $GT_p$-action on the kernel $F_k$ of $p_k$ is cyclotomic of weight $k$. Then there is a non-canonical isomorphism of $\ZZ_p$-modules
\[M\cong F_n\oplus\ldots\oplus F_m \; .\]
\end{prop}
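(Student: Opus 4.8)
The plan is to reduce the splitting of this $GT_p$-equivariant tower to the vanishing of the $\mathrm{Ext}^1$-groups over $\ZZ_p[t,t^{-1}]$ established in the previous proposition, and then to run an induction on the length $n-m$. The first step is to choose an element $\gamma_0\in GT_p$ with $\chi(\gamma_0)=u$, which exists because the cyclotomic character is surjective. Restricting every $GT_p$-action to the action of $\gamma_0$ turns each module into a module over $\ZZ_p[t,t^{-1}]$, with $t$ acting as $\gamma_0$; under this restriction a $GT_p$-module that is cyclotomic of weight $k$ becomes precisely the module $F_k(k)$ in the notation introduced above, since $\gamma_0$ then acts by $\chi(\gamma_0)^k=u^k$.

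I would strengthen the statement to an isomorphism of $\ZZ_p[t,t^{-1}]$-modules $M\cong F_n(n)\oplus\cdots\oplus F_m(m)$, which suffices since such an isomorphism is in particular one of $\ZZ_p$-modules. The base case $n=m$ is immediate, as the tower reduces to $M=F_m$. For the inductive step, consider the $GT_p$-equivariant short exact sequence
\[0\to F_n\to M\xrightarrow{p_n} M_{n-1}\to 0 \; ,\]
which, after restriction, becomes a short exact sequence of $\ZZ_p[t,t^{-1}]$-modules $0\to F_n(n)\to M\to M_{n-1}\to 0$. The quotient $M_{n-1}$ sits at the top of the shorter tower $M_{n-1}\to\cdots\to M_m\to 0$, whose kernels are cyclotomic of weights $m,\ldots,n-1$ and whose length $(n-1)-m$ is again less than $p-1$; by the inductive hypothesis it splits as $M_{n-1}\cong\bigoplus_{k=m}^{n-1}F_k(k)$ over $\ZZ_p[t,t^{-1}]$.

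The extension above is then classified by an element of
\[\mathrm{Ext}^1_{\ZZ_p[t,t^{-1}]}(M_{n-1},F_n(n))\cong\bigoplus_{k=m}^{n-1}\mathrm{Ext}^1_{\ZZ_p[t,t^{-1}]}(F_k(k),F_n(n)) \; .\]
For each $k$ in this range we have $0<n-k\leq n-m<p-1$, so $(p-1)$ does not divide $n-k$; hence every summand vanishes by the previous proposition, using that each $F_k$ is of finite type. Therefore the sequence splits over $\ZZ_p[t,t^{-1}]$, yielding $M\cong F_n(n)\oplus\bigoplus_{k=m}^{n-1}F_k(k)$ and completing the induction.

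The point requiring care is the bookkeeping of weights: the induction must be carried with the strengthened $\ZZ_p[t,t^{-1}]$-equivariant conclusion, since it is precisely the $\ZZ_p[t,t^{-1}]$-module splitting of $M_{n-1}$ that lets the relevant $\mathrm{Ext}^1$ decompose into the summands to which the vanishing proposition applies — a mere $\ZZ_p$-splitting would not suffice to control the $t$-action. The hypothesis $n-m<p-1$ enters exactly here, guaranteeing that none of the weight differences $n-k$ is divisible by $p-1$, so that no nontrivial extension can occur at any stage of the tower.
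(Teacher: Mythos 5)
Your proof is correct, and it rests on the same two pillars as the paper's own argument: restricting the $GT_p$-action along a lift $t\in GT_p$ of a unit $u$ generating $\FF_p^\times$ modulo $p$, so that all modules in sight become $\ZZ_p[t,t^{-1}]$-modules, and then invoking the previous proposition's vanishing of $\mathrm{Ext}^1_{\ZZ_p[t,t^{-1}]}$ between cyclotomic modules whose weights differ by a non-multiple of $p-1$. The one genuine difference is organizational. The paper splits each stage $0\to F_k\to M_k\to M_{k-1}\to 0$ merely as $\ZZ_p$-modules: it observes that the obstruction class lies in the image of $\mathrm{Ext}^1_{\ZZ_p[t,t^{-1}]}(M_{k-1},F_k)\to \mathrm{Ext}^1_{\ZZ_p}(M_{k-1},F_k)$ and proves the source vanishes by an auxiliary induction up the tower using long exact sequences --- necessary because $M_{k-1}$ is not cyclotomic of a single weight, only an iterated extension of such modules. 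You instead strengthen the induction hypothesis to a $\ZZ_p[t,t^{-1}]$-equivariant splitting of $M_{n-1}$, which lets the relevant $\mathrm{Ext}^1$ group decompose by additivity into summands to which the proposition applies directly; this trades the paper's auxiliary long-exact-sequence induction for a stronger inductive statement, and as a bonus your conclusion is a $t$-equivariant direct sum decomposition rather than a bare $\ZZ_p$-module one. The only point you gloss over --- that each $F_k$ is of finite type, as required by the previous proposition --- is immediate: $M_k$ is a quotient of $M$, hence of finite type, and $F_k$ is a submodule of $M_k$ over the Noetherian ring $\ZZ_p$.
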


\begin{proof}
It suffices to prove that, for each $k$, the short exact sequence
\[0\to F_k\to M_k\to M_{k-1}\to 0\]
is split. The obstruction lies in the $\mathrm{Ext}$-group $\mathrm{Ext}^1_{\ZZ_p}(M_{k-1},F_k)$. Since this short exact sequence is a sequence of  $\ZZ_p[GT_p]$-modules, the obstruction lies in the image of the obvious map
\[\mathrm{Ext}^1_{\ZZ_p[GT_p]}(M_{k-1},F_k)\to \mathrm{Ext}^1_{\ZZ_p}(M_{k-1},F_k)\;.\]
Pick an element $u$ of $\ZZ_p^{\times}$ such that the residue of $u$ modulo $p$ is a generator of $\FF_p^\times$. Pick a lift $t$ of $u$ in $GT_p$. This choice induces a homomorphism of $\ZZ_p$-algebras
\[\ZZ_p[t,t^{-1}]\to \ZZ_p[GT_p]\]
If we restrict along this map, the $\ZZ_p[GT_p]$-module $F_k$ is simply the module $F_k(k)$. Hence the obstruction lies in the image of the obvious map
\[\mathrm{Ext}^1_{\ZZ_p[t,t^{-1}]}(M_{k-1},F_k(k))\to \mathrm{Ext}^1_{\ZZ_p}(M_{k-1},F_k)\]
and it is enough to show that the group $\mathrm{Ext}^1_{\ZZ_p[t,t^{-1}]}(M_{k-1},F_k(k))$ is zero. We will in fact prove that the group $\mathrm{Ext}^1_{\ZZ_p[t,t^{-1}]}(M_{i},F_k)=0$ for any $m\leq i<k$. We prove this by induction on $i$. For $i=m$, we have $M_m=F_m(m)$ and so
\[\mathrm{Ext}^1_{\ZZ_p[t,t^{-1}]}(M_m,F_{k}(k))=0\]
by the previous proposition. If the statement is true for $i$, then, we use the long exact sequence induced by the short exact sequence of $\ZZ_p[t,t^{-1}]$-modules
\[0\to F_{i+1}(i+1)\to M_{i+1}\to M_i\to 0\]
and the previous proposition to deduce the statement for $i+1$.
\end{proof}

Using this proposition and the theorem below, Corollary C from the introduction will follow. We write $T_{\leq n}$ for the Goodwillie-Weiss tower truncated in degree $n$.

\begin{thm}\label{thmC}
Let $n$ be a positive integer. For any prime number $p$ satisfying $n \leq (p-1)(d-2) + 3$, there is a (non-canonical) isomorphism
\[
\pi_i T_{n} \emb_c(\RR^1, \RR^d) \otimes \ZZ_{(p)} \cong \oplus_{t-s = i} E^2_{-s,t}(T_{\leq n})\otimes\ZZ_{(p)} \; 
\]
for $i \leq 2(p + d) - 5$.
\end{thm}

\begin{proof}
The analogous statement with $\ZZ_{(p)}$ replaced by $\ZZ_p$ follows from Corollary \ref{cor : collapse for T_n} and the previous proposition. If $M$ and $N$ are finitely generated $\ZZ_{(p)}$-modules, the canonical map
\[\mathrm{Ext}_{\ZZ_{(p)}}(M,N)\otimes\ZZ_p\to\mathrm{Ext}_{\ZZ_p}(M\otimes\ZZ_p,N\otimes\ZZ_p)\]
is an isomorphism. This can be reduced to the case where $M$ and $N$ are either $\ZZ_{(p)}$ or $\ZZ/p^k$ in which case this is an easy computation. It follows that if an extension of $\ZZ_{(p)}$-modules splits over $\ZZ_p$, it also splits over $\ZZ_{(p)}$.
\end{proof}

\begin{proof}[Proof of Corollary C]
Goodwillie-Klein excision estimates imply that the map 
\[
\emb_c(\RR^1, \RR^d) \to T_n \emb_c(\RR^1, \RR^d)
\] is $n(d-3)$-connected (see \cite[Theorem A]{goodwilliemultiple}). Setting $n = (p-1)(d-2) + 3$ and combining this with Theorem \ref{thmC} we have that
\[
\pi_i \emb_c(\RR^1, \RR^d) \otimes \ZZ_{(p)} \cong \oplus_{t-s = i} E^2_{-s,t}(T_{\leq n}) \otimes\ZZ_{(p)}
\]
for

\[
i < \min(2(p + d) - 4, n(d-3)).
\]
This $\min$ is equal to $n=2p+1$ if $d=4$ and to $2(p + d) - 4$ if $d>4$.

Finally, since the restriction maps $T_{s} \emb_c(\RR^1, \RR^d) \to T_{s-1} \emb_c(\RR^1, \RR^d)$, for $s > n$, are $n(d-3)$-connected, it follows that the canonical map
\[
 E^1_{-s,t}(T) \to E^1_{-s,t}(T_{\leq n}) 
\]
is an isomorphism for $t-s < n(d-3)$. Hence, $E^2_{-s,t}(T) \cong E^2_{-s,t}(T_{\leq n})$ for $t - s < n(d-3) - 1$.
\end{proof}

\begin{rem}
Observe that, in the case $d=4$, the proof above actually gives a computation of one more homotopy group of $\emb_c(\RR^1, \RR^4)$. Namely, we have 
\[\pi_{2p}\emb_c(\RR^1, \RR^4)\otimes\ZZ_{(p)}\cong\bigoplus_{t-s=2p}E^2_{-s,t}(T_{\leq 2p+1})\otimes\ZZ_{(p)}.\]
\end{rem}

\section{Universality}

We now come to the proof of theorem A from the introduction. To lighten notations, we denote the space of knots $\emb_c(\RR^1, \RR^3)$ by $K$ in this section, as we did in the introduction. Given two knots $f, g$, we write $f \sim_{n-1} g$ if $f$ and $g$ share the same type $n-1$ invariants. This defines an equivalence relation on $\pi_0 K$; the set of equivalences classes is denoted $\pi_0(K)/\!\!\sim_{n-1}$. It is not hard to see that the operation of concatenation makes this set into a commutative monoid. This commutative monoid is in fact a finitely generated abelian group as shown in \cite{Gusarov}.

\begin{defn}
Let $R$ be a commutative ring. An \emph{additive Vassiliev invariant} of type $(n-1)$  over $R$ is a map
\[I:\pi_0K\to M\]
with $M$ an $R$-module such that 
\begin{itemize}
\item $I$ is a morphism of monoids,
\item If $f \sim_{n-1} g$, then $I(f)=I(g)$.
\end{itemize}
An additive Vassiliev invariant of type $(n-1)$ is called \emph{universal} if any other additive Vassiliev invariant of type $(n-1)$ has a unique factorization through it.
\end{defn}

Any two universal additive Vassiliev invariants of type $(n-1)$ are uniquely isomorphic. Tautologically, the quotient map
\[\pi_0(K)\to \pi_0(K)/\!\!\sim_{n-1}\]
is a universal additive Vassiliev invariant of type $(n-1)$ over $\mathbb{Z}$ and the composite
\[\pi_0(K)\to \pi_0(K)/\!\!\sim_{n-1}\to(\pi_0(K)/\!\!\sim_{n-1})\otimes R\]
is a universal additive Vassiliev invariant of type $(n-1)$ over $R$.

\begin{thm}[Kosanović]\label{thm:collapse implies universal}
Let $R$ be a commutative ring that is torsion-free (e.g. $\ZZ$, $\QQ$, $\ZZ_p$, $\ZZ_{(p)}$). The evaluation map
\[e_n : \pi_0 K \to \pi_0 T_n K\otimes R\]
is a universal Vassiliev invariant of degree $n-1$ over $R$ if the canonical map
\[
E^2_{-k,k} \otimes R \to E^\infty_{-k,k}  \otimes R
\]
is an isomorphism for all $k \leq n$.
\end{thm}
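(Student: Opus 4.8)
The plan is to connect the collapse statement about the anti-diagonal of the Goodwillie--Weiss spectral sequence to the universality of the evaluation map $e_n$, using the filtration on $\pi_0 T_n K$ coming from the tower. The key structural input is the identification $E^2_{-k,k} \cong \mathcal A^I_{k-1}$ recalled in the introduction, together with the comparison ladder between the Vassiliev tower and the Goodwillie--Weiss tower provided by \cite{BCKS,CTgrope,Kosanovic,KST}. First I would recall that $\pi_0 T_n K \otimes R$ carries a finite filtration whose associated graded pieces are the groups $E^\infty_{-k,k} \otimes R$ for $k \leq n$ (this is exactly the $t-s=0$ anti-diagonal, since $\pi_0 = \pi_{t-s}$ with $t=s=k$), and that convergence holds because the connectivity of the layers grows. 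The map $e_n$ is compatible with the comparison ladder, so it suffices to analyze how $e_n$ interacts with this filtration stage by stage.

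The heart of the argument is to show that, under the hypothesis, the filtration quotients of $\pi_0 T_n K \otimes R$ are precisely $\mathcal A^I_{k-1} \otimes R$ and that $e_n$ realizes the universal invariant on each graded piece. Here I would invoke the hypothesis that $E^2_{-k,k}\otimes R \to E^\infty_{-k,k}\otimes R$ is an isomorphism for every $k\leq n$: this says no differentials enter or leave the anti-diagonal in this range, so the associated graded of $\pi_0 T_n K \otimes R$ is $\bigoplus_{k\leq n} E^2_{-k,k}\otimes R \cong \bigoplus_{k \leq n}\mathcal A^I_{k-1}\otimes R$. The comparison with the Vassiliev tower then identifies the composite $\mathcal A^I_{k-1}\to \ker[\pi_0(K)/\!\!\sim_{k-1}\to \pi_0(K)/\!\!\sim_{k-2}]\to \mathrm{gr}_k \pi_0 T_n K$ with the surjection recalled in the introduction; the collapse hypothesis forces this surjection to be an isomorphism after $\otimes R$, which is exactly the statement that the type $(k-1)$ part of $e_n$ is universal. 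Running this inductively down the filtration, using that each graded isomorphism lifts to a statement about the extension being detected by the universal invariant, gives that $e_n$ is a universal Vassiliev invariant of degree $n-1$ over $R$.

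The main obstacle I expect is the passage from the graded statement to the genuine (ungraded) universality of $e_n$, i.e.\ controlling the extension problem along the filtration. Knowing that each associated graded piece is $\mathcal A^I_{k-1}\otimes R$ and matches the corresponding Vassiliev graded piece does not by itself guarantee that $e_n$ induces an isomorphism of filtered objects; one must check that the comparison ladder is an isomorphism of filtered groups, not merely of associated gradeds. I would handle this by appealing to the already-established fact (from \cite{BCKS,KST} and the necessary-condition direction encoded in Theorem~\ref{thm:collapse implies universal}'s setup) that the vertical maps in the ladder are filtered and compatible with the surjections $\mathcal A^I_s \to \mathrm{gr}$, then use the torsion-freeness of $R$ together with an induction on the filtration length: an isomorphism on the bottom piece and on each successive graded quotient yields an isomorphism of the whole filtered object by the five lemma applied to the short exact sequences $0\to F^{k}\to F^{k-1}\to \mathrm{gr}\to 0$. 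The torsion-freeness hypothesis on $R$ is what makes these short exact sequences behave well under $\otimes R$, so I would be careful to use it precisely at the point of tensoring the integral ladder.
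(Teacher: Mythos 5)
Your argument is essentially the paper's own proof: both hinge on the Conant--Teichner surjection $E^2_{-k,k}\cong\mathcal{A}^I_{k-1}\to\ker[\pi_0(K)/\!\!\sim_{k-1}\,\to\,\pi_0(K)/\!\!\sim_{k-2}]$, on Kosanović's identification of the composite of this surjection with the map to $E^\infty_{-k,k}$ as the canonical spectral-sequence map (so that the collapse hypothesis forces both maps to be isomorphisms after $\otimes R$), and on a five-lemma induction up the two towers, with torsion-freeness (i.e.\ flatness) of $R$ ensuring the relevant short exact sequences stay exact after tensoring. The only slip is your justification of convergence: for $d=3$ the connectivity of the layers does \emph{not} grow, but no such hypothesis is needed, since the tower under $T_n$ is finite and hence the finite filtration of $\pi_0 T_n K$ with graded pieces $E^\infty_{-k,k}$, $k\leq n$, is automatic.
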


\begin{proof}
In \cite{BCKS}, and \cite{KST}, it it was shown that $e_n$ is well-defined on equivalence classes and as such factors through
$\overline{e}_n : (\pi_0(K)/\!\!\sim_{n-1}) \to \pi_0 T_n K$. The statement of the theorem is equivalent to the statement that $\overline{e}_n \otimes R$ is an isomorphism. 

We argue inductively, using the commutative square of group homomorphisms
\begin{equation*}
	\begin{tikzpicture}[descr/.style={fill=white}, baseline=(current bounding box.base)]
	\matrix(m)[matrix of math nodes, row sep=2.5em, column sep=5em,
	text height=1.5ex, text depth=0.5ex]
	{
	{} \pi_0(K)/\!\!\sim_{n-1} & \pi_0 T_n K	\\
	{}\pi_0(K)/\!\!\sim_{n-2} & \pi_0 T_{n-1} K	\\
	};
	\path[->,font=\scriptsize]
		(m-1-1) edge node [auto] {$\overline{e}_n$} (m-1-2)
		(m-1-1) edge node [auto] {} (m-2-1)
		(m-2-1) edge node [auto] {$\overline{e}_{n-1}$} (m-2-2)
		(m-1-2) edge node [auto] {} (m-2-2);
	\end{tikzpicture}
\end{equation*}
and compare the map between vertical kernels
\[
\Phi_{n-1} \to E^{\infty}_{-n,n}
\]
where $\Phi_{n-1}$ denotes the kernel of the left vertical map. In \cite{CTgrope}, Conant-Teichner construct a surjective homomorphism $R_n : E^2_{-n,n} \to \Phi_{n-1}$ and in \cite{Kosanovic}, Kosanović shows that the composition
\[
E^2_{-n,n} \to \Phi_{n-1} \to E^\infty_{-n,n} \;
\]
agrees with the canonical map. Since the composition is an isomorphism by hypothesis, we then have that both maps are isomorphisms. It follows by an application of the five lemma that $\overline{e}_n \otimes R$ is an isomorphism if the homomorphism $E^2_{-k,k} \otimes R \to E^\infty_{-k,k} \otimes R$ is an isomorphism for all $k\leq n$, as claimed.
\end{proof}

\begin{proof}[Proof of Theorem A]
The first part follows from Theorem \ref{thm:collapse implies universal} in conjunction with Theorem B. The second part is subsumed by Theorem \ref{thmC}.
\end{proof}

\section{The homology Goodwillie-Weiss spectral sequence}\label{sec:homology}
The Goodwillie-Weiss tower for $\overline{\emb}_c(\RR,\RR^d)$, the homotopy fiber of the inclusion
\[\emb_c(\RR,\RR^d)\to \mathrm{imm}_c(\RR,\RR^d)\;,\]
can also be described as follows. Consider the category $\sO_{k}$ whose objects are open proper subsets $T$ of $\RR$ containing $(-\infty,0]\cup[1,+\infty)$ and having at most $k+2$ connected components. A morphism $T \to R$ is an isotopy connecting $T \subset \RR$ to a subset of $R \subset \RR$, and which fixes $(-\infty,0]\cup[1,+\infty)$ pointwise. Then 
\[
T_k \overline{\emb}_c(\RR^1, \RR^d) = \holimsub{T \in \sO_{k}^\op} \overline{\emb}_c(T, \RR^d) \; .
\]
The Goodwillie-Weiss tower agrees with the tower associated to a cosimplicial space, the one coming from the usual filtration of $\Delta$ by the subcategories $\Delta_{\leq k}$ spanned by the objects $[n]$ satisfying $n \leq k$. This was first proved by Sinha \cite{sinhaoperads}. Roughly, the relation comes from the equivalence between the topological category $\mathcal{O}_{k}$ and the opposite of $\Delta_{\leq k}$. It follows that the functor $\overline{\emb}_c(-,\RR^d)$ defines a cosimplicial space whose homotopy limit is the limit of the Goodwillie-Weiss tower. In this section, we review an operadic construction of this cosimplicial space and use it to study the associated homology Bousfield-Kan spectral sequence.

\medskip
Recall that a multiplicative operad is a non-symmetric topological or simplicial operad $P$ together with the data of a map $\alpha:A\to P$ where $A$ denotes the non-symmetric associative operad. From such data, one can construct a cosimplicial space $X^\bullet$ as we now recall (see \cite[Definition 2.17]{sinhaoperads}). We denote by $u$ the image in $P(0)$ of the unique point in $A(0)$ and by $m$ the image in $P(2)$ of the unique point in $A(2)$.

In degree $q$, our cosimplicial space will be given by $X^q=P(q)$. The cofaces $d^i:P(q)\to P(q+1)$ are as follows. The two outer cofaces $d^0$ and $d^{q+1}$ take $x\in P(q)$ to $m\circ_1 x$ and $m\circ_2x$ respectively. The inner coface $d^i$ with $i\in\{1,\ldots,q\}$ takes $x$ to $x\circ_i m$. The codegeneracy $s^i: P(q)\to P(q-1)$ takes $x\in P(q)$ to  $x\circ_iu$.

In what follows, we will start with a map $\tilde{A}\to P$ where $\tilde{A}$ is not quite the associative operad but is merely weakly equivalent to it. The following proposition will be useful in that situation. We denote by $\cat{Op}_{ns}$ the category of non-symmetric operads in simplicial sets.

\begin{prop}\label{prop: rigidification}
Let $\tilde{A}$ be a non-symmetric operad and let $w:\tilde{A}\to A$ be a map of non-symmetric operads. Consider the adjunction
\[w_!:\cat{Op}_{ns}^{\tilde{A}/}\leftrightarrows \cat{Op}_{ns}^{A/}:w^*\]
where $w^*$ is precomposition by $w$ and $w_!$ is its left adjoint. Then, if $w$ is a weak equivalence, the adjunction is a Quillen equivalence.
\end{prop}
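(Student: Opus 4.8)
The plan is to verify the Quillen-equivalence criterion by showing that $w^*$ is a right Quillen functor which moreover \emph{reflects} all weak equivalences, and then analysing the unit of the adjunction. Recall that the coslice categories $\cat{Op}_{ns}^{\tilde{A}/}$ and $\cat{Op}_{ns}^{A/}$ carry the model structures created by the forgetful functors to $\cat{Op}_{ns}$, so that cofibrations, fibrations and weak equivalences are all detected on underlying operads. Since $w^*$ is precomposition by $w$, it leaves underlying operads and underlying maps unchanged; hence it preserves fibrations and trivial fibrations, so $(w_!, w^*)$ is automatically a Quillen adjunction, and $w^*$ preserves and reflects \emph{every} weak equivalence. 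The left adjoint $w_!$ is computed by pushout: it sends an object $(\tilde{A} \to Q)$ to the structure map $A \to A \sqcup_{\tilde A} Q$.

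I would then invoke the standard criterion that a Quillen adjunction is a Quillen equivalence as soon as (i) the right adjoint reflects weak equivalences between fibrant objects, and (ii) the derived unit is a weak equivalence on every cofibrant object. Condition (i) is immediate from the previous paragraph, since $w^*$ reflects all weak equivalences, not merely those between fibrant objects. For (ii), observe that an object $(\tilde A \to Q)$ is cofibrant in $\cat{Op}_{ns}^{\tilde{A}/}$ exactly when its structure map $\tilde A \to Q$ is a cofibration of non-symmetric operads. The unit at such an object has underlying map the canonical arrow $Q \to A \sqcup_{\tilde A} Q$, and because $w^*$ preserves weak equivalences the fibrant-replacement step contributes nothing; thus the derived unit is a weak equivalence precisely when $Q \to A \sqcup_{\tilde A} Q$ is one.

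The heart of the argument, and the step I expect to be the main obstacle, is therefore to prove that for a cofibration $\tilde A \to Q$ the pushout
\[
Q \longrightarrow A \sqcup_{\tilde A} Q
\]
of the weak equivalence $w \colon \tilde A \to A$ along it is again a weak equivalence. This is exactly the left-properness assertion for $\cat{Op}_{ns}$, applied to the pushout square having $w$ along the top edge and the cofibration $\tilde A \to Q$ along the left. I would justify it by appealing to left properness of the arity-wise (transferred) model structure on non-symmetric simplicial operads, where all simplicial sets are cofibrant and weak equivalences and fibrations are created arity-wise. Should left properness not be available in exactly the needed generality, the fallback is to argue directly: factor the cofibration $\tilde A \to Q$ as a transfinite composition of pushouts of free-operad generating cofibrations, and show by induction over these cell attachments that each stage carries the weak equivalence $w$ to a weak equivalence, using that the free non-symmetric operad functor over $\mathbf{sSet}$ is built from planar trees and so interacts well with arity-wise equivalences. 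Granting this, condition (ii) holds and the Quillen-equivalence criterion is satisfied, completing the proof.
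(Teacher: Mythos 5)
Your overall route is the same as the paper's: identify $w_!$ as the pushout functor sending $(\tilde A \to Q)$ to $A \to A \sqcup_{\tilde A} Q$, note that $w^*$ leaves underlying operads unchanged (hence the adjunction is Quillen and $w^*$ preserves and reflects all weak equivalences), and reduce the Quillen-equivalence claim to the statement that pushing out the weak equivalence $w$ along a cofibration $\tilde A \to Q$ is again a weak equivalence, i.e.\ to left properness of $\cat{Op}_{ns}$. The paper compresses everything up to that point into the phrase ``completely standard,'' so your unfolding of the derived-unit criterion is a faithful (and correct) expansion of what the paper leaves implicit.

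The genuine gap is in your justification of left properness, which you correctly identify as the heart of the matter. Your primary argument --- that left properness holds ``because all simplicial sets are cofibrant and weak equivalences and fibrations are created arity-wise'' --- does not work. The principle you are implicitly invoking is that a model category in which \emph{every object} is cofibrant is left proper; but the objects here are operads, not simplicial sets, and in the transferred model structure on $\cat{Op}_{ns}$ most operads are \emph{not} cofibrant. Nor does left properness of simplicial sets transfer formally: pushouts in $\cat{Op}_{ns}$ are not computed arity-wise (attaching cells to an operad freely generates new composite operations in all arities), so the arity-wise detection of weak equivalences gives no direct control over such pushouts. Left properness of $\cat{Op}_{ns}$ is a nontrivial theorem, and it is precisely what the paper cites (Corollary 1.12 of Muro's work on the homotopy theory of non-symmetric operads). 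Your fallback --- induction over a cellular presentation of the cofibration $\tilde A \to Q$, analysing pushouts of free-operad generating cofibrations --- is indeed the strategy behind proofs of this kind, but it is the substantive technical content (one must filter such pushouts and track the newly generated composites), not a step one can dispatch in a sentence. So your argument is structurally correct and parallel to the paper's, but it rests on an input you have not established; replacing your two justification attempts by the citation to Muro makes it complete.
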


\begin{proof}
First, we easily verify that the left adjoint sends $\tilde{A}\to P$ to the bottom map in the following pushout square.
\[
\xymatrix{
\tilde{A}\ar[r]\ar[d]& P\ar[d]\\
A\ar[r]& Q
}
\]
The result is then completely standard once we know that the model category $\cat{Op}_{ns}$ is left proper (see \cite[Corollary 1.12]{murohomotopy}).
\end{proof}

Denote by $E_1^{ns}$ the non-symmetric little $1$-disks operad (i.e. the operad whose symmetrization is the little $1$-disks operad). Denote by $E_d$ the non-symmetric operad underlying the symmetric little $d$-disks operad. Fix a linear inclusion $\RR\to \RR^d$. This induces a map of non-symmetric operads $E_1^{ns}\to E_d$. The unique map $w:E_1^{ns}\to A$ being a weak equivalence, we can apply the derived functor of $w_!$ (using the notation of Proposition \ref{prop: rigidification}) to it and get a map $A\to P$ for some non-symmetric operad $P$. Moreover, by Proposition \ref{prop: rigidification} this map is weakly equivalent to $E_1^{ns}\to E_d$ in the arrow category of $\cat{Op}^{ns}$. The cosimplicial space associated to this map is what we will denote by $K_d^\bullet$. It is identified with the cosimplicial space mentioned in the beginning of the section.

We can do the exact same construction but starting from the map $L_pE_1\to L_pE_d$ (noticing that $L_pE_1\simeq E_1$) and we obtain a cosimplicial space that we denote by $L_pK_d^\bullet$. Since the map $L_p E_1 \to L_p E_d$ is $GT_p$-equivariant (Theorem \ref{thm:GtonLpE}), the cosimplicial space $L_pK_d^\bullet$ gets a $GT_p$-action.

Applying the functor $C_*(-,R)$ to the cosimplicial space $K^\bullet$, we get a bicomplex and hence a spectral sequence $E^*_{*,*}(R)$ with
\[E^1_{p,q}(R)=H_q(K_d^{-p},R)\]
and whose $d_1$ is the alternating sum of the coface maps. Our main theorem about this spectral sequence is the following (Theorem D in the introduction).

\begin{thm}\label{thm: homology}
Let $p$ be a prime number. The only possibly non-trivial differentials in the spectral sequence $E^*(\ZZ_{(p)})$ are $d_{1+n(d-1)(p-1)}$ for $n \geq 0$.
\end{thm}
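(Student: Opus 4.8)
The plan is to run exactly the same argument as in the homotopy case (Theorem \ref{thm:htpyBKGW}), but now at the level of homology and with the Poisson-operad structure of $H_*(E_d)$ making the weight bookkeeping cleaner. The key structural input is Corollary \ref{cor:action on tower}: the cosimplicial space $L_pK^\bullet$ carries a $GT_p$-action, so the associated homology Bousfield-Kan spectral sequence $E^*(\ZZ_p)$ inherits a $GT_p$-action on every page, compatible with all differentials. Because $E^*(\ZZ_{(p)})\otimes_{\ZZ_{(p)}}\ZZ_p\cong E^*(\ZZ_p)$ and a map of finitely generated $\ZZ_{(p)}$-modules vanishes iff it vanishes after $\otimes_{\ZZ_{(p)}}\ZZ_p$ (as in the proof of Theorem B), it suffices to prove the vanishing statement for $E^*(\ZZ_p)$.

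First I would identify the weight of the $GT_p$-action on the $E^1$-page. By construction $E^1_{p,q}(\ZZ_p)=H_q(L_pK_d^{-p},\ZZ_p)=H_q(L_pE_d(-p),\ZZ_p)$, and by Proposition \ref{prop: weight} the action on $H_{n(d-1)}(E_d(k),\ZZ_p)$ is cyclotomic of weight $n$. Thus $E^1_{p,q}$ is concentrated in internal degrees $q$ that are multiples of $(d-1)$, and on $E^1_{p,n(d-1)}$ the action is cyclotomic of weight $n$. The $d_1$-differential is the (signed) alternating sum of the coface maps, all of which come from operadic compositions in $E_d$ involving the multiplication $m$ and unit $u$; these are $GT_p$-equivariant and preserve internal homological degree, hence preserve weight. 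It follows that the entire $E^2$-page, and inductively every $E^r$-page, is supported in internal degrees divisible by $(d-1)$, with $E^r_{p,n(d-1)}$ carrying the weight-$n$ cyclotomic action (using Proposition \ref{prop: submodule and quotient} to propagate the weight to subquotients).

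Next I would analyze which differentials are permitted. The differential $d_r$ has bidegree $(r, r-1)$ in the $(p,q)$-indexing of this homological spectral sequence, so it maps $E^r_{p,q}\to E^r_{p+r,q+r-1}$; concretely it sends internal degree $q$ to internal degree $q+r-1$. For the source to be nonzero we need $q=n(d-1)$ and for the target to be nonzero we need $q+r-1=n'(d-1)$ for some integers $n,n'$, which forces $r-1\equiv 0 \pmod{d-1}$, say $r-1=m(d-1)$, and then the target has weight $n'=n+m$. Now Proposition \ref{prop: no map} applies: a $GT_p$-equivariant map from a weight-$n$ module to a weight-$(n+m)$ module must vanish whenever $m=n'-n=\frac{r-1}{d-1}$ is not a multiple of $p-1$. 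Therefore $d_r=0$ unless $r-1$ is a multiple of $(d-1)$ and the quotient $\frac{r-1}{d-1}$ is in addition a multiple of $p-1$, i.e. unless $r-1$ is a multiple of $(d-1)(p-1)$. Writing $r-1=n(d-1)(p-1)$ for $n\geq 0$ gives precisely the claimed list of possibly nonzero differentials $d_{1+n(d-1)(p-1)}$.

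The part requiring the most care, and the only place where this homological argument differs substantively from the homotopical one, is the unconditional concentration of the spectral sequence in internal degrees divisible by $(d-1)$: in the homotopy case one only controlled a finite range via Serre's $p$-torsion-freeness theorem (the bound $N$), whereas here Proposition \ref{prop: weight} gives the weight statement for \emph{all} $k$ with no range restriction, because the homology of $E_d$ is the $d$-Poisson operad with generators in degrees $0$ and $d-1$ only. The main obstacle is thus essentially bookkeeping: verifying that $d_1$, and hence all higher differentials, strictly preserve the weight grading, which amounts to checking that every coface and codegeneracy in Sinha's cosimplicial structure is induced by $GT_p$-equivariant operadic operations (composition with $m\in P(2)$ and $u\in P(0)$). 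Once that equivariance is in place, the combination of Proposition \ref{prop: no map} with the degree count closes the argument with no convergence or finiteness subtleties beyond those already handled in passing from $\ZZ_{(p)}$ to $\ZZ_p$.
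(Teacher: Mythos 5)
Your overall strategy is exactly the paper's: reduce from $\ZZ_{(p)}$ to $\ZZ_p$ as in the proof of Theorem B, observe that the $E^1$-page is concentrated in internal degrees $q=n(d-1)$ with cyclotomic weight $n$ (Proposition \ref{prop: weight}), propagate this to all pages by Proposition \ref{prop: submodule and quotient}, and conclude by the degree count together with Proposition \ref{prop: no map}; your arithmetic ($r-1$ must be divisible by $d-1$ for degree reasons and by $(d-1)(p-1)$ for weight reasons) is correct and is precisely what the paper leaves implicit in the phrase ``the result now follows from Proposition \ref{prop: no map}.''

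There is, however, one genuine gap, and it sits at the step you dismiss as bookkeeping: how the $GT_p$-action gets onto the spectral sequence $E^*(\ZZ_p)$ in the first place. That spectral sequence is by definition obtained by applying $C_*(-,\ZZ_p)$ to the \emph{uncompleted} cosimplicial space $K^\bullet$, whereas the $GT_p$-action lives on $L_pK^\bullet$. Your claim that ``by construction $E^1_{p,q}(\ZZ_p)=H_q(L_pK_d^{-p},\ZZ_p)$'' is not true by construction, and is not even true as stated without care: taking $\ZZ_p$-chains does not commute with $p$-completion in general (the paper's remark after Proposition \ref{prop: p adics} gives the rationalized circle as a counterexample, where $H_1(X,\ZZ_p)\cong\QQ_p$ while $L_pX$ is contractible). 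The correct bridge is Proposition \ref{prop: p adics}: under the finite-type hypothesis (satisfied by the configuration spaces $K^q\simeq E_d(q)$), the composite $C_*(X,\ZZ_p)\to\lim_n C_*(L_pX,\ZZ/p^n)$ is a quasi-isomorphism, so the action on $L_pK^\bullet$ induces one on a bicomplex quasi-isomorphic to the defining one, hence on the spectral sequence. Citing Corollary \ref{cor:action on tower} does not do this job either, since that corollary concerns the homotopy spectral sequence of the tower $T'(\ZZ_p)$, not the homology spectral sequence of the cosimplicial space. Once this transfer is justified via Proposition \ref{prop: p adics}, the rest of your argument closes as written.
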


\begin{proof}
As in the proof of Theorem B, it suffices to prove the theorem for the spectral sequence $E^*(\ZZ_p)$.  As we said above, we have a $GT_p$-action on the cosimplical space $L_pK^\bullet_d$. Using Proposition \ref{prop: p adics}, we obtain an action on the cosimplicial chain complex $C_*(K_d^\bullet, \ZZ_p)$ and hence on the spectral sequence $E^*(\ZZ_p)$.  Moreover, the action on $E^*_{*,(d-1)k}$ is cyclotomic of weight $k$. Indeed, this is true on the $E^1$ page by Proposition \ref{prop: weight} and therefore on any page by Proposition \ref{prop: submodule and quotient}. The result now follows from Proposition \ref{prop: no map}.
\end{proof}

In particular, we recover the following theorem.

\begin{cor}[Lambrechts-Turchin-Voli\'{c}, \cite{lambrechtsrational}]
The spectral sequence $E^*(\QQ)$ collapses at the second page.
\end{cor}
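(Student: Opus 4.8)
The plan is to deduce the corollary for $E^*(\QQ)$ from the integral (or $p$-local) statement of the theorem we have just proved, by a standard ``large prime'' argument identical in spirit to the proof of Corollary \ref{cor: collapse over Q}. The theorem tells us that, over $\ZZ_{(p)}$, the only differentials that can possibly be non-zero are the $d_{1+n(d-1)(p-1)}$ for $n\geq 0$. We would first observe that $d_1$ is always present (it is the $n=0$ case) and is part of the passage to the $E^2$-page, so the content to extract is collapse at $E^2$, i.e.\ the vanishing of every $d_r$ with $r\geq 2$.

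First I would fix an arbitrary differential $d_r$ with $r\geq 2$ in the rational spectral sequence $E^*(\QQ)$, sitting in a given bidegree $(p,q)$. The key compatibility is that, just as in the proof of Theorem B, the rational spectral sequence is obtained from the $\ZZ_{(p)}$-linear one by tensoring: for a suitable interpretation there is an isomorphism $E^*_{*,*}(\QQ)\cong E^*_{*,*}(\ZZ_{(p)})\otimes_{\ZZ_{(p)}}\QQ$ compatible with the differentials, and hence a rational differential vanishes as soon as the corresponding $\ZZ_{(p)}$-linear differential does. So it suffices to show that for each fixed $r\geq 2$ there exists at least one prime $p$ for which the theorem forces $d_r$ to vanish $p$-locally.

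Next I would choose the prime $p$ large. By the theorem, $d_r$ can be non-zero over $\ZZ_{(p)}$ only if $r$ is of the form $1+n(d-1)(p-1)$ for some $n\geq 0$; equivalently, only if $r-1$ is a multiple of $(d-1)(p-1)$. For a fixed $r\geq 2$ the quantity $r-1$ is a fixed positive integer, so as soon as we take a prime $p$ with $(d-1)(p-1)>r-1$ (which is possible since $d\geq 3$ and there are infinitely many primes), the only multiple of $(d-1)(p-1)$ that could equal $r-1$ would be $0$, contradicting $r-1>0$. Hence $r-1$ is not a multiple of $(d-1)(p-1)$, the theorem applies, and $d_r$ vanishes over $\ZZ_{(p)}$, therefore also over $\QQ$. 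Since $r\geq 2$ was arbitrary, all higher differentials vanish and the spectral sequence collapses at $E^2$.

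I do not expect any serious obstacle here: the argument is purely numerical once the theorem and the tensoring compatibility are in hand. The only point requiring a little care is the comparison between the rational and $\ZZ_{(p)}$-linear spectral sequences, so that vanishing $p$-locally implies vanishing rationally; this is exactly the flatness/exactness observation used in Corollary \ref{cor: collapse over Q} and in the proof of Theorem B, and it transfers verbatim to the homology spectral sequences $E^*(\ZZ_{(p)})$ and $E^*(\QQ)$ because $\QQ$ is flat over $\ZZ_{(p)}$.
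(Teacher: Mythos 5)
Your proof is correct and is essentially the paper's own argument: the paper simply says the proof is analogous to that of Corollary \ref{cor: collapse over Q}, i.e.\ the same large-prime argument you give (for fixed $r\geq 2$, pick $p$ with $(d-1)(p-1)>r-1$ so that Theorem D kills $d_r$ over $\ZZ_{(p)}$, hence over $\QQ$ by flatness/localization). Your write-up just makes explicit the numerics and the compatibility $E^*(\QQ)\cong E^*(\ZZ_{(p)})\otimes_{\ZZ_{(p)}}\QQ$ that the paper leaves implicit.
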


\begin{proof}
The proof is analogous to the proof of Corollary \ref{cor: collapse over Q}.
\end{proof}

\begin{cor}
For $d \geq 4$ and $p$ a prime, there is an isomorphism
\[
H_i(\overline{\emb}_c(\RR,\RR^d); \ZZ_{(p)}) \cong \bigoplus_{t-s = i} E^2_{-s,t}\otimes\ZZ_{(p)}
\]
for $i \leq (d-3)\left[(p-1)(d-2)+1\right]$.
\end{cor}
\begin{proof}
The  $E^2$ page is zero outside the region bounded by two lines, $t = s(d-1)/2$ (denoted $L$) and $t =  (s-1) (d-1)$ (denoted $U$). The region above the upper line $U$ is already zero on the $E^1$-page since the top non-zero homology of $\emb(\underline{s},\RR^d)$ is in degree $(s-1)(d-1)$. A proof of the lower vanishing line can be found in \cite[Corollary 7.7]{sinhatopology}.

By Theorem \ref{thm: homology}, the first possibly non-zero differential is $d_{1 + (d-1)(p-1)}$. Therefore, the longest differential $d_{r}$ possibly hitting the line $t = i + s$ has source on the line $U$ and target on the line $L$. So under the inequality $r < 1 + (d-1)(p-1)$ the spectral sequence collapses along the line $t = i + s$. An easy calculation shows that this inequality is satisfied if 
\[i \leq (d-3)\left[(p-1)(d-2)+1\right].\]
\end{proof}

\bibliographystyle{acm}

\bibliography{biblio}

\end{document}